\begin{document}
\setlength{\baselineskip}{16pt}

\parindent 0.5cm
\evensidemargin 0cm \oddsidemargin 0cm \topmargin 0cm \textheight 22.5cm \textwidth 16cm \footskip 2cm \headsep
0cm

\newtheorem{theorem}{Theorem}[section]
\newtheorem{lemma}{Lemma}[section]
\newtheorem{proposition}{Proposition}[section]
\newtheorem{definition}{Definition}[section]
\newtheorem{example}{Example}[section]
\newtheorem{corollary}{Corollary}[section]

\newtheorem{remark}{Remark}[section]

\numberwithin{equation}{section}

\def\p{\partial}
\def\I{\textit}
\def\R{\mathbb R}
\def\C{\mathbb C}
\def\u{\underline}
\def\l{\lambda}
\def\a{\alpha}
\def\O{\Omega}
\def\e{\epsilon}
\def\ls{\lambda^*}
\def\D{\displaystyle}
\def\wyx{ \frac{w(y,t)}{w(x,t)}}
\def\imp{\Rightarrow}
\def\tE{\tilde E}
\def\tX{\tilde X}
\def\tH{\tilde H}
\def\tu{\tilde u}
\def\d{\mathcal D}
\def\aa{\mathcal A}
\def\DH{\mathcal D(\tH)}
\def\bE{\bar E}
\def\bH{\bar H}
\def\M{\mathcal M}
\renewcommand{\labelenumi}{(\arabic{enumi})}

\def\disp{\displaystyle}
\def\undertex#1{$\underline{\hbox{#1}}$}
\def\card{\mathop{\hbox{card}}}
\def\sgn{\mathop{\hbox{sgn}}}
\def\exp{\mathop{\hbox{exp}}}
\def\OFP{(\Omega,{\cal F},\PP)}
\newcommand\JM{Mierczy\'nski}
\newcommand\RR{\ensuremath{\mathbb{R}}}
\newcommand\CC{\ensuremath{\mathbb{C}}}
\newcommand\QQ{\ensuremath{\mathbb{Q}}}
\newcommand\ZZ{\ensuremath{\mathbb{Z}}}
\newcommand\NN{\ensuremath{\mathbb{N}}}
\newcommand\PP{\ensuremath{\mathbb{P}}}
\newcommand\abs[1]{\ensuremath{\lvert#1\rvert}}

\newcommand\normf[1]{\ensuremath{\lVert#1\rVert_{f}}}
\newcommand\normfRb[1]{\ensuremath{\lVert#1\rVert_{f,R_b}}}
\newcommand\normfRbone[1]{\ensuremath{\lVert#1\rVert_{f, R_{b_1}}}}
\newcommand\normfRbtwo[1]{\ensuremath{\lVert#1\rVert_{f,R_{b_2}}}}
\newcommand\normtwo[1]{\ensuremath{\lVert#1\rVert_{2}}}
\newcommand\norminfty[1]{\ensuremath{\lVert#1\rVert_{\infty}}}
\newcommand{\ds}{\displaystyle}

\title{Chemotaxis systems with  singular sensitivity and logistic source: Boundedness,  persistence, absorbing  set, and  entire solutions}
\author{
Halil Ibrahim Kurt and Wenxian Shen   
\\
Department of Mathematics and Statistics\\
Auburn University\\
Auburn University, AL 36849\\
U.S.A. }

\date{}
\maketitle

\begin{abstract}
This paper deals with the following parabolic-elliptic chemotaxis system with
  singular sensitivity and logistic source,
\begin{equation}
\label{abstract-eq}
\begin{cases}
u_t=\Delta u-\chi\nabla\cdot (\frac{u}{v} \nabla v)+u(a(t,x)-b(t,x) u), & x\in \Omega,\cr
0=\Delta v- \mu v+ \nu u, & x\in \Omega, \cr
\frac{\p u}{\p n}=\frac{\p v}{\p n}=0, & x\in\p\Omega,
\end{cases}
\end{equation}
where $\Omega \subset \mathbb{R}^N$ is a smooth bounded domain,  $a(t,x)$ and $b(t,x)$ are positive smooth functions, and  $\chi$,  $\mu$ and $\nu$ are  positive constants. In recent years, a lot of attention has been drawn to the question of whether logistic kinetics  prevents  finite-time blow-up in various chemotaxis models. In the very recent paper \cite{HKWS}, we proved that for given nonnegative initial function $0\not\equiv u_0\in C^0(\bar \Omega)$ and $s\in\RR$, \eqref{abstract-eq} has a unique globally defined classical solution $(u(t,x;s,u_0),v(t,x;s,u_0))$
with $u(s,x;s,u_0)=u_0(x)$, provided that $a_{\inf}=\inf_{t\in\mathbb{R},x\in\Omega}a(t,x)$ is large relative to $\chi$ and $u_0$ is not small. 

In this paper, we  further investigate qualitative properties of globally defined positive solutions of \eqref{abstract-eq} under the assumption that $a_{\inf}$ is large relative to $\chi$ and $u_0$ is not small. Among others, we provide some concrete estimates for $\int_\Omega u^{-p}$ and $\int_\Omega u^q$ for some $p>0$ and $q>\max\{2,N\}$ and prove that    any globally defined positive solution is bounded  above and below eventually by some positive constants independent of its initial functions. We prove the existence of a ``rectangular'' type bounded  invariant set (in $L^q$) which eventually attracts all the globally defined positive solutions.
We also prove that
 \eqref{abstract-eq} has a positive entire classical solution $(u^*(t,x),v^*(t,x))$, which is periodic in $t$ if  $a(t,x)$ and $b(t,x)$ are periodic in $t$ and is independent of $t$   if $a(t,x)$ and $b(t,x)$ are independent of $t$.
\end{abstract}

\medskip

 \noindent {\bf Key words.} Parabolic-elliptic chemotaxis system,   logistic source,  singular sensitivity, global boundedness,
absorbing  set,  entire positive solution,  pointwise persistence, stationary positive solution, periodic  positive solution.

\vspace{4in}

\section{Introduction and Main Results}
\label{S:intro}

Chemotaxis systems, also known as Keller-Segel  systems, have been
widely studied since the pioneering works   \cite{Keller-0, Keller-00} by Keller and Segel at the beginning of 1970s
on the mathematical modeling of the aggregation process of Dictyostelium discoideum.
The current paper is devoted to the study of the asymptotic dynamics of the following  parabolic-elliptic chemotaxis system,
\begin{equation}
\label{main-eq}
\begin{cases}
u_t=\Delta u-\chi\nabla\cdot (\frac{u}{v} \nabla v)+u(a(t,x)-b(t,x) u),\quad &x\in \Omega,\cr
0=\Delta v-\mu v+\nu u,\quad &x\in \Omega, \quad \cr
\frac{\p u}{\p n}=\frac{\p v}{\p n}=0,\quad &x\in\p\Omega,
\end{cases}
\end{equation}
where  $\Omega\subset\RR^N$ is a smooth bounded domain, $a(t,x)$ and $b(t,x)$ are nonnegative smooth functions, and  $ \chi, \mu$ and $\nu$ are positive constants.
Biologically,  \eqref{main-eq}
 describes the evolution of  a biological process in which cells (with density $u$) move  towards higher concentrations of a chemical substance with density $v$ produced by cells themselves.
In \eqref{main-eq},  the cross-diffusion term  $-\chi\nabla\cdot (\frac{u}{v}\nabla v)$
reflects the chemotactic movement and $\frac{\chi}{v}$ is refereed to as chemotaxis sensitivity;  the reaction term $u(a(t,x)-b(t,x)u)$ represents the cell kinetic mechanism and is referred to as logistic source; $\mu>0$ denotes  the degradation rate of the chemical substance, and $\nu>0$ is the rate
at which the mobile species produces the chemical substance.  It  is seen that  the chemotaxis sensitivity $\frac{\chi}{v}$ is  singular near $v=0$,   reflecting  an inhibition of chemotactic migration at high signal concentrations. Such a sensitivity  was first proposed in \cite{Keller-1} due to the Weber–Fechner law of stimulus perception. The time and space dependence of the logistic source reflects the heterogeneity of   the underlying environment
of the chemotaxis system.

Since the pioneering works of Keller and Segel (\cite{Keller-0, Keller-00, Keller-1}), considerable efforts have been devoted to identifying the effects of the cross-diffusion and the kinetic term on the blow-up or global boundedness of solutions of \eqref{main-eq}.
For example, consider  \eqref{main-eq} without logistic source (i.e. $a(x,t) = b(x,t) \equiv 0$) and $\mu=\nu=1$. When $\Omega$ being a ball, it is shown in \cite{NaSe} that the classical radially symmetric positive solutions are global and bounded when $\chi>0$ and $N=2$,  or $\chi < \frac{2}{N-2}$ and $N \ge3$, and  there exist radial blow-up solutions if $\chi>\frac{2N}{N-2}$ and $N\ge 3$. Without the requirement for symmetry, Biler in \cite{Bil} proved the global existence of positive  solutions when $\chi\le 1$ and $N=2$, or $\chi<\frac{2}{N}$ and $N\ge 2$. Fujie,  Winkler, and Yokota in \cite{FuWiYo} proved the boundedness of globally defined
positive  solutions  when $\chi<\frac{2}{N}$ and $N\ge 2$.  More recently, Fujie and Senba in \cite{FuSe1} proved the global existence and boundedness of
classical positive solutions for the case of $N=2$ for any $\chi>0$. The existence of finite-time blow-up is then completely ruled out for any $\chi>0$  in the case $N=2$. In \cite{Bla}, global existence of weak solutions is proved if $0<\chi<\frac{N}{N-2}$.

Consider  \eqref{main-eq} with $a(t,x),b(t,x)>0$.
Central questions include  whether the logistic source prevents  the occurrence  of    finite-time blow-up
in \eqref{main-eq} (i.e. any positive solution exists globally);  if so,
whether the logistic source prevents  the occurrence  of    infinite-time blow-up
in \eqref{main-eq} (i.e.  any globally defined positive solution is bounded), and
what is the long time behavior of globally defined bounded positive solutions, etc.
To recall the existing  results  related to these central questions,   we first make the following standing assumption on $a(t,x)$ and $b(t,x)$:

\medskip

\noindent {\bf (H)} {\it $a(t,x)$ and $b(t,x)$ are  continuous in $x\in\bar\Omega$ uniformly
with respect to $t\in\RR$,
\text{ H\"older} continuous in $t\in\RR $ with exponent $0<\gamma_0<1$ uniformly with respect to $x\in\bar\Omega$, i.e, there is $B_1>0$ such that
$$
|a(t,x)-a(s,x)|\le B_1 |t-s|^{\gamma_0}, \quad |b(t,x)-b(s,x)|\le B_1 |t-s|^{\gamma_0}\quad \forall \, t,s\in\RR,\,\, x\in\bar\Omega,
$$
 and there are positive constants $\alpha$, $B_2$  such that }
$$ \alpha  \le a(t,x)\le B_2,\quad \alpha \le  b(t,x) \leq B_2.
$$
Put
\begin{equation*}
 \label{a-i-sup-inf-eq1}
a_{\inf}=\inf_{x\in\bar\Omega, t\in\RR}a(t,x),\,\, b_{\inf}=\inf _{ x \in\bar{\Omega},t\in\R} b(t,x),\,\, a_{\sup} =\sup_{x\in\bar\Omega,t\in\RR} a(t,x),\,\,  b_{\sup}=\sup _{x \in\bar{\Omega},t\in\R}b(t,x).
\end{equation*}
We consider the classical solutions $(u(t,x),v(t,x))$ of \eqref{main-eq} with initial functions $u_0(x)$
  satisfying
\begin{equation}
\label{initial-cond-eq}
u_0 \in C^0(\bar{\Omega}), \quad u_0 \ge 0, \quad {\rm and} \quad \int_\Omega u_0 >0.
\end{equation}

\begin{definition}
\label{solu-def}
For given $s\in\mathbb{R}$ and $u_0$ satisfying \eqref{initial-cond-eq}, we say $(u(t,x),v(t,x))$ is a {\rm classical solution} of \eqref{main-eq} on $(s,s+T)$ for some $T\in (0,\infty]$ with initial condition  $u(s,x)=u_0(x)$ if
\begin{equation*}
u(\cdot,\cdot)\in  C([s,s+T)\times\bar\Omega )\cap C^{1,2}(  (s,s+T)\times\bar\Omega),\quad
v(\cdot,\cdot)\in C^{0,2}((s,s+T)\times \bar\Omega),
\end{equation*}
\begin{equation}
\label{local-2-eq0}
\lim_{t\to s+}\|u(t,\cdot;s,u_0)-u_0(\cdot)\|_{C^0(\bar\Omega)}=0,
\end{equation}
and $(u(t,x),v(t,x))$ satisfies \eqref{main-eq} for all $(t,x)\in (s,s+T)\times \Omega$.
\end{definition}

Sometime,  we may assume
\begin{equation}
\label{main-assumption}
    a_{\inf}>
\begin{cases}
\frac{\mu \chi^2}{4}, &\text{if $0< \chi \leq 2,$}\\
\mu(\chi-1), &\text{if $\chi>2$.}\\
\end{cases}
\end{equation}

The  following proposition is on the existence and uniqueness
 of the classical solutions of \eqref{main-eq} with given initial function $u_0$ satisfying \eqref{initial-cond-eq} and follows from
 the arguments in \cite[Lemma 2.2]{FuWiYo1}.

\begin{proposition}
\label{local-existence-prop}(Local existence) {For any $s\in\mathbb{R}$ and $u_0$ satisfying \eqref{initial-cond-eq},  there is $T_{\max}(s,u_0)\in (0,\infty]$
such that the system \eqref{main-eq} possesses
 a unique  classical solution, denoted by $(u(t,x;s,u_0)$, $v(t,x;s,u_0))$,  on $(s,T_{\max}(s,u_0))$  with  initial condition $u(s,x;s,u_0)=u_0(x)$.
Furthermore,  if $T_{\max}(s,u_0)< \infty,$ then}
\begin{equation*}
\label{finite-time-blow-up}
\limsup_{t \nearrow T_{\max}(s,u_0)} \left\| u(t,\cdot;s,u_0) \right\|_{C^0(\bar \Omega)}  =\infty \quad or \quad \liminf_{t \nearrow T_{\max}(s,u_0)} \inf_{x \in \Omega} v(t,\cdot;s,u_0)   =0.
\end{equation*}
\end{proposition}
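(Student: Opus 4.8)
The plan is to establish local existence and uniqueness by a fixed-point argument on a short time interval, and then obtain the blow-up alternative from the standard extensibility criterion for classical solutions of the parabolic equation for $u$ once the elliptic equation for $v$ is solved in terms of $u$. First I would exploit the parabolic-elliptic structure: for each fixed $t$, the second equation $0=\Delta v-\mu v+\nu u$ with Neumann boundary conditions has, for $u(t,\cdot)\in C^0(\bar\Omega)$ nonnegative and not identically zero, a unique positive classical solution $v(t,\cdot)=\nu(\mu-\Delta)^{-1}u(t,\cdot)$; by elliptic regularity and the maximum principle this $v$ is strictly positive on $\bar\Omega$, with $\inf_{x\in\Omega}v(t,x)>0$ controlled from below in terms of $\int_\Omega u(t,\cdot)$ (via, e.g., the Green's function of $\mu-\Delta$). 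Substituting this representation into the first equation turns \eqref{main-eq} into a single quasilinear parabolic equation for $u$ with a nonlocal lower-order term, to which the abstract theory of sectorial operators (the Neumann Laplacian generates an analytic semigroup on $C^0(\bar\Omega)$, or on $L^p$) applies.

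Next I would set up the integral (mild-solution) formulation $u(t)=e^{(t-s)\Delta}u_0-\chi\int_s^t e^{(t-\tau)\Delta}\nabla\cdot\big(\tfrac{u}{v}\nabla v\big)(\tau)\,d\tau+\int_s^t e^{(t-\tau)\Delta}\big(u(a-bu)\big)(\tau)\,d\tau$ in a suitable space such as $C([s,s+T];C^0(\bar\Omega))$ intersected with a space controlling $\nabla u$ (e.g. using the smoothing estimates $\|e^{t\Delta}\nabla\cdot w\|_{L^\infty}\le C t^{-1/2-N/(2q)}\|w\|_{L^q}$). Because $\tfrac{1}{v}$ and $\nabla v$ are bounded and Lipschitz in $u$ on a ball around $u_0$ as long as $\inf v$ stays positive (which holds on a short interval since $\int_\Omega u$ varies continuously and $u_0\not\equiv 0$), the right-hand side is a contraction on a small time interval, yielding a unique local classical solution; parabolic Schauder/bootstrap arguments upgrade regularity to $C^{1,2}((s,s+T)\times\bar\Omega)$ and $v\in C^{0,2}$. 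This is exactly the scheme in \cite[Lemma 2.2]{FuWiYo1}, so I would simply cite it and indicate the adaptation needed for the time-dependent coefficients $a,b$ satisfying (H).

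Finally, for the blow-up alternative, I would argue by contradiction: suppose $T_{\max}=:T<\infty$ but both $\limsup_{t\nearrow T}\|u(t,\cdot)\|_{C^0(\bar\Omega)}<\infty$ and $\liminf_{t\nearrow T}\inf_{x}v(t,x)>0$. Boundedness of $u$ in $L^\infty$ gives, through the elliptic representation and Schauder estimates, a uniform bound on $v$ in $C^{2+\beta}$ and hence on $\tfrac{1}{v}\nabla v$ up to time $T$; then the parabolic equation for $u$ has uniformly bounded (in appropriate Hölder norms) coefficients, so parabolic regularity yields a uniform bound on $\|u(t,\cdot)\|_{C^{\beta}(\bar\Omega)}$ for $t$ near $T$, allowing the solution to be continued past $T$ by the local existence result applied at a time close to $T$ — contradicting maximality. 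The main obstacle, and the only genuinely delicate point, is keeping $\inf_x v(t,x)$ bounded away from $0$ (equivalently controlling $1/v$) during the continuation argument, since the singular sensitivity means all estimates degenerate if $v$ touches zero; this is precisely why the dichotomy in the blow-up alternative must include the alternative $\liminf_{t\nearrow T_{\max}}\inf_{x\in\Omega}v=0$, and handling it correctly is what the cited argument in \cite{FuWiYo1} supplies.
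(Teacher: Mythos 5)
Your proposal is correct and follows essentially the same route as the paper, which (in the paragraph introducing Proposition \ref{local-existence-prop}) simply states that the result follows from the arguments of \cite[Lemma 2.2]{FuWiYo1} without giving further detail. Your sketch of the elliptic inversion for $v$, the contraction-mapping/mild-solution setup for $u$, and the continuation argument for the blow-up dichotomy is a faithful expansion of exactly that cited scheme, with the correct identification of the only delicate point (controlling $\inf_x v$ away from zero) that forces the second alternative in the blow-up criterion.
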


Note that, by the assumption {\bf (H)}, it is not difficult to prove that $\inf_{s\in \mathbb{R}} T_{\max}(s,u_0)>0$. 

We say that {\it finite-time blow-up} occurs in \eqref{main-eq} if for some $s\in\RR$ and $u_0$ satisfying \eqref{initial-cond-eq}, $T_{\max}(s,u_0)<\infty$, and {\it infinite-time blow-up} occurs if for some $s\in\RR$ and $u_0$ satisfying \eqref{initial-cond-eq}, $T_{\max}(s,u_0)=\infty$ and
\begin{equation*}
\label{infinite-time-blow-up}
\limsup_{t \to\infty} \left\| u(t,\cdot;s,u_0) \right\|_{C^0(\bar \Omega)}  =\infty \quad or \quad \liminf_{t \to\infty} \inf_{x \in \Omega} v(t,\cdot;s,u_0)   =0.
\end{equation*}

Throughout the rest of this section, we assume that $u_0(x)$ satisfies \eqref{initial-cond-eq} and
$(u(t,x),v(t,x)):=(u(t,x;s,u_0),v(t,x;s,u_0))$ is the unique classical solution of \eqref{main-eq} on  $(s,T_{\max}(s,u_0))$  with the initial
condition $u(s,x;s,u_0)=u_0(x)$.
We now  recall   some existing results related to the central questions mentioned in the above.

When $N=2$ and $a(t,x)\equiv a$, $b(t,x)\equiv b$ are positive constants, the following have been proved:

\begin{itemize}

\item[(a)] {\it For any $u_0$ satisfying \eqref{initial-cond-eq} and $s\in\RR$,  $T_{\max}(s, u_0)=\infty$, that is, finite-time blow-up does not occur}  (see \cite[Theorem 1.1]{FuWiYo1}).

\item[(b)] {\it  If \eqref{main-assumption} holds, then  for any $u_0$ satisfying \eqref{initial-cond-eq} and $s\in\RR$,
 $\sup_{t\ge s}\|u(t,\cdot;s,u_0)\|_\infty<\infty$,  that is, infinite-time blow-up does not occur under the assumption \eqref{main-assumption}}
(\cite[Theorem 1.2]{FuWiYo1}).

\item[(c)] {\it  The constant solution
$(\frac{a}{b},\frac{\nu}{\mu}\frac{a}{b})$  is exponentially stable under the assumption \eqref{main-assumption} and some other assumptions}
(see \cite[Theorem 1]{CaWaYu}).
\end{itemize}

The authors of the current paper studied the global existence of  classical solutions of \eqref{main-eq} for  general $N\ge 1$ and time and space dependent functions $a(t,x)$, $b(t,x)$ in \cite{HKWS}. To recall some results proved in \cite{HKWS}, we first state some additional  conditions on  initial data $u_0$ and on the coefficients in \eqref{main-eq}.
Here is an additional condition on the initial function $u_0$:
$$
u_0\,\, \text{satisfies}\,\, \eqref{initial-cond-eq}\,\,\, {\rm and}\,\, \,
 \exists\, \tau_0>0\,\, \,{\rm s.t.}\,\,\, \int_\Omega u^{-1}(s+\tau_0,x;s,u_0)\le \frac{b_{\sup}|\Omega|\max\{1,\frac{1}{\chi}\}}{a_{\inf}-a_{\chi,\mu}}\,\,\, \forall\, s\in\mathbb{R}.\eqno(1.2)'
$$
By \cite[Proposition 1.3]{HKWS} (see also Lemma \ref{main-lem2}),
for any $q \ge  3$ and $q-1 \leq k <  2q-2,$ there exist positive constants $M(k,q)>0$ and $M^{*}(k,q)>0$ such that   for any given $s\in\mathbb{R}$ and  $u_0$ satisfying \eqref{initial-cond-eq},
\begin{equation*}
   \int_{\Omega} \frac{|\nabla v(x,t;s,u_0)|^{2q}}{v^{k+1}(x,t;s,u_0)} \leq M(k,q) \int_{\Omega} \frac{u^q(x,t;s,u_0)}{v^{k-q+1}(x,t;s,u_0)}+M^{*}(k,q)\int_{\Omega} v^{2q-k-1}(x,t;s,u_0)
\end{equation*}
for all  $t \in (s, T_{\rm max}(s,u_0))$. The following is an additional condition on the parameters:
$$
a_{\inf}> a_{\chi,\mu}+ \frac{b_{\sup}|\Omega| (p_N-1) \big(C_n^*\big)^{\frac{1}{p_N+1}}\max\{\chi,\chi^2\}}{4b_{\inf} \delta_0},\eqno(1.5)'
$$
where  $\delta_0$ is as in Lemma \ref{prelim-lm-00},  $p_N=\max\{2,N\}$,  $C_N^*=M(p_N,p_N+1)$,  and
\begin{equation*}
   a_{\chi,\mu}:=2\big(\chi+2-2\sqrt{\chi+1}\big)\mu.
\end{equation*}

Note that if $\int_\Omega u_0^{-1}(x)dx\le \frac{b_{\sup}|\Omega|\max\{1,\frac{1}{\chi}\}}{a_{\inf}-a_{\chi,\mu}}$, then $(1.2)'$ holds.
The condition $(1.2)'$ indicates that $u_0$ is not  small, which prevents $v$ becomes too small as time evolutes and is a natural assumption. Note  also that $a_{\chi,\mu}$ in $(1.5)'$ satisfies
\begin{equation*}
    a_{\chi,\mu}\le \begin{cases} \frac{\mu \chi^2}{2},\quad &{\rm if}\,\,\,\,  0<\chi\le 2\cr
2\mu(\chi-1), \quad &{\rm if}\,\,\, \,  \chi>2.
\end{cases}
\end{equation*}
 The condition $(1.5)'$ indicates that $a(\cdot,\cdot)$ is large relative to the chemotaxis sensitivity coefficient $\chi$, which also prevents $v$ becomes too small as time evolutes, and is a natural condition. Assuming that $(1.2)'$ and $(1.5)'$ hold, among others, the following  are proved in the recent paper \cite{HKWS}:

\begin{itemize}
\item[(i)] (Global existence)  {\it  For any $s\in\RR$},
 \begin{equation}
\label{paper1-eq1}
T_{\max}(s,u_0)=\infty
\end{equation}
 (see \cite[Theorem 1.2(3)]{HKWS}).

\item[(ii)] (Boundedness of $\int_\Omega u^q$) {\it There is $q>N$ such that for any  $s\in\RR$,}
\begin{equation}
\label{paper1-eq3}
\sup_{t\ge s}\int_\Omega u^q(t,x;s,u_0)dx<\infty
\end{equation}
(see \cite[Theorem 1.1(3)]{HKWS}).

\item[(iii)] (Boundedness of $\int_\Omega u^{-p}$)  {\it There is $p>0$ such that for any  $s\in\RR$}, 
\begin{equation}
\label{paper1-eq4}
\limsup_{t\to\infty}\int_\Omega u^{-p}(t,x;s,u_0)dx<\infty
\end{equation}
(see \cite[Lemma 3.4]{HKWS}).

\item[(iv)] (Mass persistence) {\it For any  $s\in\RR$},
 \begin{equation}
\label{paper1-eq5}
\inf_{t\ge s}\int_\Omega u(t,x;s,u_0)dx>0,\quad \inf_{t\ge s,x\in\Omega} v(t,x;s,u_0)>0
\end{equation}
(see \cite[Proposition 1.2(2)]{HKWS}).

\item[(v)] (Boundedness of $\|u\|_\infty$) {\it For any   $s\in\RR$,}
\begin{equation}
\label{paper1-eq2}
\sup_{t\ge s}\|u(t,\cdot;s,u_0)\|_\infty<\infty
\end{equation}
 (see \cite[Theorem 1.2(3)]{HKWS}).
\end{itemize}

We remark  that \eqref{paper1-eq2} implies \eqref{paper1-eq3}. But \eqref{paper1-eq2} is proved in \cite{HKWS} by using \eqref{paper1-eq3} and \eqref{paper1-eq5},  and \eqref{paper1-eq5} is proved in \cite{HKWS} by using \eqref{paper1-eq4}.
Hence the boundedness of $\int_\Omega u^q$ in \eqref{paper1-eq3} and the boundedness
of $\int_\Omega u^{-p}$ in \eqref{paper1-eq4} play essential roles
in the proofs of the main results in \cite{HKWS}.

The results (a)-(c) and (i)-(v) provide some answers to the central questions mentioned in the above. For example, results (i)-(v) imply that logistic
kinetics prevents the occurrence of  finite-time blow-up provided that the intrinsic growth rate function $a(t,x)$ is large relative to the chemotaxis sensitivity and the initial condition
$u_0$ is not small, which is an interesting biological phenomenon.
However, there are still many interesting questions associated to those central questions. For example,
 whether the ultimate upper bound of $\int_\Omega u^q$, $\int_\Omega u^{-p}$,  and $\|u\|_\infty$ in (ii), (iii), and
(v), respectively, are independent of the initial functions; whether the {\it  mass persistence} in (iv) (i.e.
$\int_{t\ge s}\int_\Omega u(t,x;s,u_0)dx>0$) can be replaced by the {\it pointwise persistence}  (i.e.
$\liminf_{t-s\to\infty}\inf_{x\in\Omega} u(t,x;s,u_0)>0$);  whether \eqref{main-eq} has {\it bounded positive  entire solutions} (i.e. bounded positive solutions $(u(t,x),v(t,x))$ which are defined for all  $t\in\mathbb{R}$); whether (i)-(v) hold without the assumptions $(1.2)'$ and $(1.5)'$.

In the current paper, we will further investigate those central questions mentioned in the above.
We state our main results in next subsection.

\subsection{Main results and remarks}

Assume that $(1.5)'$ holds.
For any $s\in\mathbb{R}$, $\tau\ge s$, $u_0$ satisfying $(1.2)'$, and $p>0$,  let
\begin{equation*}
 m^*(\tau,s,u_0)= {\rm max}\Big\{\int_{\Omega} u(\tau,x;s,u_0)dx,  \frac{a_{\sup}}{b_{\inf}}|\Omega| \Big\}
\end{equation*}
and
\begin{equation*}
\tilde M_1(p,\tau,s,u_0)= p b_{\sup} |1-p| \Big( m^*(\tau,s,u_0)-\frac{a_{\sup}}{b_{\inf}}|\Omega|\Big).
\end{equation*}
Note that
$$
u(t,x;s,u_0), v(t,x;s,u_0)>0\quad \forall\, x\in\Omega,\,\, t\in (s,\infty).
$$
A {\it positive entire solution} of \eqref{main-eq} is a solution $(u(t,x),v(t,x))$ which is defined for all $t\in\mathbb{R}$ and
$\inf_{t\in\mathbb{R},x\in\Omega}u(t,x)>0$.

The first main result of the current paper  is on   the  boundedness of
$\int_\Omega u^{-p}$ and  $\int_\Omega u^q$ of the globally defined classical solutions of \eqref{main-eq}.

\begin{theorem} [Boundedness   of
$\int_\Omega u^{-p}$ and $\int_\Omega u^q$]
\label{main-thm1}
Assume that $(1.5)'$  holds.  Then the following hold.
\begin{itemize}
\item[(1)]  Let $p=1$, $\gamma=a_{\inf}-a_{\chi,\mu}$,  $M_1(p)=\frac{b_{\sup}|\Omega|}{a_{\inf}-a_{\chi,\mu}}$. Then for any $u_0$ satisfying $(1.2)'$, $s\in \mathbb{R}$ and  $\tau>s$,
\end{itemize}
\begin{align*}
    \int_\Omega u^{-p}(t,x;s,u_0)dx\le e^{-\gamma (t-\tau)} \int_\Omega u^{-p}(\tau,x;s,u_0)dx  + M_1(p)+\tilde M_1(p,u_0,\tau)\quad \forall\, t \ge \tau,
\end{align*}
and
\begin{equation}
\label{revised-negative-p-eq1}
\int_\Omega u^{-p}(t,x;s,u_0)dx\le \max\left\{\int_\Omega u^{-p}(\tau,x;s,u_0)dx, M_1(p)+\tilde M_1(p,\tau,s,u_0)\right\}\,\,\,\forall\,\, t\ge \tau,
\end{equation}
as well as
\begin{equation}
\label{revised-negative-p-eq2}
\lim_{t-s\to\infty} \int_\Omega u^{-p}(t,x;s,u_0)\le M_1(p).
\end{equation}

\smallskip

\begin{itemize}
\item[(2)] Let $p>0$ be as in (1). There are $q>\max\{2,N\}$ and  $M_2(p,q)>0$  such that for any $s\in\mathbb{R}$, $u_0$ satisfying $(1.2)'$, and $\tau>s$, there is $\tilde M_2(p,q,\tau,s,u_0)$ such that
\end{itemize}
\begin{equation*}
 \int_\Omega u^q(t,x;s,u_0)dx\le  e^{-(t-\tau)}\int_\Omega u^q(\tau,x;s,u_0)dx +M_2(p,q)+\tilde M_2(p,q,\tau,s,u_0)\quad \forall\, t\ge \tau,
\end{equation*}
and
\begin{equation*}
 \int_\Omega u^q(t,x;s,u_0)dx\le  \max\left\{\int_\Omega u^q(\tau,x;s,u_0)dx, M_2(p,q)+\tilde M_2(p,q,\tau,s,u_0)\right\}\quad \forall\, t\ge \tau,
\end{equation*}
as well as
\begin{equation*}
\limsup_{t-s\to\infty}\int_\Omega u^q(t,x;s,u_0)ds\le M_2(p,q).
\end{equation*}
\end{theorem}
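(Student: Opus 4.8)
\emph{Part (1).} Under $(1.2)'$ and $(1.5)'$ the solution is global by (i), and for $t>s$ it is smooth and strictly positive on $\bar\Omega$; hence $y(t):=\int_\Omega u^{-1}(t,x;s,u_0)\,dx$ is a finite $C^1$ function on $(s,\infty)$ (finiteness at $t=\tau>s$ uses $\inf_{\bar\Omega}u(\tau,\cdot)>0$). The plan is to establish the linear differential inequality
\begin{equation*}
y'(t)\le -\gamma\,y(t)+b_{\sup}|\Omega|,\qquad \gamma=a_{\inf}-a_{\chi,\mu}>0 ,
\end{equation*}
from which Gronwall's inequality yields the first estimate with $M_1(1)=b_{\sup}|\Omega|/\gamma$ and $\tilde M_1(1,\cdot)\equiv 0$; the observation that $y$ is strictly decreasing whenever $y>M_1(1)$ gives \eqref{revised-negative-p-eq1}; and letting $t-\tau\to\infty$ gives \eqref{revised-negative-p-eq2}. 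To obtain the inequality I would first compute, using $(u^{-1})_t=-u^{-2}u_t$, the $u$-equation of \eqref{main-eq} and integration by parts under the Neumann condition,
\begin{equation*}
y'(t)=-2\int_\Omega u^{-3}|\nabla u|^2+2\chi\int_\Omega u^{-2}\,\frac{\nabla u\cdot\nabla v}{v}-\int_\Omega a\,u^{-1}+\int_\Omega b ,
\end{equation*}
and separately derive, from $\Delta v=\mu v-\nu u$ and $\Delta(\ln v)=\frac{\Delta v}{v}-\frac{|\nabla v|^2}{v^2}$ (so $\frac{|\nabla v|^2}{v^2}=\mu-\nu\frac uv-\Delta(\ln v)$; multiply by $u^{-1}$, integrate, and note the Neumann boundary term drops) the key identity
\begin{equation*}
\int_\Omega\frac{|\nabla v|^2}{u\,v^2}=\mu\int_\Omega u^{-1}-\nu\int_\Omega v^{-1}-\int_\Omega u^{-2}\,\frac{\nabla u\cdot\nabla v}{v}.
\end{equation*}
The heart of the matter is then to substitute this identity into a fraction $\lambda\in(0,1]$ of the chemotactic term $2\chi\int_\Omega u^{-2}\frac{\nabla u\cdot\nabla v}{v}$ and absorb the leftover $2\chi(1-\lambda)\int_\Omega u^{-2}\frac{\nabla u\cdot\nabla v}{v}$ by Young's inequality against the two now-available nonpositive terms $-2\int_\Omega u^{-3}|\nabla u|^2$ and $-2\chi\lambda\int_\Omega\frac{|\nabla v|^2}{uv^2}$. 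This absorption is possible exactly when $\chi(1-\lambda)^2\le 4\lambda$; at the smallest admissible $\lambda_*=\frac{\chi+2-2\sqrt{\chi+1}}{\chi}\in(0,1]$ (the smaller root of $\chi(1-\lambda)^2=4\lambda$) the cancellation is exact and the surviving zeroth-order coefficient is $2\chi\mu\lambda_*=2\mu(\chi+2-2\sqrt{\chi+1})=a_{\chi,\mu}$. Dropping $-2\chi\lambda_*\nu\int_\Omega v^{-1}\le 0$ and using $\int_\Omega b\le b_{\sup}|\Omega|$ completes the argument.

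\emph{Part (2).} I would run the analogous computation for $z(t):=\int_\Omega u^q(t,x;s,u_0)\,dx$:
\begin{equation*}
z'(t)=-q(q-1)\int_\Omega u^{q-2}|\nabla u|^2+q\chi(q-1)\int_\Omega u^{q-1}\frac{\nabla u\cdot\nabla v}{v}+q\int_\Omega a\,u^q-q\int_\Omega b\,u^{q+1},
\end{equation*}
where Young's inequality (with weight $\chi$ or $1$ according as $\chi\le1$ or $\chi\ge1$) absorbs the cross term into $-q(q-1)\int_\Omega u^{q-2}|\nabla u|^2$ at the cost of $\tfrac{q(q-1)}{4}\max\{\chi,\chi^2\}\int_\Omega u^q\frac{|\nabla v|^2}{v^2}$. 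The essential step is to estimate this last integral: combining a Hölder split with the gradient estimate stated before the theorem (cf.\ Lemma~\ref{main-lem2}, applied so that $C_N^*=M(p_N,p_N+1)$ and $(C_N^*)^{1/(p_N+1)}$ enter), the pointwise lower bound $\inf v\ge\delta_0(\cdot)$ of Lemma~\ref{prelim-lm-00} to remove negative powers of $v$, and the bound on $\int_\Omega u^{-1}$ from Part (1) — combined with Cauchy--Schwarz $|\Omega|^2\le\int_\Omega u\cdot\int_\Omega u^{-1}$, which forces an eventually $u_0$-independent lower bound on the mass, hence on $\inf v$ — one gets $\int_\Omega u^q\frac{|\nabla v|^2}{v^2}\le\theta\int_\Omega u^{q+1}+(\text{lower order})$. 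Condition $(1.5)'$ is precisely the inequality making $\tfrac{q(q-1)}{4}\max\{\chi,\chi^2\}\,\theta<qb_{\inf}$, so that term is absorbed into $-qb_{\inf}\int_\Omega u^{q+1}$; what remains is $z'\le -c_1\int_\Omega u^{q+1}+c_2 z+c_3(t)$, and applying $\int_\Omega u^{q+1}\ge|\Omega|^{-1/q}z^{(q+1)/q}$ with the elementary bound $c_1 x^{(q+1)/q}\ge x-c$ converts the superlinear dissipation into $-z$ plus a constant, giving $z'\le -z+M_2(p,q)+\tilde M_2(p,q,\tau,s,u_0)$; the three assertions then follow exactly as in Part (1). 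The Hölder bookkeeping pins $q$ near $p_N=\max\{2,N\}$, so to reach an exponent $q>\max\{2,N\}$ one first controls $\int_\Omega u^{p_N}$ and then bootstraps: elliptic regularity bounds $\nabla v$ in every $L^r$, after which any larger exponent is admissible.

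\emph{Main difficulty.} The real obstacle is the essential estimate in Part (2): organizing the Hölder/Young chain so that it matches the hypotheses of the gradient estimate while keeping $q>\max\{2,N\}$ reachable, feeding in the lower bound on $v$ so that the eventual constant $M_2$ is genuinely independent of the initial datum, and tracking all constants so that the absorption into the logistic term is governed precisely by the threshold $(1.5)'$. Part (1), by contrast, is short once the $\ln v$-identity and the optimal weight $\lambda_*$ are found; it is exactly that identity which lets Part (1) reach the sharp constant $a_{\chi,\mu}$, whereas the cruder Young step in Part (2) only produces the $\max\{\chi,\chi^2\}$-type threshold.
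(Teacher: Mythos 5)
Your proposal is correct and takes essentially the same route as the paper: your Part (1) reconstructs precisely the content of Lemma \ref{main-lem4} (the weighted split of \cite[Lemma 3.3]{HKWS}, your $\lambda_*$ corresponding to $\beta=\chi\lambda_*=\chi+2-2\sqrt{\chi+1}$ and yielding the sharp constant $a_{\chi,\mu}$) followed by ODE comparison, and your Part (2) follows the paper's scheme of Young absorption of the cross term, the gradient estimate of Lemma \ref{main-lem2}, the lower bounds on the mass and on $v$ inherited from Part (1) and Lemma \ref{prelim-lm-00}, absorption into $-b_{\inf}\int_\Omega u^{q+1}$ governed exactly by $(1.5)'$, and the superlinear-to-linear dissipation step. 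The only minor deviation is how $q>\max\{2,N\}$ is reached: the paper simply takes $q$ slightly above $p_N$ using the strict inequality in $(1.5)'$ and continuity of the constants in $q$, whereas you suggest an elliptic-regularity bootstrap --- both are fine.
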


\smallskip

\begin{remark}
\label{rmrk-1}
\begin{itemize}

\item[(1)]  Theorem \ref{main-thm1}(1)  improves boundedness result of $\int_\Omega u^{-p}(t,x;s,u_0)dx$ in   \cite[Lemma 3.4]{HKWS} (see (iii) in the above)  in the following two aspects. First, it provides some  concrete estimates for $\int_\Omega u^{-p}(t,x;s,u_0)dx$. Second, it provides an ultimate upper bound  independent of $u_0$
for $\int_\Omega u^{-p} (t,x;s,u_0)dx$.

\item[(2)]
Theorem \ref{main-thm1}(2)  improves the boundedness  result of $\int_\Omega u^q(t,x;s,u_0)dx$ in    \cite[Theorem 1.1(2)]{HKWS} (see (ii) in the above)  in the following two aspects. First, it provides some concrete estimates for $\int_\Omega u^{q}(t,x;s,u_0)dx$. Second, it provides an ultimate upper bound  independent of $u_0$
for $\int_\Omega u^{q} (t,x;s,u_0)dx$.

\item[(3)] The inequality \eqref{revised-negative-p-eq2} provides some improvement of  \eqref{paper1-eq5} proved in \cite{HKWS}. To be more precise, let $p$ be as in Theorem \ref{main-thm1}(1). By the H\"older's  inequality, for any $s\in\RR$ and $u_0$ satisfying $(1.2)'$, we have
\begin{align*}
|\Omega|&=\int_\Omega u^{\frac{p}{p+1}}(t,x;s,u_0) u^{-\frac{p}{p+1}}(t,x;s,u_0)dx\nonumber\\
&\le \left(\int_\Omega  u(t,x;s,u_0)dx\right)^{\frac{p}{p+1}} \left(\int_\Omega u^{-p}(t,x;s,u_0)dx \right)^{\frac{1}{p+1}}\quad \forall\, t>s.
\end{align*}
Then by \eqref{revised-negative-p-eq2},
\begin{equation}
\label{new-set-eq4}
\lim_{t-s\to\infty}\int_\Omega u(t,x;s,u_0)dx \ge \frac{|\Omega|^{\frac{p+1}{p}}}{(2M_1(p))^{\frac{1}{p}}},
\end{equation}
which provides an ultimate lower bound independent of $u_0$ for $\int_\Omega u(t,x;s,u_0)dx$ and hence  improves \eqref{paper1-eq5} proved in \cite{HKWS}.

\item[(4)]  The upper bounds of $\int_\Omega u^{-p}(t,x;s,u_0)dx$ and
$\int_\Omega u^q(t,x;s,u_0)dx$ obtained in Theorem \ref{main-thm1} provide some useful  tool for the study of the asymptotic behavior of globally defined positive solutions of \eqref{main-eq} (see Theorems \ref{main-thm2}-\ref{main-thm4} in the following).
\end{itemize}
\end{remark}

The second main result of this paper is on the existence of globally absorbing  sets, which provides some insight on the asymptotic behavior of globally defined positive solutions of \eqref{main-eq}.

\begin{theorem} [Globally absorbing rectangle]
\label{main-thm2}
Assume that $(1.5)'$ holds.  Let $q>\max\{2,N\}$ and $p>0$ be as in Theorem \ref{main-thm1}. There are $M_0^*>0$, $M_1^*,M_2^*>0$ such that the following hold.
\begin{itemize}
\item[(1)] The set
\begin{equation}
\label{set-E-eq}
\mathcal{E}=\left\{u\in C^0(\bar \Omega)\,|\, u\ge 0,\,\, { \int_\Omega u(x)dx\le M_0^*},  \int_\Omega u^{-p}(x)dx\le M_1^*,\,\, \int_\Omega u^{q}(x)dx\le M_2^*\right\}
\end{equation}
is an  invariant set of \eqref{main-eq} in the sense that  for any $u_0\in\mathcal{E}$ and $s\in\mathbb{R}$,
$u_0$ satisfies $(1.2)'$ and
$u(t,\cdot;s,u_0)\in\mathcal{E}$ for all $t\ge s$. Moreover,  for any $0<\theta<1-\frac{2N}{q}$ and $\tau>0$, there is $M_3^*(\theta,\tau)>0$ such that for any $u_0\in\mathcal{E}$ and $s\in\RR$,
\begin{equation}
\label{new-set-eq1}
\|u(t,\cdot;s,u_0)\|_{C^\theta(\bar\Omega)}\le M_3^*(\theta,\tau)\quad \forall\, t\ge s+\tau.
\end{equation}

\item[(2)] The set $\mathcal{E}$  is globally absorbing in the sense that  for any $u_0$  satisfying  $(1.2)'$  and $s\in\mathbb{R}$,
\begin{equation}
\label{new-set-eq2}
\begin{cases}
\displaystyle \limsup_{t-s\to\infty}\int_\Omega u(t,x;s,u_0)\le M_0^*\cr
\displaystyle \limsup_{t-s\to\infty} \int_\Omega u^{-p}(t,x;s,u_0)dx\le M_1^*\cr
\displaystyle \limsup_{t-s\to\infty}\int_\Omega u^{q} (t,x;s,u_0)dx\le M_2^*.
\end{cases}
\end{equation}
Moreover,   for any $0<\theta<1-\frac{2N}{q}$, there is $M_4^*(\theta)>0$ such that for any $u_0$ satisfying $(1.2)'$, there is $T(u_0)>0$ such that
\begin{equation}
\label{new-set-eq3}
\|u(t,\cdot;s,u_0)\|_{C^\theta(\bar\Omega)}\le M_4^*(\theta)\quad \forall\, s\in\RR,\,\,  t\ge s +T(u_0).
\end{equation}
\end{itemize}
\end{theorem}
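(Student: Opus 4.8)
The plan is to build the constants $M_0^*, M_1^*, M_2^*$ directly from the explicit ultimate bounds in Theorem \ref{main-thm1} together with the mass estimate, and then verify invariance and the absorbing property separately. First I would record that, under $(1.5)'$, any $u_0$ satisfying $(1.2)'$ yields a globally defined positive solution (item (i) in the excerpt), and that the ODE comparison $\frac{d}{dt}\int_\Omega u \le \int_\Omega u (a_{\sup} - \frac{b_{\inf}}{|\Omega|}\int_\Omega u)$, obtained by integrating the $u$-equation (the chemotaxis term integrates to zero by the no-flux condition) and using Jensen's inequality on $\int_\Omega b u^2 \ge b_{\inf}|\Omega|^{-1}(\int_\Omega u)^2$, gives $\int_\Omega u(t,x;s,u_0)dx \le \max\{\int_\Omega u_0, \frac{a_{\sup}}{b_{\inf}}|\Omega|\}$ for all $t \ge s$ and $\limsup_{t-s\to\infty}\int_\Omega u \le \frac{a_{\sup}}{b_{\inf}}|\Omega|$.

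Next I would choose the constants so that $\mathcal{E}$ is forward invariant. The subtlety is that the quantities $\tilde M_1(p,\tau,s,u_0)$ and $\tilde M_2(p,q,\tau,s,u_0)$ in Theorem \ref{main-thm1} depend on $\int_\Omega u(\tau,\cdot)$, which for $u_0\in\mathcal{E}$ we already control by $M_0^*$; so once $M_0^*$ is fixed (say $M_0^* := $ some constant $\ge \frac{a_{\sup}}{b_{\inf}}|\Omega|$ chosen large enough that $\int_\Omega u_0 \le M_0^* \Rightarrow (1.2)'$ holds — here one uses the Hölder remark in Remark \ref{rmrk-1}(3) in reverse, or more directly the hypothesis $(1.2)'$ is implied by a smallness-free bound on $\int_\Omega u_0^{-1}$, which in turn is implied by a bound on $\int_\Omega u_0^{-p}$ via Hölder when $p\ge 1$), the "tilde" terms become bounded by constants $\tilde M_1(M_0^*), \tilde M_2(M_0^*)$ depending only on the structural data. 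Then setting $M_1^* := M_1(p) + \tilde M_1(M_0^*)$ and $M_2^* := M_2(p,q) + \tilde M_2(M_0^*)$ (possibly enlarged), the estimates \eqref{revised-negative-p-eq1} and its $u^q$-analogue in Theorem \ref{main-thm1} show $\int_\Omega u^{-p}(t,\cdot;s,u_0)dx \le \max\{M_1^*, M_1^*\} = M_1^*$ and similarly for $u^q$, provided $u_0\in\mathcal{E}$ supplies $\int_\Omega u_0^{-p} \le M_1^*$ and $\int_\Omega u_0^q \le M_2^*$ as the starting data at $\tau = s$. This gives invariance. For \eqref{new-set-eq1} I would invoke parabolic $L^p$–$C^\theta$ regularity: with $q > \max\{2,N\}$ the drift $\chi\frac{u}{v}\nabla v$ is controlled in $L^q$ using the $v$-elliptic estimate $\|\nabla v\|_{L^\infty}, \|v^{-1}\|_{L^\infty}$ bounds (which follow from $\int_\Omega u^{-p}, \int_\Omega u^q$ bounds and elliptic regularity for $0 = \Delta v - \mu v + \nu u$), so semigroup smoothing on $[s,s+\tau]$ gives a uniform $C^\theta$ bound for $\theta < 1 - \frac{2N}{q}$, with the constant blowing up like $\tau^{-\theta/2}$ near $\tau = 0$ but harmless for fixed $\tau$.

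For part (2), the absorbing property is a direct consequence of the three $\limsup$ statements: \eqref{revised-negative-p-eq2} gives $\limsup\int_\Omega u^{-p} \le M_1(p) \le M_1^*$, the $u^q$ analogue gives $\limsup\int_\Omega u^q \le M_2(p,q) \le M_2^*$, and the mass ODE gives $\limsup\int_\Omega u \le \frac{a_{\sup}}{b_{\inf}}|\Omega| \le M_0^*$, yielding \eqref{new-set-eq2}. For \eqref{new-set-eq3}, I would argue that by \eqref{new-set-eq2} there is a time $T(u_0)$ after which $\int_\Omega u(t,\cdot), \int_\Omega u^{-p}(t,\cdot), \int_\Omega u^q(t,\cdot)$ all lie slightly below $M_0^*, M_1^*, M_2^*$ respectively — i.e. $u(t,\cdot;s,u_0) \in \mathcal{E}$ for $t \ge s + T(u_0)$ — and then apply \eqref{new-set-eq1} with a fixed $\tau$ to the shifted solution starting from that time, absorbing the $\tau$-dependence into a single constant $M_4^*(\theta)$. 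The main obstacle I anticipate is the bookkeeping in showing $u_0 \in \mathcal{E} \Rightarrow (1.2)'$ and $u(t,\cdot) \in \mathcal{E}$ simultaneously (the definition of $\mathcal{E}$ must be self-consistent): one has to verify that the bound $\int_\Omega u_0^{-p} \le M_1^*$ with the chosen $M_1^*$ actually implies the specific integral inequality in $(1.2)'$ at $\tau_0$, which forces $M_1^*$ to be chosen compatibly with $M_1(p) = \frac{b_{\sup}|\Omega|}{a_{\inf} - a_{\chi,\mu}}$ and the $\max$ structure of \eqref{revised-negative-p-eq1} — essentially one needs $M_1^* \ge M_1(p) + \tilde M_1$, and that $(1.2)'$ is guaranteed by $\int_\Omega u^{-1}(s+\tau_0,\cdot) \le M_1(p)\max\{1,1/\chi\}$, which the invariance estimate delivers after one time step. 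Making these inequalities mutually compatible is the crux; the rest is routine comparison-principle and parabolic-regularity arguments.
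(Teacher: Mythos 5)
Your overall strategy (define the rectangle from the explicit bounds of Theorem \ref{main-thm1} plus the mass lemma, verify invariance via the max-structure, get $C^\theta$ bounds by semigroup smoothing, then absorb via the limsup estimates and a restart) is the same as the paper's, but the point you yourself call ``the crux'' is resolved incorrectly. You propose to take $M_1^*:=M_1(p)+\tilde M_1(M_0^*)$, ``possibly enlarged,'' and to recover $(1.2)'$ for $u_0\in\mathcal{E}$ ``after one time step'' from the invariance estimate. This is circular: every estimate you would use for that step --- global existence (Lemma \ref{prelim-lm-01}) and all the inequalities of Theorem \ref{main-thm1}, including \eqref{revised-negative-p-eq1} --- already assumes $(1.2)'$ as a hypothesis, so you cannot use them to produce $(1.2)'$. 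It also fails quantitatively: when $\chi\ge 1$ the threshold in $(1.2)'$ is exactly $M_1(p)=\frac{b_{\sup}|\Omega|}{a_{\inf}-a_{\chi,\mu}}$, and the exponential estimate only gives $\int_\Omega u^{-1}(t)\le e^{-\gamma(t-\tau)}\int_\Omega u^{-1}(\tau)+M_1(p)$, which is strictly above $M_1(p)$ at every finite time if $M_1^*>M_1(p)$; so no fixed $\tau_0$ works uniformly over $\mathcal{E}$. (Your alternative suggestion that $M_0^*$ be ``chosen large enough that $\int_\Omega u_0\le M_0^*$ implies $(1.2)'$'' is backwards: an upper bound on the mass can never imply the lower-bound-type condition $(1.2)'$.) The paper's resolution is to choose the constants exactly, not enlarged: $p=1$ (from Theorem \ref{main-thm1}(1)), $M_1^*=M_1(p)$, which is $\le \frac{b_{\sup}|\Omega|\max\{1,1/\chi\}}{a_{\inf}-a_{\chi,\mu}}$, so $u_0\in\mathcal{E}$ satisfies $(1.2)'$ with $\tau_0=0$ outright; and $M_0^*=\frac{a_{\sup}}{b_{\inf}}|\Omega|$, so that Lemma \ref{main-lem0} forces $m^*(\tau,s,u_0)=\frac{a_{\sup}}{b_{\inf}}|\Omega|$ and the tilde terms vanish, making invariance with these un-enlarged constants consistent. (Also note $\int_\Omega u_0>0$ for $u_0\in\mathcal{E}$ must be checked; Cauchy--Schwarz against $\int_\Omega u_0^{-p}\le M_1^*$ does it.)

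A second, smaller gap is in your derivation of \eqref{new-set-eq3}: from \eqref{new-set-eq2} you only get that $\int_\Omega u$, $\int_\Omega u^{-p}$, $\int_\Omega u^q$ eventually lie below $M_i^*+\epsilon$, not below $M_i^*$, so you cannot conclude that the shifted solution actually enters $\mathcal{E}$ and then quote \eqref{new-set-eq1} verbatim. The paper avoids this by fixing $T_0(u_0)$ with the relaxed bounds $2M_1^*$, $2M_2^*$ valid for $t\ge s+T_0(u_0)$, restarting the solution at $\tilde s=s+T_0(u_0)$, and rerunning the same fractional-power smoothing argument with these doubled constants to produce $M_4^*(\theta)$; the fix is easy but needs to be said. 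Your regularity step itself (using $q>\max\{2,N\}$, the elliptic estimate for $v$, the lower bound on $v$ coming from the $\int_\Omega u^{-p}$ bound, and $A^{\beta}e^{-tA}\nabla\cdot$ smoothing with $2\beta-\tfrac{2N}{q}>\theta$) is in line with the paper's proof of \eqref{new-set-eq1}.
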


\begin{remark}
\label{rmrk-2}
\begin{itemize}

\item[(1)] Assume that $(1.5)'$  holds. \eqref{new-set-eq3} provides some improvement
of \eqref{paper1-eq2}
proved in \cite{HKWS}. In fact,   by \eqref{new-set-eq3}, for any $s\in\RR$ and $u_0$ satisfying $(1.2)'$,
\begin{equation}
\label{new-set-eq5}
\|u(t,\cdot;s,u_0)\|_\infty \le M_4^*(\theta)\quad \forall t-s\gg 1,
\end{equation}
which provides an ultimate upper bound independent of $u_0$ for $\|u(t,\cdot;s,u_0)\|_\infty$ and hence improves \eqref{paper1-eq2}
proved in \cite{HKWS}.

\item[(2)]  Theorem \ref{main-thm2}(2) implies that $\mathcal{E}$ is both pullback absorbing and forward absorbing  in the sense that for any  $u_0$ satisfying $(1.2)'$,
\begin{equation*}
\begin{cases}
\displaystyle \limsup_{s\to -\infty}\int_\Omega u(t,x;s,u_0)dx\le M_0^*\quad &\forall\, \, t\in\mathbb{R}\cr
\displaystyle \limsup_{s\to -\infty} \int_\Omega u^{-p}(t,x;s,u_0)dx\le M_1^*\quad &\forall\,\, t\in\mathbb{R}\cr
\displaystyle \limsup_{s\to- \infty}\int_\Omega u^{q} (t,x;s,u_0)dx\le M_2^*\quad &\forall\, \, t\in\mathbb{R},
\end{cases}
\end{equation*}
and
\begin{equation*}
\begin{cases}
\displaystyle \limsup_{t\to\infty}\int_\Omega u(t,x;s,u_0)\le M_0^*\quad &\forall\,\, s\in\mathbb{R}\cr
\displaystyle \limsup_{t\to \infty} \int_\Omega u^{-p}(t,x;s,u_0)dx\le M_1^*\quad &\forall\,\, s\in\mathbb{R}\cr
\displaystyle \limsup_{t\to \infty}\int_\Omega u^{q} (t,x;s,u_0)dx\le M_2^*\quad &\forall\,\, s\in\mathbb{R},
\end{cases}
\end{equation*}
 respectively.

\item[(3)] Note that the existence of bounded absorbing sets and eventual initial-independent boundedness of positive solutions are strongly related.
It should be pointed out that the existence of absorbing sets and eventual  boundedness
of positive solutions  have been studied in various logistic chemotaxis models. For example,  in \cite{Win-new}, Winkler
studied the existence of  bounded absorbing sets in $L^\infty$  for the following parabolic-elliptic chemotaxis model with regular sensitivity and logistic type source,
$$
\begin{cases}
u_t=\Delta u-\chi\nabla\cdot (u\nabla v)+g(u),\quad &x\in\Omega\cr
0=\Delta v-v+u,\quad &x\in\Omega\cr
\frac{\p u}{\p n}=\frac{\p v}{\p n}=0,\quad &x\in\p\Omega,
\end{cases}
$$
where $g(u)=Au-Bu^\alpha$ for some $A,B>0$ and $\alpha>1$.
We refer the reader to \cite{Vig-new1}, \cite{Vig-new2} and references therein for the study of eventual boundedness of positive solutions of the following parabolic-parabolic chemotaxis model,
$$
\begin{cases}
u_t=\Delta u-\chi\nabla\cdot (u\nabla v)+g(u),\quad &x\in\Omega\cr
v_t=\Delta v-v+u,\quad &x\in\Omega\cr
\frac{\p u}{\p n}=\frac{\p v}{\p n}=0,\quad &x\in\p\Omega,
\end{cases}
$$
where $g(u)$ is a logistic type nonlinear function.
\end{itemize}
\end{remark}

The following theorem is on the  pointwise persistence, which is strongly related to \eqref{new-set-eq4}  and   provides some further insight on  the asymptotic behavior of globally defined positive solutions of \eqref{main-eq}.

\medskip

\begin{theorem} [Uniform pointwise persistence]
\label{main-thm3}
Assume that $(1.5)'$ holds. There is $m^*>0$ such that for any $u_0$ satisfying
$(1.2)'$ and $s\in\RR$,
\begin{equation}
\label{new-set-eq6}
 \liminf_{t-s\to\infty} \inf_{x\in\Omega}u(t,x;s,u_0)\ge m^*.
\end{equation}
\end{theorem}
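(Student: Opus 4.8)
\emph{Proof idea.} The plan is to show that, once $t-s$ is large, the component $u(t,x;s,u_0)$ is a nonnegative solution of a uniformly parabolic equation whose coefficients are bounded by constants independent of $u_0$, to apply a parabolic Harnack inequality up to the Neumann boundary, and then to convert the eventual \emph{uniform} lower bound on the mass $\int_\Omega u$ recorded in \eqref{new-set-eq4} into the pointwise bound \eqref{new-set-eq6}. To set this up, substitute $\Delta v=\mu v-\nu u$ from the second equation of \eqref{main-eq} into $\nabla\cdot\big(\tfrac{u}{v}\nabla v\big)=\tfrac{\nabla u\cdot\nabla v}{v}+u\tfrac{\Delta v}{v}-u\tfrac{|\nabla v|^2}{v^2}$ to rewrite the first equation as
\begin{equation*}
u_t=\Delta u-\chi\,\frac{\nabla v}{v}\cdot\nabla u+\Big(a(t,x)-\chi\mu+\chi\,\frac{|\nabla v|^2}{v^2}+\chi\nu\,\frac{u}{v}-b(t,x)\,u\Big)u,\qquad \frac{\p u}{\p n}=0\ \text{on}\ \p\Omega.
\end{equation*}
Fix $u_0$ satisfying $(1.2)'$ and $s\in\RR$. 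By \eqref{new-set-eq5} there are $C_0>0$, independent of $u_0$, and $T_1(u_0)>0$ with $\|u(t,\cdot;s,u_0)\|_\infty\le C_0$ for all $t\ge s+T_1(u_0)$; elliptic regularity applied to $0=\Delta v-\mu v+\nu u$ (together with $\|v\|_\infty\le\tfrac{\nu}{\mu}\|u\|_\infty$, which follows from $(\mu I-\Delta)^{-1}1=\tfrac1\mu$) then gives $\|v(t,\cdot;s,u_0)\|_{C^{1,\beta}(\bar\Omega)}\le C$ for the same range of $t$, with $C$ independent of $u_0$ and some $\beta\in(0,1)$. Finally, since $v(t,\cdot;s,u_0)=\nu(\mu I-\Delta)^{-1}u(t,\cdot;s,u_0)$ under Neumann boundary conditions and the associated (positive) Green's function $G_\mu$ on the smooth bounded domain $\Omega$ satisfies $g_0:=\inf_{\bar\Omega\times\bar\Omega}G_\mu>0$, we get
\begin{equation*}
v(t,x;s,u_0)\ \ge\ \nu g_0\int_\Omega u(t,y;s,u_0)\,dy\qquad\forall\,x\in\Omega,\ t>s,
\end{equation*}
which together with \eqref{new-set-eq4} yields $c_v>0$, independent of $u_0$, and $T_2(u_0)>0$ with $v(t,\cdot;s,u_0)\ge c_v$ for $t\ge s+T_2(u_0)$. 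Hence, for $t\ge s+T_*$ with $T_*:=\max\{T_1(u_0),T_2(u_0)\}+1$, the drift coefficient $-\chi\nabla v/v$ and the zeroth-order coefficient in the displayed equation are bounded in $L^\infty$ by constants independent of $u_0$.

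Next I would invoke a parabolic Harnack inequality, valid up to the boundary, for nonnegative solutions of uniformly parabolic equations with bounded coefficients under homogeneous Neumann (conormal) boundary conditions on the smooth domain $\Omega$ — obtainable from the Krylov--Safonov/Moser estimates after locally flattening and reflecting the boundary, and of the type routinely used in the chemotaxis literature. Applied to $u$ on the fixed-length cylinder $[t-1,t]\times\bar\Omega$ for any $t\ge s+T_*$, it produces a constant $C_H>0$ depending only on $\Omega$ and on the coefficient bounds above — hence independent of $u_0$ — such that
\begin{equation*}
\sup_{x\in\Omega}u\big(t-\tfrac12,x;s,u_0\big)\ \le\ C_H\,\inf_{x\in\Omega}u(t,x;s,u_0).
\end{equation*}
Integrating the left-hand side over $\Omega$ and using \eqref{new-set-eq4} once more (so that $\int_\Omega u(t-\tfrac12,x;s,u_0)\,dx\ge\tfrac12\,|\Omega|^{\frac{p+1}{p}}/(2M_1(p))^{\frac1p}$ for $t-s$ large) gives
\begin{equation*}
\inf_{x\in\Omega}u(t,x;s,u_0)\ \ge\ \frac{1}{|\Omega|\,C_H}\int_\Omega u\big(t-\tfrac12,x;s,u_0\big)\,dx\ \ge\ \frac{|\Omega|^{\frac1p}}{2\,C_H\,(2M_1(p))^{\frac1p}}\ =:\ m^*
\end{equation*}
for all $t-s$ sufficiently large, which is \eqref{new-set-eq6} with $m^*>0$ independent of $u_0$.

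The main obstacle is to assemble \emph{all} the hypotheses of the Harnack inequality with constants independent of $u_0$: the uniform upper bound on $\|u\|_\infty$ is the content of Theorem \ref{main-thm2}, but the strictly positive lower bound on $v$ is essentially equivalent to an eventual \emph{uniform} lower bound on $\int_\Omega u$, which is exactly the improvement \eqref{new-set-eq4} deduced from Theorem \ref{main-thm1}(1); thus the $\int_\Omega u^{-p}$ estimate of Theorem \ref{main-thm1}(1) is what makes the argument go through. A secondary technical point is to cite (or supply) the boundary Harnack estimate in the precise generality needed, i.e. for the Neumann problem with the bounded first-order term $-\chi\nabla v/v\cdot\nabla u$. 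Note finally that the times $T_1(u_0),T_2(u_0),T_*$ may depend on $u_0$; this is harmless, since only the $\liminf$ as $t-s\to\infty$ is asserted and the constants $C_0$, $c_v$, $C_H$, $m^*$ are all $u_0$-independent.
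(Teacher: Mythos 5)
Your proof is correct in spirit but takes a genuinely different route from the paper's. The paper proves Theorem~\ref{main-thm3} by a soft compactness/contradiction argument: assuming uniform pointwise persistence fails, it extracts a sequence of initial data $u_n$ and times $t_n,s_n$ with $\inf_x u(t_n,\cdot;s_n,u_n)\to 0$, uses the uniform $C^\theta$-bound of Theorem~\ref{main-thm2} and assumption {\bf (H)} to pass (via Arzel\`a--Ascoli) to limits $u_0^*$, $u_1^*$, $a^*$, $b^*$ of $u(t_n-1,\cdot)$, $u(t_n,\cdot)$ and the time-shifted coefficients, applies the continuous-dependence Lemma~\ref{prelim-lm-011} to identify $u_1^*=u(1,\cdot;0,u_0^*,a^*,b^*)$, and then invokes the strong maximum principle together with the uniform mass lower bound $\int_\Omega u_0^*\ge\tilde M_1^*/2>0$ to obtain $\inf_\Omega u_1^*>0$, contradicting $\inf_\Omega u(t_n,\cdot)\to 0$. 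You instead argue directly and quantitatively: you rewrite the $u$-equation as a linear uniformly parabolic equation with drift $-\chi\nabla v/v$ and bounded zeroth-order term, use the absorbing-set estimates (eventual $u_0$-independent $L^\infty$-bound on $u$, elliptic $W^{2,r}$ regularity for $v$, and the Green's-function/mass lower bound giving $v\ge c_v>0$) to make the coefficient bounds $u_0$-independent for $t-s$ large, invoke a parabolic Harnack inequality up to the Neumann boundary, and then use \eqref{new-set-eq4} to pass from the eventual mass lower bound to a pointwise lower bound. Both rest on the same structural pillars (the absorbing set of Theorem~\ref{main-thm2} and the mass lower bound coming from the $\int_\Omega u^{-p}$ estimate), but they diverge after that: the paper's argument is non-quantitative and relies only on tools it has already set up (Lemma~\ref{prelim-lm-011} and the strong maximum principle for classical solutions), while your Harnack argument yields an explicit formula for $m^*$ but requires an external ingredient — the Krylov--Safonov/Moser boundary Harnack estimate for Neumann problems with bounded drift — that is not developed or cited in the paper and must be supplied, as you yourself flag. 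That caveat aside, the argument assembles correctly, and the dependence of $T_1(u_0)$, $T_2(u_0)$ on $u_0$ is indeed harmless because only a $\liminf_{t-s\to\infty}$ is claimed.
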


\begin{remark}
\label{rmrk-3}
The pointwise persistence \eqref{new-set-eq6} is proved for the first time for chemotaxis models with singular sensitivity and logistic source,  and implies the mass persistence.  We point out that mass persistence  for chemotaxis systems with regular chemotaxis sensitivity and logistic source  was studied in \cite{TaWi} and pointwise persistence  was studied in  \cite{IsSh1, IsSh2}.
\end{remark}

By \eqref{new-set-eq5} and \eqref{new-set-eq6}, under the assumption $(1.5)'$, any globally defined positive solution with initial condition $u_0$ satisfying $(1.2)'$  is eventually bounded above and below by some positive constants independent of its initial condition. It is natural to ask whether globally  defined positive solutions converge to some positive entire solution.
  When $a(t,x)\equiv a$ and $b(t,x)\equiv b$,  it is clear that $(\frac{a}{b},\frac{\nu}{\mu}\frac{a}{b})$ is a positive  entire solution of \eqref{main-eq}.
When $a(t,x)$ and $b(t,x)$ depend on $t$ and $x$, it is not clear at all whether \eqref{main-eq} has positive entire solutions.
The last main result of the current paper is on the existence of positive entire solutions of \eqref{main-eq}.

\begin{theorem} [Existence of positive entire solutions]
\label{main-thm4}
Suppose that $(1.5)'$  holds. Let $\mathcal{E}$ be as in Theorem \ref{main-thm2}.  Then the followings hold.
\begin{itemize}
\item[(1)] There is a positive entire solution $(u^*(t,x),v^*(t,x))$ of \eqref{main-eq} with
$u^*(t,\cdot)\in\mathcal{E}$ for any $t\in\RR$.

\item[(2)]
 If there is $T>0$ such that $a(t+T,x)=a(t,x)$ and $b(t+T,x)=b(t,x)$, then \eqref{main-eq} has a positive $T$-periodic solution $(u,v)=(u^*(t,x),v^*(t,x))$  with  $u^*(t,\cdot)\in\mathcal{E}$ for any $t\in\RR$.

\item[(3)]
  If $a(t,x)\equiv a(x)$ and $b(t,x)\equiv b(x)$, then \eqref{main-eq} has a positive stationary solution
  $(u,v)=(u^*(x),v^*(x))$ with $u^*(\cdot)\in\mathcal{E}$.
\end{itemize}
\end{theorem}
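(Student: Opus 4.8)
The plan is to derive all three statements from the a priori bounds already available — Theorem~\ref{main-thm1} (bounds on $\int_\Omega u^{\pm}$), the invariance, regularity and absorption of $\mathcal{E}$ in Theorem~\ref{main-thm2}, and the uniform pointwise persistence of Theorem~\ref{main-thm3} — by a compactness-plus-fixed-point scheme. The preparatory step, on which everything rests, is to upgrade the H\"older bound \eqref{new-set-eq1} to uniform higher-order bounds: for every solution issuing from $\mathcal{E}$ one has, on each strip $[s+\tau,\infty)\times\bar\Omega$ and each small $\theta>0$, bounds $\|u(\cdot;s,u_0)\|_{C^{1+\theta/2,\,2+\theta}}\le C(\tau,\theta)$ and $\|v(\cdot;s,u_0)\|_{C^{2+\theta}}\le C(\tau,\theta)$ with $C$ independent of $(s,u_0)$. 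This is the main obstacle. It is obtained by the standard bootstrap: $u(t,\cdot)\in\mathcal{E}$ is bounded in $C^\theta$, hence in $L^\infty$; elliptic Schauder/$W^{2,p}$ estimates for $0=\Delta v-\mu v+\nu u$ bound $v$ in $C^{2+\theta}$, and the maximum principle for that equation together with the pointwise lower bound $\inf_x u\ge m^*$ from Theorem~\ref{main-thm3} gives $\inf_x v\ge\frac{\nu}{\mu}m^*>0$; rewriting the first equation in nondivergence form, $u_t=\Delta u-\chi\frac{\nabla v}{v}\cdot\nabla u+\big(\chi\frac{|\nabla v|^2}{v^2}+\chi\frac{\nu u}{v}-\chi\mu+a-bu\big)u$, exhibits $u$ as a solution of a parabolic equation with $C^\theta$ coefficients, so interior-in-time parabolic Schauder estimates close the loop. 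The remaining ingredients are routine once this is in place.

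\textbf{Part (1).} Pick any $u_0\in\mathcal{E}$ (which is nonempty) and set $u_n(t,x)=u(t,x;-n,u_0)$, with $v_n$ the corresponding second component. By invariance of $\mathcal{E}$, $u_n(t,\cdot)\in\mathcal{E}$ for all $t\ge-n$, so the preparatory step bounds $(u_n,v_n)$ uniformly on any fixed compact subset of $\RR\times\bar\Omega$ once $n$ is large. Arzel\`a--Ascoli and a diagonal extraction produce $u_{n_j}\to u^*$ in $C^{1,2}_{\mathrm{loc}}(\RR\times\bar\Omega)$ and $v_{n_j}\to v^*$ in $C^{0,2}_{\mathrm{loc}}$, and $(u^*,v^*)$ is an entire classical solution of \eqref{main-eq} with $u^*(t,\cdot)\in\mathcal{E}$ for every $t$ ($\mathcal{E}$ being closed in $C^0(\bar\Omega)$). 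For each fixed $t$, Theorem~\ref{main-thm3} applied to the fixed datum $u_0$ gives $\inf_x u^*(t,x)=\liminf_{n\to\infty}\inf_x u(t,x;-n,u_0)\ge m^*$; hence $\inf_{\RR\times\Omega}u^*\ge m^*>0$, and then $v^*\ge\frac{\nu}{\mu}m^*>0$ by the maximum principle. Thus $(u^*,v^*)$ is a positive entire solution with $u^*(t,\cdot)\in\mathcal{E}$.

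\textbf{Part (2).} Assume $a(t+T,x)=a(t,x)$ and $b(t+T,x)=b(t,x)$. Define the period map $Pu_0:=u(T,\cdot;0,u_0)$. Then $P(\mathcal{E})\subseteq\mathcal{E}$ by invariance; $P$ is continuous on $\mathcal{E}$ by continuous dependence on initial data; and $P(\mathcal{E})$ is precompact in $C^0(\bar\Omega)$, being bounded in $C^\theta(\bar\Omega)$ by \eqref{new-set-eq1} with $\tau=T$. Moreover $\mathcal{E}$ is convex and closed in $C^0(\bar\Omega)$: the constraints $\int_\Omega u\le M_0^*$ and $\int_\Omega u^q\le M_2^*$ are convex and continuous, and $u\mapsto\int_\Omega u^{-p}$ is convex and lower semicontinuous by Fatou. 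Hence the closed convex hull $K$ of $P(\mathcal{E})$ satisfies $K\subseteq\mathcal{E}$ and, being the closed convex hull of a compact set in a Banach space, is compact. Thus $P\colon K\to K$ is a continuous self-map of a compact convex set, and Schauder's fixed point theorem yields $u_0^*=Pu_0^*\in K\subseteq\mathcal{E}$. By the evolution property of solutions and $T$-periodicity of the coefficients, $w(t):=u(t,\cdot;0,u_0^*)$ satisfies $w(t+T)=w(t)$ for $t\ge0$ and extends to a $T$-periodic entire solution with $w(t)\in\mathcal{E}$ for all $t$; since solutions are positive for $t>0$ and $w(0)=w(T)$, the continuous function $w$ is positive on the compact set $[0,T]\times\bar\Omega$, so $\inf_{\RR\times\Omega}w>0$, and one takes $v^*$ accordingly.

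\textbf{Part (3).} If $a,b$ are independent of $t$ they are $\tfrac1k$-periodic for every $k\in\NN$, so Part~(2) yields, for each $k$, a $\tfrac1k$-periodic entire solution $(u_k,v_k)$ with $u_k(t,\cdot)\in\mathcal{E}$. The preparatory step gives $C^{1+\theta/2,2+\theta}_{\mathrm{loc}}\times C^{2+\theta}_{\mathrm{loc}}$ bounds uniform in $k$; extract $u_k\to u^*$, $v_k\to v^*$ in $C^{1,2}_{\mathrm{loc}}\times C^{0,2}_{\mathrm{loc}}$. For any $t_0\in\RR$ and $h>0$ and $k$ large, write $h=m_k/k+r_k$ with $m_k=\lfloor hk\rfloor$ and $r_k\in[0,1/k)$; then $\tfrac1k$-periodicity gives $u_k(t_0+h,\cdot)=u_k(t_0+r_k,\cdot)$, and letting $k\to\infty$, using the $t$-equicontinuity supplied by the uniform $C^1$-in-time bound, yields $u^*(t_0+h,\cdot)=u^*(t_0,\cdot)$. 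Hence $u^*$ is independent of $t$; consequently $(u^*,v^*)$ solves the stationary system, $u^*\in\mathcal{E}$, and since $u^*\ge0$ with $u^*\not\equiv0$ (because $\int_\Omega (u^*)^{-p}\le M_1^*<\infty$) and $u^*$ solves a linear elliptic equation with bounded coefficients, the strong maximum principle and Hopf's lemma give $u^*>0$ on $\bar\Omega$, whence $v^*\ge\frac{\nu}{\mu}\min_{\bar\Omega}u^*>0$. This completes all three cases.
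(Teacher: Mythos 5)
Your overall architecture coincides with the paper's: fix $u_0\in\mathcal{E}$ and take limits of $u(\cdot,\cdot;-n,u_0)$ for (1); apply Schauder's fixed point theorem to the period map on the closed convex set $\mathcal{E}$ for (2) (the paper applies Lemma \ref{sch-fxd-point} to $\mathcal{E}$ directly, you to the closed convex hull of $P(\mathcal{E})$ --- equivalent); and pass to the limit of $1/k$-periodic solutions for (3). The difference is the compactness machinery. The paper never leaves the level of $C^0(\bar\Omega)$: it extracts, via \eqref{new-set-eq1} and Arzel\`a--Ascoli, convergent subsequences of the solutions at fixed times, identifies limits as solutions through the continuity of the solution map in initial data and coefficients (Lemma \ref{prelim-lm-011}), and builds the backward orbit by a diagonal argument; in (3) it shows directly that the $C^0$-limit $u^*$ of the $1/n$-periodic data satisfies $u(t,\cdot;0,u^*)=u^*$ by an $\epsilon$-argument. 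You instead upgrade \eqref{new-set-eq1} to uniform $C^{1+\theta/2,2+\theta}\times C^{2+\theta}$ bounds and extract in $C^{1,2}_{\rm loc}$. That route is viable, and it buys you something the paper leaves implicit: the explicit verification $\inf_{\RR\times\Omega}u^*\ge m^*>0$ via Theorem \ref{main-thm3} in (1), and the positivity checks in (2) and (3).

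Two caveats on your ``preparatory step,'' which is the only unproven ingredient and carries the whole of (1) and (3). First, the uniform lower bound on $v$ that keeps the coefficients $\nabla v/v$, $|\nabla v|^2/v^2$, $u/v$ bounded should not be sourced from Theorem \ref{main-thm3}: that theorem is only asymptotic ($\liminf_{t-s\to\infty}$), so it cannot give a bound on every strip $[s+\tau,\infty)$ with a constant independent of $(s,u_0)$. The correct uniform source is the invariance of $\mathcal{E}$: $\int_\Omega u^{-p}(t,\cdot;s,u_0)\,dx\le M_1^*$ for all $t>s$, hence $\int_\Omega u\ge |\Omega|^{\frac{p+1}{p}}(M_1^*)^{-\frac1p}$ by Lemma \ref{main-lem3} and $v\ge \delta_0|\Omega|^{\frac{p+1}{p}}(M_1^*)^{-\frac1p}$ by Lemma \ref{prelim-lm-00}, exactly as in Lemma \ref{prelim-lm-2} and in the proof of \eqref{new-set-eq1}. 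Second, parabolic Schauder estimates require the coefficients to be H\"older in $(t,x)$ jointly, whereas \eqref{new-set-eq1} gives only spatial H\"older bounds uniform in $t$; you must first obtain time-regularity of $u$ (e.g. interior $W^{2,1}_p$ estimates with merely bounded coefficients plus parabolic Sobolev embedding, or the variation-of-constants formula), then transfer it to $\nabla v$ and $D^2v$ through the elliptic equation for differences $v(t)-v(t')$, before Schauder closes the loop. Both points are repairable (and the second is avoidable entirely by arguing at the $C^0$ level as the paper does), so I regard your proposal as correct in substance, with these gaps to be filled in the regularity step.
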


\begin{remark}
\label{rmrk-4}
As stated in (c), when $N=2$ and $a(t,x)\equiv a$, $b(t,x)\equiv b$, the constant entire solution
$(\frac{a}{b},\frac{\nu}{\mu}\frac{a}{b})$ is exponentially stable under the assumption \eqref{main-assumption}
and some other assumption (see \cite[Theorem 1]{CaWaYu}). By the  arguments of \cite[Theorem 1]{CaWaYu},
it can also be proved that,  for general $N\ge 1$,   $(\frac{a}{b},\frac{\nu}{\mu}\frac{a}{b})$ is exponentially stable under the assumption \eqref{main-assumption}
and some other assumption. But when $a(t,x), b(t,x)$ are not constant functions, the arguments of \cite[Theorem 1]{CaWaYu}
are difficult to apply. We plan to study the stability of positive entire solutions of \eqref{main-eq} somewhere else.
\end{remark}

It should be pointed out that  a considerable amount of research has also been carried out toward  the finite-time  blow-up or global boundedness of solutions of the following parabolic-parabolic
chemotaxis model with singular sensitivity,
\begin{equation}
\label{parabolic-parabolic-eq}
\begin{cases}
u_t=\Delta u-\chi\nabla\cdot (\frac{u}{v} \nabla v)+u(a-bu),\quad &x\in \Omega,\cr
v_t=\Delta v - v+ u,\quad &x\in \Omega, \quad \cr
\frac{\p u}{\p n}=\frac{\p v}{\p n}=0,\quad &x\in\p\Omega.
\end{cases}
\end{equation}
For example, for the case that $a=b=0$, it is proved  that if $0<\chi<\sqrt{\frac{2}{N}}$,
then for any initial data $u_0\in C^0(\bar\Omega)$ and $v_0\in W^{1,\infty}(\Omega)$ with
$u_0\ge 0$ and $v_0>0$ in $\bar\Omega$, there exists a
 global-in-time classical solution of \eqref{parabolic-parabolic-eq} (see \cite{Win4}),  and moreover, the global-in-time classical solution is bounded (see \cite{Fuj}). When $\Omega$ is a smooth, bounded, convex two-dimensional domain, it is shown in \cite{Lan} that there is $\chi_0>1$ such that  for any initial data $u_0\in C^0(\bar\Omega)$ and $v_0\in W^{1,\infty}(\Omega)$ with
$u_0\ge 0$ and $v_0>0$ in $\bar\Omega$, \eqref{parabolic-parabolic-eq} has a  global bounded solution for $\chi\in (0,\chi_0)$, which implies that $0<\chi<1$ is not critical for the global existence of \eqref{parabolic-parabolic-eq} on two-dimensional domains. See \cite{FuSe0, LaWi, Win4} for more results.

For the case that $a,b>0$,  it is proved that if $N=2$, then for any initial data $0\le u_0\in L^2(\Omega)$ and
$0<v_0\in H^{1+\epsilon_0}(\Omega)$ with $\inf_{x\in\Omega}v_0(x)>0$ and $\epsilon_0\in (0,\frac{1}{2})$, \eqref{parabolic-parabolic-eq} has a global classical solution (see  \cite{Aid, ZhZh}), and if \eqref{main-assumption} holds with $\mu=1$ and $a_{\inf}=a$, then globally defined positive solutions of \eqref{parabolic-parabolic-eq} are bounded (see \cite{ZhZh}) and the constant solution $(\frac{a}{b},\frac{a}{b})$ is exponentially stable (see \cite[Theorem 1.1]{ZhMuWiHu}).  See \cite{ZhZh1} for the existence of weak solutions of \eqref{parabolic-parabolic-eq} with $a,b>0$ in the case $N\ge 3$. However, there is little study on the global existence of classical solutions of \eqref{parabolic-parabolic-eq} when $N\ge  3$.

We also point out that a considerable amount of research has  been carried out toward  the finite-time blow-up or global boundedness of solutions of
 the following chemotaxis model with regular chemotaxis sensitivity and logistic source,
  \begin{equation}
\label{main-eq0}
\begin{cases}
u_t=\Delta u- \chi\nabla \cdot (u\nabla v)+u(a-bu),\quad &x\in \Omega, \cr
\tau v_t=\Delta v-\mu v+\nu u,\quad &x\in \Omega,\cr
\frac{\p u}{\p n}=\frac{\p v}{\p n}=0,\quad &x\in \partial\Omega.
\end{cases}
\end{equation}
 For example,  when $\tau=0$ and  $\mu=\nu=1$,  it is proved in \cite{TeWi1} that,  if $N\le 2$ or  $b>\frac{N-2}{N}\chi$, then
for every nonnegative initial data $u_0\in C^0(\bar\Omega)$, \eqref{main-eq0}  possesses a global bounded classical solution which is unique.
It should be pointed out that, when $a=b=0$ and $N\ge 2$,
finite-time blow-up of positive  solutions occurs under some condition on the mass and the
moment of the initial data   (see \cite{HeMeVe}, \cite{HeVe},  \cite{Nag2}, \cite{NaSe3}).
Hence the finite time blow-up phenomena in \eqref{main-eq0} is
suppressed to some extent by  the logistic source.
But it remains open whether in any space dimensional setting, for every nonnegative initial data  $u_0\in C^0(\bar\Omega)$ \eqref{main-eq0} possesses a unique global  classical solution  for every $\chi>0$ and every $b>0$. {It should be  pointed out that finite-time blow-up  occurs in  various variants of \eqref{main-eq0}, for example, it occurs in \eqref{main-eq0} with $\tau=0$, with  the logistic source being replaced by logistic-type superlinear degradation (see \cite{TaYo, Win5}), and/or with the second equation being replaced by the following one
  $$
  0=\Delta v-\frac{1}{|\Omega|}\int_\Omega u(\cdot,t)+u,\quad x\in\Omega
  $$
  (see \cite{BlFuLa, Fue1, Fue2, Win2}).}

The rest of this paper is organized in the following way. In section 2, we present some lemmas to be used in later sections. In section 3, we study the boundedness of
$\int_\Omega u^{-p}$ and $\int_\Omega u^q$  of  globally defined positive solutions of \eqref{main-eq} and prove the Theorem \ref{main-thm1}.  In section 4, we investigate
the existence of absorbing invariant sets and  the ultimate upper and lower bounds of globally defined positive solutions and
prove Theorems \ref{main-thm2} and \ref{main-thm3}. In section 5,
we explore the existence of   positive entire solutions of \eqref{main-eq} and  prove the Theorem \ref{main-thm4}.

\section{Preliminaries}

In this section, we present some lemmas to be used in later sections. Throughout this section, $\mu>0$ and $\nu>0$ are fixed.
 $C$ denotes some generic constant which is independent of solutions, but may not be the same at different places.

First, for convenience, we present two lemmas on fixed point theorems.

\begin{lemma}
\label{cont-family}
If $F$ is a closed subset of a Banach space $X,$ $G$ is a subset
of a Banach space $Y,$ $T_y:F \to F,$ $y$ in $G$ is a uniform contraction on $F$ and $T_y x$ is continuous in $y\in G$ for each fixed $x$ in $F,$ then the unique fixed point $g(y)$ of $T_y,$ $y$ in $G,$ is continuous in $y$.
\end{lemma}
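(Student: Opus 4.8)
The plan is to apply the Banach contraction mapping principle for each fixed value of the parameter and then use the uniformity of the contraction constant to transfer continuity in $y$ from the maps $T_y$ to their fixed points $g(y)$.

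First I would fix notation: let $\theta\in[0,1)$ be a common contraction ratio, so that $\|T_yx_1-T_yx_2\|\le\theta\|x_1-x_2\|$ for all $x_1,x_2\in F$ and all $y\in G$ (this is the meaning of ``uniform contraction''). Since $F$ is a closed subset of the Banach space $X$, it is a complete metric space, and since $T_y$ maps $F$ into $F$ and is a contraction, the contraction mapping principle provides, for each $y\in G$, a unique point $g(y)\in F$ with $T_yg(y)=g(y)$. This defines a map $g:G\to F$.

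To prove that $g$ is continuous, I would fix $y_0\in G$ and, for arbitrary $y\in G$, split
\[
\|g(y)-g(y_0)\|=\|T_yg(y)-T_{y_0}g(y_0)\|\le\|T_yg(y)-T_yg(y_0)\|+\|T_yg(y_0)-T_{y_0}g(y_0)\|.
\]
The first term on the right is at most $\theta\|g(y)-g(y_0)\|$ by the contraction property of $T_y$; moving it to the left-hand side gives
\[
\|g(y)-g(y_0)\|\le\frac{1}{1-\theta}\,\|T_yg(y_0)-T_{y_0}g(y_0)\|.
\]
Because $g(y_0)$ is a fixed element of $F$ and $y\mapsto T_yg(y_0)$ is continuous by hypothesis, the right-hand side tends to $0$ as $y\to y_0$, so $g$ is continuous at $y_0$; since $y_0\in G$ was arbitrary, $g$ is continuous on $G$.

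This argument is essentially routine and has no real obstacle; the one point deserving attention is that the contraction ratio $\theta$ must be independent of $y$ — precisely the ``uniform'' in ``uniform contraction'' — so that the constant $1/(1-\theta)$ in the final estimate does not depend on $y$ and cannot degenerate. I would also record explicitly that closedness of $F$ inside a Banach space is exactly what supplies the completeness needed to invoke the contraction mapping principle.
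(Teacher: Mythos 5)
Your argument is correct and is the standard proof of the uniform contraction principle: existence and uniqueness of $g(y)$ from the Banach fixed point theorem on the complete metric space $F$, followed by the estimate $\|g(y)-g(y_0)\|\le (1-\theta)^{-1}\|T_yg(y_0)-T_{y_0}g(y_0)\|$, with the uniformity of $\theta$ ensuring the constant does not depend on $y$. The paper itself does not write out a proof but simply cites Hale's book (Theorem 3.2 in Chapter 0), where exactly this argument is given, so your proposal matches the intended reasoning.
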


\begin{proof}
See \cite[Theorem 3.2 in Chapter 0]{hale}.
\end{proof}

\begin{lemma}
\label{sch-fxd-point}
Let $G$ be a closed convex set in a Banach space $X$ and let $T$ be a continuous mapping of $G$ into itself such that the image $TG$ is precompact. Then $T$ has a fixed point.
\end{lemma}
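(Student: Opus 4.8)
The plan is to reduce this to Brouwer's fixed point theorem by the standard Schauder projection argument, after first shrinking $G$ to a compact convex set on which $T$ still acts. (Alternatively, one may simply quote the Schauder--Tychonoff fixed point theorem, since below $T$ will be seen to map a compact convex set continuously into itself; we indicate the self-contained route.)

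First I would set $K := \overline{\mathrm{co}}\,(TG)$, the closed convex hull in $X$ of the precompact set $TG$. By Mazur's theorem, the closed convex hull of a precompact subset of a Banach space is compact, so $K$ is compact. Since $G$ is closed and convex and $TG \subseteq G$, we get $K \subseteq G$, and hence $T(K) \subseteq T(G) \subseteq K$. Thus $T$ restricts to a continuous map of the nonempty compact convex set $K$ into itself, and it suffices to find a fixed point of $T|_K$.

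Next, fix $\varepsilon>0$ and cover the precompact set $T(K)$ by finitely many balls $B(y_i,\varepsilon)$, $i=1,\dots,n$, with centers $y_i \in T(K)\subseteq K$. Define the Schauder projection $P_\varepsilon$ on $T(K)$ by
\[
P_\varepsilon(y)=\frac{\sum_{i=1}^{n}\mu_i(y)\,y_i}{\sum_{i=1}^{n}\mu_i(y)},\qquad \mu_i(y)=\max\{0,\ \varepsilon-\|y-y_i\|\},
\]
so that $\|P_\varepsilon(y)-y\|\le\varepsilon$ for every $y\in T(K)$, and $P_\varepsilon$ maps into $S_\varepsilon:=\mathrm{co}\{y_1,\dots,y_n\}$, a compact convex subset of a finite-dimensional subspace of $X$ with $S_\varepsilon\subseteq K$. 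The map $P_\varepsilon\circ T:S_\varepsilon\to S_\varepsilon$ is then a continuous self-map of a set homeomorphic to a compact convex subset of some $\mathbb{R}^m$, so Brouwer's fixed point theorem gives $x_\varepsilon\in S_\varepsilon$ with $P_\varepsilon(Tx_\varepsilon)=x_\varepsilon$; consequently $\|Tx_\varepsilon-x_\varepsilon\|=\|Tx_\varepsilon-P_\varepsilon(Tx_\varepsilon)\|\le\varepsilon$.

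Finally I would let $\varepsilon=1/k\to0$. The points $Tx_{1/k}$ lie in the compact set $K$, so along a subsequence $Tx_{1/k}\to x\in K$; since $\|x_{1/k}-Tx_{1/k}\|\to0$ we also have $x_{1/k}\to x$, and continuity of $T$ yields $Tx=\lim_k Tx_{1/k}=x$. The only place where genuine work occurs is the finite-dimensional reduction — checking the approximation estimate for the Schauder projection and that Brouwer's theorem applies to $P_\varepsilon\circ T$ on $S_\varepsilon$ — together with the appeal to Mazur's theorem for compactness of $K$; the rest is routine bookkeeping, and in a write-up one could legitimately replace all of it by a citation to the classical Schauder fixed point theorem.
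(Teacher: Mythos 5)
Your proof is correct. The paper itself does not prove this lemma at all: it simply cites the classical Schauder fixed point theorem (Corollary 11.2 in Gilbarg--Trudinger), of which the statement is a verbatim instance. What you have written is the standard textbook proof of that cited result: compactness of $K=\overline{\mathrm{co}}(TG)$ via Mazur's theorem, the inclusion $T(K)\subseteq K$, the Schauder projection $P_\varepsilon$ onto the convex hull of a finite $\varepsilon$-net with the estimate $\|P_\varepsilon y-y\|\le\varepsilon$, Brouwer's theorem applied to $P_\varepsilon\circ T$ on $S_\varepsilon$, and the limiting argument $\varepsilon=1/k\to 0$ using compactness of $K$ and continuity of $T$. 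All steps are sound (the only implicit hypothesis, as usual, is that $G$ is nonempty). So the difference is purely one of presentation: the paper outsources the lemma to a standard reference, while you supply a self-contained derivation; in the context of this paper the citation is the intended "proof," and your final remark that one could legitimately replace the argument by such a citation matches exactly what the authors do.
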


\begin{proof}
It follows from \cite[Corollary 11.2]{gitr}.
\end{proof}

Next, we present two lemmas  on the lower and upper bounds of  the solutions of
 \begin{equation}
\label{v-eq1}
    \begin{cases}
    \Delta v-\mu v+\nu u=0, \quad &x\in \Omega, \cr
    \frac{\p v}{\p n}=0, \quad \quad \quad &x\in \p \Omega.
    \end{cases}
\end{equation}
For given $u\in L^p(\Omega)$, let $v(\cdot;u)$ be the solution of \eqref{v-eq1}.

\begin{lemma}
\label{prelim-lm-00}
 Suppose that $u \in C^0(\bar \Omega)$ is nonnegative  and  $\int_{\Omega} u >0$.
Then
\begin{equation*}
    v(x;u) \ge {\delta_0  }\int_{\Omega} u>0 \quad {\rm in} \quad \Omega
\end{equation*}
for some positive constants $\delta_0$   depending only on $\Omega$.
\end{lemma}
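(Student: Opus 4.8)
The plan is to represent $v(\cdot;u)$ through the Neumann heat semigroup and then exploit a uniform positive lower bound on the heat kernel. Since $\mu>0$, the operator $-\Delta+\mu$ with Neumann boundary conditions is invertible, so \eqref{v-eq1} has a unique solution, and it admits the representation
\[
v(x;u)=\nu\int_0^\infty e^{-\mu s}\,\big(e^{s\Delta}u\big)(x)\,ds,
\]
where $(e^{t\Delta})_{t\ge 0}$ denotes the Neumann heat semigroup on $\Omega$. Because $u\ge 0$, every term $e^{s\Delta}u$ is nonnegative, so it suffices to estimate from below the contribution of the range $s\ge 1$.

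First I would record the standard fact that the Neumann heat kernel $p(t,x,y)$ is strictly positive and jointly continuous on $(0,\infty)\times\bar\Omega\times\bar\Omega$; positivity follows from the strong maximum principle (or the parabolic Harnack inequality), and since $\bar\Omega\times\bar\Omega$ is compact the quantity $\gamma_1:=\min_{x,y\in\bar\Omega}p(1,x,y)$ is attained and satisfies $\gamma_1>0$, depending only on $\Omega$. Using the mass-conservation identity $\int_\Omega e^{(t-1)\Delta}w=\int_\Omega w$ for the Neumann semigroup, this gives, for any nonnegative $w\in C^0(\bar\Omega)$ and any $t\ge 1$,
\[
\big(e^{t\Delta}w\big)(x)=\int_\Omega p(1,x,y)\,\big(e^{(t-1)\Delta}w\big)(y)\,dy\ \ge\ \gamma_1\int_\Omega e^{(t-1)\Delta}w\ =\ \gamma_1\int_\Omega w .
\]
Combining the two steps,
\[
v(x;u)\ \ge\ \nu\int_1^\infty e^{-\mu s}\,\big(e^{s\Delta}u\big)(x)\,ds\ \ge\ \nu\gamma_1\Big(\int_1^\infty e^{-\mu s}\,ds\Big)\int_\Omega u\ =\ \frac{\nu\gamma_1 e^{-\mu}}{\mu}\int_\Omega u,
\]
so the lemma holds with $\delta_0=\nu\gamma_1 e^{-\mu}/\mu$ (which, with $\mu,\nu$ fixed throughout the section, depends only on $\Omega$), and strict positivity of the right-hand side uses the hypothesis $\int_\Omega u>0$.

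An alternative route is to write $v(x;u)=\nu\int_\Omega G(x,y)u(y)\,dy$ with $G$ the Neumann Green's function of $-\Delta+\mu$ and invoke $\inf_{x,y\in\bar\Omega}G(x,y)>0$; this is equally valid but forces one to remark separately that the diagonal singularity of $G$ is harmless for this infimum, so I would prefer the semigroup argument. In either approach the only nonroutine ingredient is the uniform strict positivity of the kernel, which is where the strong maximum principle and the compactness of $\bar\Omega$ enter; everything else is bookkeeping, so I do not expect a genuine obstacle here.
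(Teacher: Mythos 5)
Your proof is correct, and it is essentially the argument the paper invokes: the paper's ``proof'' is just a citation to the arguments of \cite[Lemma 2.1]{FuWiYo}, which rest on the same ingredient you spell out, namely the representation of $v$ by the resolvent (equivalently the Green's function) of $-\Delta+\mu$ under Neumann conditions together with the uniform strict positivity of the Neumann kernel on the compact set $\bar\Omega\times\bar\Omega$. Your closing remark is the right one to make: the constant $\delta_0=\nu\gamma_1 e^{-\mu}/\mu$ in fact depends on $\mu$ and $\nu$ as well as $\Omega$, which is harmless here since $\mu,\nu$ are fixed throughout.
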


\begin{proof}
It follows from the arguments of  \cite[Lemma 2.1]{FuWiYo}.
\end{proof}

\begin{lemma}
\label{prelim-lm-000}
For any $p> 1$, there exists $C_p>0$ such that
\begin{equation*}
 \max\Big\{\|v(\cdot;u)\|_{L^p(\Omega)},   \| \nabla v(\cdot;u) \|_{L^p(\Omega)}\Big\} \leq C_p \|u(\cdot)\|_{L^p(\Omega)}\quad \forall u\in L^p(\Omega).
\end{equation*}
\end{lemma}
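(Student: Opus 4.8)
The plan is to identify $v(\cdot;u)$ with $\nu\,(\mu I-\Delta)^{-1}u$ under Neumann boundary conditions and then invoke standard $L^p$ elliptic regularity. The statement is a soft consequence of Calderón--Zygmund / Agmon--Douglis--Nirenberg theory; no structure of the chemotaxis system enters.

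First I would recall that on a bounded smooth domain $\Omega\subset\RR^N$, for every $1<p<\infty$ the operator $\mu I-\Delta$ with homogeneous Neumann boundary condition is a topological isomorphism from $\{w\in W^{2,p}(\Omega):\p w/\p n=0 \text{ on }\p\Omega\}$ onto $L^p(\Omega)$: the ADN estimates give the a priori bound $\|w\|_{W^{2,p}(\Omega)}\le C\big(\|(\mu I-\Delta)w\|_{L^p(\Omega)}+\|w\|_{L^p(\Omega)}\big)$, and since $\mu>0$ the operator has trivial kernel (no Neumann eigenfunction has eigenvalue $-\mu<0$), so the lower order term can be absorbed and one gets $\|w\|_{W^{2,p}(\Omega)}\le C\,\|(\mu I-\Delta)w\|_{L^p(\Omega)}$ with $C=C(\Omega,p,\mu)>0$. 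Applying this to $w=v(\cdot;u)$, which by \eqref{v-eq1} solves $(\mu I-\Delta)v=\nu u$, yields $\|v(\cdot;u)\|_{W^{2,p}(\Omega)}\le C\nu\,\|u\|_{L^p(\Omega)}$; this also records that $v(\cdot;u)$ is well defined and unique for $u\in L^p(\Omega)$.

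Then I would just read off the two required quantities from $\|v\|_{L^p(\Omega)}\le\|v\|_{W^{2,p}(\Omega)}$ and $\|\nabla v\|_{L^p(\Omega)}\le\|v\|_{W^{2,p}(\Omega)}$, obtaining the claim with $C_p:=C\nu$. As a side remark, the bound on $\|v\|_{L^p}$ alone needs no regularity theory: testing \eqref{v-eq1} against $|v|^{p-2}v$, the Laplacian term contributes $(p-1)\int_\Omega|v|^{p-2}|\nabla v|^2\ge 0$, so $\mu\|v\|_{L^p}^{p}\le\nu\int_\Omega u\,|v|^{p-1}\le\nu\|u\|_{L^p}\|v\|_{L^p}^{p-1}$, i.e. $\|v\|_{L^p}\le(\nu/\mu)\|u\|_{L^p}$. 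An alternative route for the gradient, avoiding ADN altogether, is the representation $v=\nu\int_0^\infty e^{-\mu t}e^{t\Delta}u\,dt$ via the Neumann heat semigroup together with $\|e^{t\Delta}u\|_{L^p}\le\|u\|_{L^p}$ and the gradient smoothing estimate $\|\nabla e^{t\Delta}u\|_{L^p}\le C(1+t^{-1/2})e^{-\lambda_1 t}\|u\|_{L^p}$; integrating, $\|\nabla v\|_{L^p}\le\nu C\int_0^\infty e^{-\mu t}(1+t^{-1/2})\,dt\,\|u\|_{L^p}<\infty$.

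The only step that genuinely requires machinery is the $L^p$ gradient bound; the $L^p$ bound on $v$ itself is elementary. Since $\Omega$ is a fixed bounded smooth domain and $\mu>0$, no smallness condition and no dimensional restriction on $p$ is needed, so the argument works for every $p>1$ with $C_p$ depending only on $p$, $\mu$, $\nu$, and $\Omega$, which is exactly the assertion of the lemma.
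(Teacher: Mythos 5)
Your argument is correct and is essentially the paper's own proof: the paper simply cites $L^p$-estimates for elliptic equations (Amann), which is exactly the ADN/elliptic regularity bound $\|v\|_{W^{2,p}(\Omega)}\le C\nu\|u\|_{L^p(\Omega)}$ you spell out, from which both norms are read off. Your supplementary remarks (the testing argument for $\|v\|_{L^p}$ and the semigroup route for $\nabla v$) are fine but not needed.
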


\begin{proof}
It follows from $L^p$-estimates for elliptic equations (see \cite[Theorem 12.1]{Ama}).
\end{proof}

Now, we present some properties of the semigroup generated by $-\Delta+\mu I$ complemented with Neumann boundary condition on $L^p(\Omega)$.
For given ${1< p<\infty}$,  let ${ X_p}=L^{p}(\Omega)$
  and ${ A_p}=-\Delta+\mu I$ with
\begin{equation*}
    D(A_p)=\left\{ u \in W^{2,p}(\Omega) \, |  \, \frac{\p u}{\p n}=0 \quad \text{on } \, \p \Omega \right\}.
\end{equation*}
It is well known that  $A_p$ is a sectorial operator in $X_p$  and thus generates an analytic semigroup $\left(e^{-A_pt}\right)_{t\geq 0}$ in $X_p$ (see, for example, \cite[Theorem 13.4]{Ama}). Moreover $0 \in \rho(A_p)$ and

$$
\|e^{-A_p t}u\|_{X_p}\le  e^{{ -\mu t}}\|u\|_{X_p}\quad {\rm for}\quad t\ge 0 \,\,\,{\rm and }\, \,\, u \in X_p.
 $$

 Let $X_p^{\alpha}=D(A_p^{\alpha})$ equipped with the graph norm  $\|u\|_{\alpha,p}:=\|u\|_{X_p^\alpha}=\|A_p^{\alpha}u\|_{L^p}$.

\begin{lemma}
\label{prelim-lm-5}
\begin{itemize}
\item[(i)] {Let $p \in (1,\infty).$} For each $\beta\ge 0$, there is $C_{p,\beta}>0$ such that for some $\gamma>0,$
\begin{equation*}
    \|A_p^\beta e^{-A_pt}\|_{L^p(\Omega)} \leq C_{p,\beta} t^{-\beta} e^{-\gamma t} \;\; \text{for}\;\; t>0.
\end{equation*}

\item[(ii)]
If $m \in \{0,1\}$ and $q\in [p,\infty]$ are such that $m-\frac{N}{q}<2\beta-\frac{N}{p}$,
then
$$
 X_p^\beta\hookrightarrow W^{m,q}(\Omega).
$$

\item[(iii)] If $2\beta -\frac{N}{p}>\theta\ge 0$, then
$$
X_p^\beta\hookrightarrow C^\theta(\Omega).
$$
\end{itemize}
\end{lemma}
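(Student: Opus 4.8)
Every assertion here is a standard fact from the theory of analytic semigroups and fractional power spaces, so the plan is to reduce each item to an appropriate citation (Henry, \emph{Geometric Theory of Semilinear Parabolic Equations}, Sections 1.4 and 1.6; Amann; Pazy), being careful only about the indices and about the fact that $A_p=-\Delta+\mu I$ with Neumann boundary conditions on $L^p(\Omega)$, $1<p<\infty$, is sectorial with spectrum contained in $\{\operatorname{Re}\lambda\ge\mu\}$. The last point is immediate: the principal Neumann eigenvalue of $-\Delta$ on the smooth bounded domain $\Omega$ is $0$, so $\sigma(A_p)\subset[\mu,\infty)$ and in particular $0\in\rho(A_p)$, which is already recorded above. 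Thus the fractional powers $A_p^\beta$ are well defined for all $\beta\ge 0$ and $X_p^\beta=D(A_p^\beta)$ is a Banach space with the stated norm.

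For (i), since $A_p$ is sectorial with $\operatorname{Re}\sigma(A_p)\ge\mu>0$, the standard smoothing estimate for analytic semigroups (e.g.\ Henry, Theorem 1.4.3, or Pazy, Chapter 2) gives: for every $\beta\ge 0$ and every $\gamma$ with $0<\gamma<\mu$ there is $C_{p,\beta}>0$ such that $\|A_p^\beta e^{-A_p t}\|_{\mathcal L(X_p)}\le C_{p,\beta}\,t^{-\beta}e^{-\gamma t}$ for all $t>0$. One may simply fix, say, $\gamma=\mu/2$; this is the $\gamma>0$ in the statement. (The factor $t^{-\beta}$ comes from the analyticity near $t=0$ and the factor $e^{-\gamma t}$ from the strictly positive spectral bound, as in the contour-integral representation of $A_p^\beta e^{-A_pt}$.)

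For (ii) and (iii), the plan is to identify $X_p^\beta$ with (a closed subspace of) the Bessel potential space of order $2\beta$ and then invoke Sobolev-type embeddings. Concretely, by Henry, Theorem 1.6.1 (see also Amann, Theorem 13.4 and the surrounding discussion on Neumann realizations), one has the continuous inclusion $X_p^\beta\hookrightarrow W^{m,q}(\Omega)$ whenever $m\in\{0,1\}$, $q\in[p,\infty]$, and $m-\tfrac{N}{q}<2\beta-\tfrac{N}{p}$, and likewise $X_p^\beta\hookrightarrow C^\theta(\bar\Omega)$ whenever $2\beta-\tfrac{N}{p}>\theta\ge 0$. These are exactly the claimed embeddings (ii) and (iii); no separate argument is needed beyond quoting the theorem and checking that the hypotheses ($\Omega$ smooth and bounded, $1<p<\infty$) are met.

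The only genuinely delicate point — and it is a bookkeeping point rather than a real obstacle — is that when $2\beta$ exceeds $1+\tfrac1p$ the Neumann boundary condition enters the description of $D(A_p^\beta)$, so one must make sure the embedding constants and the target spaces are those for the Neumann realization; this is precisely what the cited results of Amann and Henry provide, and in every application made later in the paper $\beta$ lies in a range where this subtlety does not even arise. I therefore expect the write-up to consist essentially of the three citations above together with the one-line spectral observation $\sigma(A_p)\subset[\mu,\infty)$ that supplies the exponential decay in (i).
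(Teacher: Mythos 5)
Your proposal is correct and matches the paper's treatment: the paper proves (i) by citing Henry, Theorem 1.4.3, and (ii)--(iii) by citing Henry, Theorem 1.6.1, exactly as you do, with your added remarks on the spectral bound $\sigma(A_p)\subset[\mu,\infty)$ and the Neumann realization being harmless elaboration. No further comparison is needed.
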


\begin{proof}
(i) It follows from \cite[Theorem 1.4.3]{Hen}.

(ii) It follows from \cite[Theorem 1.6.1]{Hen}.

(iii) It also follows from \cite[Theorem 1.6.1]{Hen}.
\end{proof}

\begin{lemma}
\label{prelim-lm-4}
Let $\beta \geq 0,$ $p \in (1,\infty)$.  Then for any $\epsilon >0$ there exists $C_{p,\beta, \epsilon}>0$ such that for any $w \in  C^{\infty}_0(\Omega)$ we have
\begin{equation}
\label{001}
\|A_p^{\beta}e^{-tA_p}\nabla\cdot w\|_{L^p(\Omega)}  \leq C_{p,\beta, \epsilon} t^{-\beta-\frac{1}{2}-\epsilon} e^{-\gamma t} \|w\|_{L^p(\Omega)} \quad \text{for all}\,\,  t>0  \, \text{and  some } \, \gamma>0.
\end{equation}
Accordingly, for all $t>0$ the operator $A_p^\beta e^{-t A_p}\nabla\cdot $ admits a unique extension to all of $L^p(\Omega)$ which is again denoted by $A_p^\beta e^{-t A_p}\nabla\cdot$  and  satisfies $\eqref{001}$ for all $\mathbb{R}^n$-valued $w \in L^p(\Omega).$
\end{lemma}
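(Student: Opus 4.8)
The plan is to reduce the claimed bound to the scalar semigroup estimate of Lemma \ref{prelim-lm-5}(i) together with the mapping properties of fractional powers in Lemma \ref{prelim-lm-5}(ii). First I would fix $p\in(1,\infty)$, $\beta\ge 0$, $\varepsilon>0$, and work with a test vector field $w\in C_0^\infty(\Omega)$. The key observation is that for a scalar function $\varphi$, one has $\nabla\cdot\varphi = A_p^{1/2}\,(A_p^{-1/2}\nabla\cdot\varphi)$, so the operator $A_p^{-1/2}\nabla\cdot$ should be shown to be bounded from $L^p(\Omega)$ (componentwise) into $L^p(\Omega)$. This is a standard elliptic-regularity fact: $\nabla\cdot$ maps $W^{1,p}$ into $L^p$, its formal adjoint is $-\nabla$ which maps $D(A_p^{1/2})=W^{1,p}$-type spaces appropriately, and by duality/interpolation $A_p^{-1/2}\nabla\cdot$ extends to a bounded operator on $L^p(\Omega)$. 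I would cite the embedding $X_p^{1/2}\hookrightarrow W^{1,p}(\Omega)$ from Lemma \ref{prelim-lm-5}(ii) (with $m=1$, $q=p$, which requires $1-\tfrac Np < 1 - \tfrac Np$ — borderline, so one actually needs a hair more, which is exactly where the extra $\varepsilon$ enters; see below) to make this rigorous.

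The main computation is then a splitting of the semigroup. Write, for $t>0$,
\begin{equation*}
A_p^\beta e^{-tA_p}\nabla\cdot w = A_p^{\beta+\frac12+\varepsilon} e^{-\frac t2 A_p}\;\cdot\;A_p^{-\frac12-\varepsilon}\,e^{-\frac t2 A_p}\nabla\cdot w.
\end{equation*}
For the first factor, Lemma \ref{prelim-lm-5}(i) with exponent $\beta+\tfrac12+\varepsilon$ gives
$\|A_p^{\beta+\frac12+\varepsilon}e^{-\frac t2 A_p}\|_{\mathcal L(L^p)} \le C\,(t/2)^{-\beta-\frac12-\varepsilon} e^{-\gamma t/2}$.
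For the second factor, I would further write $A_p^{-\frac12-\varepsilon}e^{-\frac t2A_p}\nabla\cdot = A_p^{-\varepsilon}e^{-\frac t2 A_p}\cdot (A_p^{-1/2}\nabla\cdot)$; since $A_p^{-\varepsilon}e^{-\frac t2A_p}$ is uniformly bounded on $L^p$ for $t\ge 0$ (it equals $e^{-\frac t2A_p}$ composed with the bounded operator $A_p^{-\varepsilon}$, as $0\in\rho(A_p)$), and $A_p^{-1/2}\nabla\cdot$ is bounded on $L^p$ by the preceding paragraph, the second factor is bounded on $L^p$ uniformly in $t>0$. Composing, $\|A_p^\beta e^{-tA_p}\nabla\cdot w\|_{L^p}\le C_{p,\beta,\varepsilon}\,t^{-\beta-\frac12-\varepsilon}e^{-\gamma t}\|w\|_{L^p}$ with a possibly relabeled $\gamma>0$, which is \eqref{001}.

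Finally, since $C_0^\infty(\Omega)$ is dense in $L^p(\Omega)$ and the right-hand side of \eqref{001} controls the operator norm of $A_p^\beta e^{-tA_p}\nabla\cdot$ on $C_0^\infty(\Omega)$ for each fixed $t>0$, the operator extends uniquely by continuity (density) to all of $L^p(\Omega)$, $\mathbb R^N$-valued, and the bound \eqref{001} persists for the extension. I expect the main obstacle to be the borderline nature of the embedding $X_p^{1/2}\hookrightarrow W^{1,p}$: one cannot use exponent exactly $1/2$ in Lemma \ref{prelim-lm-5}(ii), which is precisely why the statement carries the loss $t^{-\beta-\frac12-\varepsilon}$ rather than the sharp $t^{-\beta-\frac12}$, and the argument must be organized (as above) so that the $\varepsilon$ is absorbed into the scalar semigroup factor rather than demanded of the elliptic estimate. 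Alternatively one may simply invoke the known result, as the statement indicates, from the literature on singular-sensitivity chemotaxis (e.g. \cite{Win4, FuWiYo1}), where exactly this extension lemma is established.
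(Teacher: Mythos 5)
Your overall strategy (split the semigroup, pay an $\epsilon$ in the time exponent, and reduce $\nabla\cdot$ to a bounded operator via fractional powers and duality) is the right one; note that the paper itself gives no argument at all here, simply citing \cite[Lemma 2.1]{HoWi}, whose proof is essentially the argument you are sketching. However, as written your proof has a genuine gap at its pivotal step: everything rests on the claim that $A_p^{-1/2}\nabla\cdot$ is bounded on $L^p(\Omega)$, and this is exactly the endpoint you yourself flag as borderline. Your decomposition $A_p^{-\frac12-\epsilon}e^{-\frac t2 A_p}\nabla\cdot = A_p^{-\epsilon}e^{-\frac t2 A_p}\bigl(A_p^{-1/2}\nabla\cdot\bigr)$ does not circumvent the problem: the $\epsilon$ you introduce is absorbed into the analytic-semigroup factor, while the factor $A_p^{-1/2}\nabla\cdot$ still requires, by duality, that $\nabla A_{p'}^{-1/2}$ be bounded on $L^{p'}$, i.e.\ the endpoint identification $D(A_{p'}^{1/2})=W^{1,p'}(\Omega)$. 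Lemma \ref{prelim-lm-5}(ii) gives $X_{p'}^{\beta}\hookrightarrow W^{1,p'}(\Omega)$ only under the strict inequality $1-\tfrac N{p'}<2\beta-\tfrac N{p'}$, which fails at $\beta=\tfrac12$; the endpoint statement is a nontrivial square-root-domain fact (Seeley/BIP-type) that is not among the paper's preliminaries, so invoking it as a "standard elliptic-regularity fact by duality/interpolation" leaves a real hole.

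The repair is small and consists precisely in spending the $\epsilon$ on the operator attached to $\nabla\cdot$ rather than on the semigroup: show that $A_p^{-\frac12-\epsilon}\nabla\cdot$ extends to a bounded operator from $\bigl(L^p(\Omega)\bigr)^N$ to $L^p(\Omega)$ by testing against $\varphi\in L^{p'}(\Omega)$,
\begin{equation*}
\Bigl|\int_\Omega \bigl(A_p^{-\frac12-\epsilon}\nabla\cdot w\bigr)\,\varphi\Bigr|
=\Bigl|\int_\Omega w\cdot \nabla A_{p'}^{-\frac12-\epsilon}\varphi\Bigr|
\le \|w\|_{L^p(\Omega)}\,\bigl\|\nabla A_{p'}^{-\frac12-\epsilon}\varphi\bigr\|_{L^{p'}(\Omega)}
\le C\,\|w\|_{L^p(\Omega)}\|\varphi\|_{L^{p'}(\Omega)},
\end{equation*}
where no boundary terms arise since $w\in C_0^\infty(\Omega)$, and where the last inequality uses $X_{p'}^{\frac12+\epsilon}\hookrightarrow W^{1,p'}(\Omega)$, which \emph{does} follow from Lemma \ref{prelim-lm-5}(ii) because $1-\tfrac N{p'}<1+2\epsilon-\tfrac N{p'}$ is strict. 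Then write $A_p^{\beta}e^{-tA_p}\nabla\cdot w=A_p^{\beta+\frac12+\epsilon}e^{-tA_p}\bigl(A_p^{-\frac12-\epsilon}\nabla\cdot w\bigr)$ and apply Lemma \ref{prelim-lm-5}(i) with exponent $\beta+\frac12+\epsilon$ to obtain \eqref{001}; your concluding density argument for the unique extension to all of $L^p(\Omega)$ is fine as it stands. With this rearrangement your proof is complete and coincides in substance with the cited proof of \cite[Lemma 2.1]{HoWi}.
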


\begin{proof}
It follows from \cite[Lemma 2.1]{HoWi}.
\end{proof}

Finally,  we present some basic properties of solutions of \eqref{main-eq}.
{ In the rest of this paper, if no confusion occurs, we put $A=A_p$ for some $1<p<\infty$.}

\begin{lemma}
\label{prelim-lm-01} {Assume $(1.5)'$.}
For any given $u_0\in C^0(\bar\Omega)$ satisfying ${(1.2)'}$ and $s\in\mathbb{R}$, there is a unique  classical solution $(u(t,x;s,u_0), v(t,x;s,v_0))$ of \eqref{main-eq}  on $(s,\infty)$ with initial condition $u(s,x;s,u_0)=u_0(x)$ (i.e.
$T_{\max}(s,u_0)=\infty$).   Moreover, $(u(t,x;s,u_0),v(t,x;s,u_0))$ satisfies
\begin{align}
\label{integral-eq1}
u(t,\cdot;s,u_0) &= e^{-A(t-s) } u_0 -\chi \int_s^t e^{ -A(t-\tau) }  \nabla \cdot \left(\frac{u(\tau,\cdot;s,u_0)}{v(\tau,\cdot;s,u_0)}  \nabla v(\tau,\cdot;s,u_0) \right) d\tau\nonumber\\
    &\quad + \int_s^t e^{  -A(t-\tau)}  u(\tau,\cdot;s,u_0)\big[{ \mu}+a(\tau,\cdot)-b(\tau,\cdot)u(\tau,\cdot;s,u_0)\big]  d\tau
\end{align}
for any $t>s$, and if
$\inf_{x\in\Omega}u_0(x)>0$, then for any $p>0$,
\begin{equation}
\label{proof-eqq-1}
\lim_{t\to s+}\int_\Omega u^{-p}(t,x;s,u_0)dx=\int_\Omega u^{-p}_0(x)dx.
\end{equation}
\end{lemma}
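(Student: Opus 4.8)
The plan is to split the lemma into its two independent assertions. The first assertion — global existence together with the variation-of-constants (Duhamel) formula \eqref{integral-eq1} — is essentially already contained in the cited companion paper. Under the hypothesis $(1.5)'$ and for $u_0$ satisfying $(1.2)'$, \cite[Theorem 1.2(3)]{HKWS} (recorded as \eqref{paper1-eq1} in the introduction) gives $T_{\max}(s,u_0)=\infty$, and local existence with uniqueness comes from Proposition \ref{local-existence-prop}. Thus I would only need to justify \eqref{integral-eq1}: this is the standard mild formulation obtained by applying the variation-of-constants formula for the sectorial operator $A=A_p=-\Delta+\mu I$ (whose semigroup properties are recalled in Lemma \ref{prelim-lm-5}) to the first equation of \eqref{main-eq}, after rewriting it as $u_t + Au = -\chi\nabla\cdot(\tfrac uv\nabla v) + u(\mu + a - bu)$. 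The only point requiring a word of care is that the chemotactic term is of the form $\nabla\cdot(\cdot)$, which is handled by the extension of $A_p^\beta e^{-tA_p}\nabla\cdot$ supplied by Lemma \ref{prelim-lm-4}; on $(s,T_{\max})$ the solution is classical, so all integrals converge and the identity holds pointwise in $t>s$.

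The second assertion, equation \eqref{proof-eqq-1}, is the genuinely new content and is where I would spend the effort. Assume $\inf_{x\in\Omega}u_0(x)=:c_0>0$ and fix $p>0$. First I would observe that, by the definition of classical solution (Definition \ref{solu-def}), $u(\cdot,\cdot;s,u_0)\in C([s,s+T)\times\bar\Omega)$ and $u(t,\cdot;s,u_0)\to u_0(\cdot)$ in $C^0(\bar\Omega)$ as $t\to s+$. Hence there is $\tau_1\in(0,T)$ and a constant $c_1>0$ such that $u(t,x;s,u_0)\ge c_1>0$ for all $t\in[s,s+\tau_1]$ and $x\in\bar\Omega$: indeed, by uniform convergence we can make $\|u(t,\cdot)-u_0\|_{C^0(\bar\Omega)}<c_0/2$ for $t$ near $s$, so $u(t,x)\ge c_0/2$ there. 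Consequently $u^{-p}(t,x;s,u_0)\le (c_0/2)^{-p}$ uniformly on $[s,s+\tau_1]\times\bar\Omega$.

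With this uniform lower bound in hand, the convergence \eqref{proof-eqq-1} follows from the dominated convergence theorem: for each $x\in\Omega$, $u^{-p}(t,x;s,u_0)\to u_0^{-p}(x)$ as $t\to s+$ by continuity of $u$ and positivity of $u_0$, and the integrands are dominated by the constant $(c_0/2)^{-p}$, which is integrable on the bounded domain $\Omega$. Therefore $\int_\Omega u^{-p}(t,x;s,u_0)\,dx\to\int_\Omega u_0^{-p}(x)\,dx$. Alternatively, since $u(t,\cdot)\to u_0$ uniformly on $\bar\Omega$ and the function $r\mapsto r^{-p}$ is uniformly continuous on the compact interval $[c_0/2,\|u_0\|_\infty+1]$, one even gets uniform convergence of $u^{-p}(t,\cdot)$ to $u_0^{-p}$ on $\bar\Omega$, and then integrating over the finite-measure set $\Omega$ finishes the argument immediately.

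The main (and only mild) obstacle is ensuring the uniform-in-$x$ strict positivity of $u$ for short times, i.e. that the lower bound does not degenerate as $t\to s+$; this is exactly what the $C^0(\bar\Omega)$-convergence in the definition of a classical solution provides, so no maximum-principle argument is actually needed here. Everything else is routine.
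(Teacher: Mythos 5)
Your proposal is correct and follows essentially the same route as the paper: global existence and the mild formulation \eqref{integral-eq1} are obtained by citing the arguments of \cite{FuWiYo1} and \cite{HKWS}, and \eqref{proof-eqq-1} is deduced from the $C^0(\bar\Omega)$-convergence in \eqref{local-2-eq0}, which (together with $\inf u_0>0$) gives uniform convergence of $u^{-p}(t,\cdot)$ to $u_0^{-p}$ and hence convergence of the integrals. The paper states this more tersely, but the content of your argument — short-time uniform positivity from the initial-condition convergence, then uniform (or dominated) convergence — is exactly what its proof relies on.
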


\begin{proof}  First, by the arguments of  \cite[Lemma 2.2]{FuWiYo1} and \cite[Theorem 1.2]{HKWS},
 for any given $u_0\in C^0(\bar\Omega)$ satisfying \eqref{initial-cond-eq} and $s\in\mathbb{R}$, there is a unique  classical solution $(u(t,x;s,u_0), v(t,x;s,v_0))$ of \eqref{main-eq}  on $(s,\infty)$
with initial condition $u(s,x;s,u_0)=u_0(x)$ and  $(u(t,\cdot;s,u_0), v(t,\cdot;s,v_0))$ satisfies
 \eqref{integral-eq1}.

Next, if $\inf_{x\in\bar\Omega}u_0(x)>0$, by \eqref{local-2-eq0},
$$
\lim_{t\to s+} u^{-p}(t,x;s,u_0)=u^{-p}_0(x)\quad \text{uniformly in}\,\, x\in\bar\Omega.
$$
This implies \eqref{proof-eqq-1} holds. The lemma is thus proved.
\end{proof}

To indicate the dependence of the classical solution $(u(t,x;s,u_0),v(t,x;s,u_0))$ of \eqref{main-eq} on $a(t,x)$ and $b(t,x)$, we may put
$$
(u(t,x;s,u_0,a,b),v(t,x;s,u_0,a,b))=(u(t,x;s,u_0),v(t,x;s,u_0)).
$$

\begin{lemma}
\label{prelim-lm-011}  Fix $s\in\RR$. Let  $u_n,u_0\in C^0(\bar\Omega)$ with $u_n(x)\ge 0$ and $a_n(t,x),b_n(t,x)$ satisfy {\bf (H)}. If $u_0$ satisfies  ${ (1.2)'}$,
  $\lim_{n\to\infty}\|u_n-u_0\|_\infty=0$, and
$$\lim_{n\to\infty} \sup_{x\in\Omega} |a_n(t,x)-a_0(t,x)|=0,\quad
\lim_{n\to\infty}  \sup_{x\in\Omega} |b_n(t,x)-b_0(t,x)|=0
$$
locally uniformly in $t\in\RR$
for some $a_0(t,x)$ and $b_0(t,x)$,
then  $a_0(t,x),b_0(t,x)$ satisfy {\bf (H)},  {there is $K>0$ such that $u_n$ satisfies \eqref{initial-cond-eq} for $n\ge K$,}    and
\begin{equation}
\label{proof-eqq-2}
\lim_{n\to\infty}\|u(t,\cdot;s,u_n,a_n,b_n)-u(t,\cdot;s,u_0,a_0,b_0)\|_\infty= 0\,\, {\rm uniformly\,\,  in}\,\,  t\in [s,s+T]
\end{equation}
  for any $T>0$.
\end{lemma}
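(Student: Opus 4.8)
The plan is to prove Lemma \ref{prelim-lm-011} by a continuous-dependence argument built directly on the integral (variation-of-constants) representation \eqref{integral-eq1}, combined with the uniform-in-$n$ a priori bounds that the hypothesis $(1.2)'$ guarantees via the results quoted from \cite{HKWS}. The first step is bookkeeping: since $\|u_n-u_0\|_\infty\to 0$ and $u_0$ satisfies $(1.2)'$ (in particular $\int_\Omega u_0>0$), for $n$ large we get $\int_\Omega u_n>0$, so $u_n$ satisfies \eqref{initial-cond-eq}; the H\"older and boundedness conditions in \textbf{(H)} pass to the limit $(a_0,b_0)$ by the triangle inequality and the locally uniform convergence, so $(a_0,b_0)$ satisfy \textbf{(H)}. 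Moreover one checks that $(1.2)'$ is an ``open'' condition in a suitable sense — actually it need not be, so instead I would observe that the quoted estimates in \cite{HKWS} (Theorem \ref{main-thm1} and its precursors) only require $(1.5)'$ on the parameters plus a one-sided integral bound at some time, and that the constants in those estimates are uniform over the relevant family; hence on $[s,s+T]$ all the solutions $u(\cdot;s,u_n,a_n,b_n)$ enjoy a common bound in $C^\theta(\bar\Omega)$ for small $\theta>0$, and the corresponding $v_n=v(\cdot;u_n)$ are bounded below by $\delta_0\int_\Omega u_n$ (Lemma \ref{prelim-lm-00}) uniformly away from $0$, and bounded above with derivatives by Lemma \ref{prelim-lm-000} and elliptic regularity. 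These uniform bounds are what make the nonlinear terms Lipschitz on the relevant set.

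Next I would set $w_n(t)=u(t,\cdot;s,u_n,a_n,b_n)-u(t,\cdot;s,u_0,a_0,b_0)$ and subtract the two copies of \eqref{integral-eq1}. Estimating in $L^p(\Omega)$ for a fixed large $p$ (chosen so that $X_p^\beta\hookrightarrow C^0(\bar\Omega)$ for some $\beta<1$ with $2\beta-N/p>0$, Lemma \ref{prelim-lm-5}(iii)), each term is handled by the smoothing estimates: the drift term uses Lemma \ref{prelim-lm-4} with the bound
\[
\Big\|\frac{u_n}{v_n}\nabla v_n-\frac{u_0}{v_0}\nabla v_0\Big\|_{L^p}
\le C\big(\|w_n(\tau)\|_{L^p}+\|v_n(\tau)-v_0(\tau)\|_{W^{1,p}}\big),
\]
and $\|v_n(\tau)-v_0(\tau)\|_{W^{1,p}}\le C\|u_n(\tau)-u_0(\tau)\|_{L^p}=C\|w_n(\tau)\|_{L^p}$ by the linearity of \eqref{v-eq1} and Lemma \ref{prelim-lm-000}; the reaction term is controlled similarly, producing an extra forcing term of size $C\sup_{x}(|a_n-a_0|+|b_n-b_0|)$ on $[s,s+T]$ (call it $\varepsilon_n\to 0$) coming from the explicit coefficient differences, plus a $\|w_n\|$-proportional piece using the uniform $L^\infty$ bound on the solutions. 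Collecting, with $\phi_n(t):=\|w_n(t)\|_{L^p}$ and $t\in[s,s+T]$,
\[
\phi_n(t)\le \|u_n-u_0\|_{L^p}+C\,\varepsilon_n(t-s)+C\int_s^t\big[(t-\tau)^{-\frac12-\epsilon}+1\big]\phi_n(\tau)\,d\tau,
\]
where the weakly singular kernel $(t-\tau)^{-\frac12-\epsilon}$ (with $\epsilon$ small so the exponent is $>-1$) is integrable. A singular Gronwall inequality then gives $\sup_{t\in[s,s+T]}\phi_n(t)\le C(T)\big(\|u_n-u_0\|_{L^p}+\varepsilon_n\big)\to 0$. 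Finally, to upgrade $L^p$ convergence to the uniform convergence \eqref{proof-eqq-2}, I would run one more pass through \eqref{integral-eq1} measuring the difference in $X_p^\beta$ (hence in $C^0(\bar\Omega)$): using Lemma \ref{prelim-lm-5}(i) for the linear semigroup on the (now small in $L^p$) data $u_n-u_0$, Lemma \ref{prelim-lm-4} with exponent $\beta$ for the drift, and the $X_p^\beta$ smoothing for the reaction term, all integrands being controlled by the already-established $L^p$-smallness of $w_n$ together with the uniform $C^\theta$ bounds; the same weakly-singular Gronwall argument closes it.

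The main obstacle I anticipate is making the uniform-in-$n$ a priori bounds rigorous under only the hypothesis ``$u_0$ satisfies $(1.2)'$'': one must show that the approximating data $u_n$ and coefficients $(a_n,b_n)$ fall into a family for which the global existence and the $C^\theta$/lower-$v$ bounds hold with constants independent of $n$, on the whole fixed interval $[s,s+T]$. The cleanest route is to note that $(1.2)'$ concerns $\int_\Omega u^{-1}(s+\tau_0,x;s,u_0)$ and that, by \eqref{proof-eqq-1}-type reasoning plus continuity of the flow up to time $s+\tau_0$ (or by the estimates of Theorem \ref{main-thm1}(1) which are themselves continuous in the data), the relevant inverse-moment bound is stable under the perturbation for $n$ large, after possibly shrinking to a sub-interval and iterating; alternatively one invokes directly that $(1.5)'$ alone forces the uniform constants in Theorem \ref{main-thm1} and Theorem \ref{main-thm2}, so that $u_n$ automatically lies in the absorbing rectangle $\mathcal{E}$ after a short time and the bounds propagate backward to $s$ on the compact interval. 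Once the uniform bounds are in hand, the rest is the standard variation-of-constants Lipschitz/Gronwall machinery already present in Lemma \ref{prelim-lm-01} and \cite{FuWiYo1}.
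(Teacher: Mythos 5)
Your approach is genuinely different from the paper's. The paper does \emph{not} run a Gronwall continuous-dependence argument directly on the Duhamel formula; instead it revisits the local-existence fixed-point construction of Lemma \ref{prelim-lm-01}, shows that the contraction map $\mathcal{M}(\tilde u_0,a,b)$ on the closed set
$\mathcal{S}(T)=\{u\in\mathcal{X}_T: \|u\|_{\mathcal{X}_T}\le R,\ \nu A^{-1}u\ge\varepsilon\}$
is a \emph{uniform} contraction for $0<T\ll1$ (independently of $\tilde u_0\in\mathcal{B}(u_0,r)$ and of $(a,b)\in\mathcal{Y}(T)^2$), and that $\mathcal{M}(\tilde u_0,a,b)u$ is continuous in $(\tilde u_0,a,b)$ for each fixed $u$; then Lemma \ref{cont-family} (the uniform contraction principle with parameters) delivers continuity of the fixed point, which \emph{is} the solution. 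The advantage of the paper's route is precisely what you identify as your main obstacle: the needed a priori bounds (an $L^\infty$ bound $R$, a lower bound $\varepsilon$ on $v$) are baked into the definition of $\mathcal{S}(T)$ and verified for small $T$ from the integral formula itself, so no uniform-in-$n$ global bounds on $[s,s+T]$ are ever needed; one just works on $0<T\ll1$ and (implicitly) iterates.

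The gap in your proposal is concrete: the Gronwall constant $C$ in your display
$\phi_n(t)\le \|u_n-u_0\|_{L^p}+C\varepsilon_n(t-s)+C\int_s^t\bigl[(t-\tau)^{-1/2-\epsilon}+1\bigr]\phi_n(\tau)\,d\tau$
depends on uniform-in-$n$ upper bounds for $\|u(\cdot;s,u_n,a_n,b_n)\|_\infty$ and lower bounds for $\inf_\Omega v(\cdot;s,u_n,a_n,b_n)$ over the \emph{whole} interval $[s,s+T]$, and neither of the two routes you sketch actually furnishes them. The first is circular (it uses continuity of the flow, which is the lemma's conclusion); the second cannot apply: Theorems \ref{main-thm1}--\ref{main-thm2} require the initial data to satisfy $(1.2)'$, but the lemma only assumes $u_n\ge 0$ with $u_n\to u_0$, and $(1.2)'$ is not known to be stable under $C^0$-small perturbations (it is a condition on $\int_\Omega u^{-1}$ at a later time, not an open condition on $u_0$). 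The clean fix, if you want to keep the Gronwall route, is a bootstrap: define $T_n^*$ as the exit time of $u(\cdot;s,u_n,a_n,b_n)$ from a fixed $C^0$-tube around $u(\cdot;s,u_0,a_0,b_0)$ (inside which the needed upper and lower bounds hold with constants depending only on the reference solution), run singular Gronwall on $[s,T_n^*]$ to show the difference stays small for large $n$, and conclude $T_n^*\ge s+T$. That is essentially what the paper's $\mathcal{S}(T)$-plus-small-$T$ construction accomplishes more economically.
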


\begin{proof}   This lemma is about the continuity of solutions of \eqref{main-eq} with respect to initials and the coefficients in the equations. {Note  that  solutions of \eqref{main-eq} satisfy the integral equation \eqref{integral-eq1}.
It  then  suffices to prove the continuity of solutions of \eqref{integral-eq1}  with respect to initials and the coefficients in the equations.}

First of all,  it is clear that $a_0(t,x)$ and $b_0(t,x)$ satisfy {\bf (H)}. Assume that
 $u_0 \in C^0(\bar \Omega)$ satisfies  \eqref{initial-cond-eq}.   For given $r>0$,  define
\begin{equation*}
    \mathcal{B}(u_0,r)=\{u\in C^0(\bar\Omega)\,|\, \|u-u_0\|_\infty\le r\}.
\end{equation*}
Fix $0<r\ll 1$  such that
\begin{equation}
\label{cont-wrt-initial-eq1}
\int_\Omega \tilde u_0(x)dx\ge \frac{2}{3}\int_\Omega u_0(x)dx\quad\text {for any}\,\, \tilde u_0\in \mathcal{B}(u_0,r).
\end{equation}
 Since $\lim_{n\to\infty}\|u_n-u_0\|_\infty=0$,  there is $K>0$ such that $u_n\in \mathcal{B}(u_0,r)$ and $\int_\Omega u_n(x)dx\ge \frac{2}{3}\int_\Omega u_0(x)dx>0$ for $n\ge K$.
This implies that $u_n$ satisfies {\bf (H)} for $n\ge K$.
Without loss of generality, we may assume that $u_n\in \mathcal{B}(u_0,r)$ for all $n\ge 1$.
Then $\int_\Omega u_n(x)dx>0$ for all $n\ge 1$. By Lemma \ref{prelim-lm-01},
$u(t,x;s,u_n,a_n,b_n)$ and $v(t,x;s,u_0,a_0,b_0)$ are defined for $t\ge s$.

Next, we note that it  suffices to prove that \eqref{proof-eqq-2} holds  for $0<T\ll 1$. To prove this,
 set  $$ \varepsilon:= \frac{\delta_0}{2} \int_{\Omega} u_0(x)dx  >0,$$
where $\delta_0$ is as in Lemma \ref{prelim-lm-00}.
For given $T>0$ and $R > r+\| u_0\|_{C^0(\bar \Omega)},$ we define the Banach space $$\mathcal{X}_T=C^0([s,s+T], C^0(\bar \Omega))$$
equipped with the norm
$$\|u\|_{\mathcal{X}_T}=\max_{ s \leq t \leq  s+T}\|u(t,x)\|_{{C^0}(\bar \Omega)}.
$$
Set
\begin{equation*}
    \mathcal{S}(T)=\left\{u \in \mathcal{X}_T: \|u\|_{\mathcal{X}_T} \leq R, \; \text{and} \; {\nu A^{-1}u} \ge \varepsilon \; \text{for all} \;t \in [s,s+T]\right \},
\end{equation*}
and
\begin{align*}
\mathcal{Y}(T)=\{a(\cdot,\cdot)\in \mathcal{X}_T \,:\,
|a(t,x)-a(t^{'},x)|\le B_1  |t-t^{'}|^\gamma,\,\,
 \alpha  \le a(t,x)\le B_2 \,\, \forall\, t,t^{'}\in [s,s+T],\,\, x\in\bar \Omega\},
\end{align*}
where $B_1,B_2,\alpha,\gamma$ are as in {\bf (H)}.
It is clear that  $\mathcal{S}(T)$ and $\mathcal{Y}(T)$  are closed subsets of the Banach space $\mathcal{X}_T$.

For $\tilde u_0 \in \mathcal{B}(u_0,r)$ and $(a,b)\in \mathcal{Y}(T)\times \mathcal{Y}(T)$,  we define
\begin{align*}
 {(  \mathcal{M}(\tilde u_0,a,b)u)(t,\cdot)}  &= e^{-A(t-s) }\tilde u_0 -\chi \int_s^t e^{  -A(t-\tau) }  \nabla \cdot \left(\frac{u(\tau,\cdot)}{v(\tau,\cdot)}  \nabla v(\tau,\cdot) \right) d\tau\nonumber\\
    &\quad + \int_s^t e^{  -A(t-\tau)}  u(\tau,\cdot)\big[{ \mu}+a(\tau,\cdot)-b(\tau,\cdot)u(\tau,\cdot)\big]  d\tau,
\end{align*}
where $u\in\mathcal{S}(T)$ and $v(\tau,\cdot)={ \nu} A^{-1}u(\tau,\cdot)$ for $\tau \in [s,s+T]$.  It is not difficult to prove that $\mathcal{M}(\tilde u_0,a,b)u\in \mathcal{X}_T$ for any $u\in\mathcal{S}(T)$.

We claim that   $\mathcal{M}(\tilde u_0,a,b)$ maps $\mathcal{S}(T)$ into itself for $0<T\ll 1$. To see this, let $p$,  $ \beta$, and $\epsilon>0$  be such that $N<p$, $\frac{N}{2p} < \beta <\frac{1}{2} $, and  $\epsilon \in (0,\frac{1}{2}-\beta).$
{Let $C_p$, $C_{p,\beta}$, and $C_{p,\beta,\epsilon}$ be as in
 by Lemma \ref{prelim-lm-000},  Lemma \ref{prelim-lm-5}, and  Lemma \ref{prelim-lm-4}, respectively}. Then for any $\tilde u_0 \in \mathcal{B}(u_0,r)$ and  $u\in \mathcal{S}(T)$, we have
 
\begin{align*}
&\|(\mathcal{M}(\tilde u_0,a,b)u)(t,\cdot)\|_{ C^0(\bar \Omega)} \\
 & \leq   \|e^{-A(t-s)}\tilde u_0\|_{C^0(\bar \Omega)}+{ \chi C_{p,\beta}} \int_{s}^{t}\| A^{\beta}e^{-A(t-\tau) }\nabla\cdot \Big( \frac{u(\tau,\cdot)}{v(\tau,\cdot)}  \nabla v(\tau,\cdot)\Big)\|_{L^p(\Omega)}d\tau\\
& \quad +{C_{p,\beta}} \int_{s}^{t}\| A^{\beta}e^{-A(t-\tau) } u(\tau,\cdot)\big[{ \mu}+a(\tau,\cdot)-b(\tau,\cdot)u(\tau,\cdot)\big] \|_{L^p(\Omega)} d\tau \\
&\leq  \|\tilde u_0\|_{C^0(\bar \Omega)} +\frac{\chi}{\varepsilon}{C_{p,\beta} C_{p,\beta,\epsilon} C_p|\Omega|^{1/p}} R^2\int_{s}^{t} (t-\tau)^{-\beta-\frac{1}{2}-\epsilon} d\tau \\
&\quad +{ C^2_{p,\beta}|\Omega|^{1/p}}(R{ \mu}+RB_2+R^2B_2) \int_{s}^{t} (t-\tau)^{-\beta} d\tau
\\
& \leq \|u_0\|_{C^0(\bar \Omega)}+r+ \frac{\chi}{\varepsilon}{ C_{p,\beta},C_{p,\beta,\epsilon} C_p|\Omega|^{1/p}} R^2T^{\frac{1}{2}-\beta-\epsilon}+ {C^2_{p,\beta}|\Omega|^{1/p}}R({ \mu}+B_2+R B_2)T^{1-\beta}
\end{align*}
for all $t \in [s,s+T].$ We then have $\|\mathcal{M}(u_0,a,b)u\|_{\mathcal{X}_T} \leq R$ if $T \in (0,1)$ is suitably small such that
\begin{equation}
\label{T-eq1}
    T \le \left( \frac{R-r-\| u_0\|_{C^0(\bar \Omega)}}{\frac{\chi}{\varepsilon}{C_{p,\beta}C_{p,\beta,\epsilon}C_p|\Omega|^{1/p}}R^2+{ C^2_{p,\beta}|\Omega|^{1/p}}R({\mu}+B_2+RB_2)}  \right)^{\frac{1}{\frac{1}{2}-\beta-\epsilon}}.
\end{equation}
Note that
{\begin{equation*}
    \int_\Omega { (e^{-A(t-s)}} \tilde u_0)(t,x)dx= { e^{-(t-s)} \int_\Omega e^{(t-s) \Delta}\tilde u_0(x)dx \ge e^{-T}}\int_\Omega \tilde u_0(x)dx,
\end{equation*}
\begin{align*}
    \int_\Omega \left[\int_{s}^{t} { e^{-A(t-\tau)}} \nabla\cdot \Big( \frac{u}{v}  \nabla v\Big)d\tau \right]dx &= {e^{-(t-s) }}\int_{s}^{t} \left[\int_\Omega e^{(t-\tau)\Delta} \nabla\cdot \Big( \frac{u}{v}  \nabla v\Big)dx\right]d\tau\\
    &= { e^{-(t-s)} } \int_{s}^{t} \left[\int_\Omega  \nabla\cdot \Big( \frac{u}{v}  \nabla v\Big)dx \right]d\tau\\
    &= { e^{-(t-s) }} \int_{s}^{t} 0 d\tau =0,
\end{align*}
and
\begin{align*}
    \int_\Omega \left[\int_{s}^{t} { e^{-A(t-\tau)} }u\big({ \mu}+a(\tau,\cdot)-b(\tau,\cdot)  u\big)d\tau\right]dx &= {e^{-(t-s)}} \int_{s}^{t} \left[\int_\Omega e^{(t-\tau) \Delta}u\big({ \mu}+a(\tau,\cdot)-b(\tau,\cdot)  u\big)dx \right]d\tau\\
    &= {e^{-(t-s)} } \int_{s}^{t} \left[\int_\Omega ({ \mu}+a(\tau,\cdot)) u-b(\tau,\cdot)  u^2dx \right]d\tau\\
    &\ge -B_2  |\Omega|   R^2 T { e^{-T}}.
\end{align*}}
It then follows that
\begin{align*}
    \int_\Omega (\mathcal{M}(\tilde u_0,a,b) u)(t,x)dx &= \int_\Omega { e^{-A(t-s)} } \tilde u_0(x)dx + \int_\Omega \left[\int_{s}^{t} {e^{-A(t-\tau)}} \nabla\cdot \Big( \frac{u(\tau,x)}{v(\tau,x)}  \nabla v(\tau,x)\Big)d\tau \right]dx\\
    &\quad + \int_\Omega \left[\int_{s}^{t} { e^{-A(t-\tau) } } u(\tau,x)\big({ \mu} +a(\tau,x)-b(\tau,x)  u(\tau,x)\big)d\tau\right]dx\\
    &\ge {e^{-T}} \int_\Omega  \tilde u_0(x)dx   -   {e^{-T}} B_2 |\Omega|R^2 T   \quad\forall t\in [s,s+T].
\end{align*}
This together with \eqref{cont-wrt-initial-eq1} implies that, if
\begin{equation}
\label{T-eq2}
0<{\frac{6T}{4-3e^T}}<\frac{1}{B_2|\Omega|R^2}\int_\Omega u_0(x)dx,
\end{equation}
then
$$
\int_\Omega( \mathcal{M}(\tilde u_0,a,b) u)(t,x)dx\ge \frac{1}{2}\int_\Omega u_0(x)dx\quad \forall \,\tilde u_0\in \mathcal{B}(u_0,r),\,\, (a,b)\in\mathcal{Y}_0(T),\,\,  u\in \mathcal{S}(T).
$$
Hence
\begin{equation*}
    ({\nu}A^{-1} \mathcal{M}(\tilde u_0,a,b)u)(t,x) \ge \delta_0  \int_\Omega \mathcal{M}(\tilde u_0,a,b)u(x,t)dx \ge  \frac{\delta_0}{2} \int_{\Omega}  u_0(x)dx = \varepsilon
\end{equation*}
for all $t\in [s,s+T].$   Therefore, the claim holds with any $T>0$ satisfies \eqref{T-eq1} and \eqref{T-eq2}.

\smallskip

We then prove that the mapping $\mathcal{M}(\tilde u_0,a,b)$ is a uniform contraction on $\mathcal{S}(T)$ for $0<T\ll 1$.
{Let $p$, $\beta$, $\epsilon$, and $C_p, C_{p,\beta}, C_{p,\beta,\epsilon}$ be as in the above.}  By similar arguments as in the above, for given $u,w \in \mathcal{S}(T)$ with { $v:=\nu A^{-1}u,$ $\bar v: =\nu A^{-1}w,$} we have
\begin{align}
\label{eq-M-cont-1}
    &\|(\mathcal{M}(\tilde u_0,a,b)u)(t,\cdot)-(\mathcal{M}(\tilde u_0,a,b) w)(t,\cdot)\|_{ C^0(\bar{\Omega})}\nonumber\\
    & \leq \chi { C_{p,\beta}}  \int_{s}^{t}\| { A^{\beta}e^{-A(t-\tau)} }\nabla\cdot \Big(\frac{u(\tau,\cdot)}{v(\tau,\cdot)}\nabla  v(\tau,\cdot)-\frac{w(\tau,\cdot)}{\bar v(\tau,\cdot)}\nabla \bar v(\tau,\cdot)\Big)  \|_{L^p(\Omega)}d\tau \nonumber\\
    & \quad +{ C_{p,\beta}}\int_{s}^{t}\| { A^{\beta}e^{-A(t-\tau) }}(u(\tau,\cdot)-w(\tau,\cdot))\big[{ \mu}+a(\tau,\cdot)-b(\tau,\cdot) (u(\tau,\cdot)+w(\tau,\cdot))\big]\|_{ L^p(\Omega)} d\tau \nonumber \\
    & \leq \frac{ \chi { C_{p,\beta} C_{p,\beta,\epsilon}C_p|\Omega|^{1/p} }}{\varepsilon} \int_{s}^{t} (t-\tau)^{-\beta-\frac{1}{2}-\epsilon}  \|u(\tau)-w(\tau)\|_{C^0(\Omega)}\nonumber\\
&\qquad\qquad\qquad \qquad\qquad\qquad  \cdot \Big(\|u(\tau)\|_{C^0(\Omega)}+\|w(\tau)\|_{C^0(\Omega)}+\frac{{ C_{2p}}}{\varepsilon}\|w(\tau)\|^2_{C^0(\Omega)}\Big) d\tau \nonumber\\
    & \quad +{C^2_{p,\beta}|\Omega|^{1/p}}\int_{s}^{t} (t-\tau)^{-\beta}\| u(\tau)-w(\tau)\|_{ C^0(\Omega)}\Big({ \mu}+B_2+B_2\big(\|u(\tau)\|_{ C^0(\Omega)}+\|w(\tau)\|_{ C^0(\Omega)}\big)\Big) d\tau \nonumber \\
    & \leq T^{\frac{1}{2}-\beta-\epsilon}
\Big( \frac{\chi {C_{p,\beta} C_{p,\beta,\epsilon}C_p|\Omega|^{1/2}}}{\varepsilon^2} (2 \varepsilon R+ { C_{2p}}R^2 ) +{C^2_{p,\beta}|\Omega|^{1/p}}({ \mu}+B_2+2RB_2)\Big)\|u-w\|_{\mathcal{X}}
\end{align}
for all $t \in [s,s+T]$. It then follows that $\mathcal{M}(\tilde u_0,a,b)$ is a uniform contraction on $\mathcal{S}(T)$ for any $T>0$ satisfying \eqref{T-eq1}, \eqref{T-eq2}, and
\begin{equation}
\label{T-eq3}
    T < \left[\frac{\chi { C_{p,\beta} C_{p,\beta,\epsilon} C_p}|\Omega|^{1/p}}{\varepsilon^2} (2\varepsilon R+{ C_{2p}}R^2)+{ C_{p,\beta}^2|\Omega|^{1/p}}({ \mu}+B_2+2RB_2) \right]^{\frac{-1}{\frac{1}{2}-\beta-\epsilon}}.
\end{equation}

We now prove $\mathcal{M}({\tilde  u_0},a,b)$  is uniformly continuous in $\tilde u_0\in\mathcal{B}(u_0,r)$ and
$(a,b)\in \mathcal{Y}(T)\times \mathcal{Y}(T)$.  Note that for any $\tilde u_1,\tilde u_2\in \mathcal{B}(u_0,r)$, $(a_1,b_1), (a_2,b_2)\in\mathcal{Y}(T)\times \mathcal{Y}(T)$, and $u\in\mathcal{S}(T)$,
\begin{align*}
\mathcal{M}(\tilde u_2,a_2,b_2)u-\mathcal{M}(\tilde u_1,a_1,b_1)u=&[\mathcal{M}(\tilde u_2,a_2,b_2)u-\mathcal{M}(\tilde u_1,a_2,b_2)u ]\\
&+[\mathcal{M}(\tilde u_1,a_2,b_2)u-\mathcal{M}(\tilde u_1,a_1,b_2)u]\\
&+[\mathcal{M}(\tilde u_1,a_1,b_2)u-\mathcal{M}(\tilde u_1,a_1,b_1)u].
\end{align*}
It then suffices to prove that $\mathcal{M}(\tilde u_0,a,b)$ is uniformly  continuous in $\tilde u_0\in\mathcal{B}(u_0,r)$
(resp. uniformly continuous  in $a\in\mathcal{Y}(T)$, uniformly continuous in $b\in\mathcal{Y}(T)$).
Observe that, for any  $\tilde u_n, \tilde u_0 \in \mathcal{B}(u_0,r)$, $(a,b)\in\mathcal{Y}_0(T)\times\mathcal{Y}_0(T)$, and
$u\in \mathcal{S}(T)$,   by comparison principles for parabolic equations, we have
\begin{equation*}
    \|(\mathcal{M}(\tilde u_n,a,b)u)(t,\cdot)-(\mathcal{M}({\tilde  u_0},a,b)u)(t,\cdot)\|_{ C^0(\bar{\Omega})} = \|e^{-(t-s)A}(\tilde u_n-\tilde u_0)\|_{C^0(\bar\Omega)}\le \|\tilde u_n-\tilde u_0\|_{C^0(\bar\Omega)}.
\end{equation*}
Thus,  the mapping   $\mathcal{M}({\tilde  u_0},a,b)$  is uniformly  continuous in $\tilde u_0\in\mathcal{B}(u_0,r)$.
For any $\tilde u_0\in\mathcal{B}(u_0,r)$, $a_n,a_0,b\in\mathcal{Y}(T)$, and $u\in\mathcal{S}(T)$,
\begin{align*}
\|(\mathcal{M}(\tilde u_0,a_n,b)u)(t,\cdot)-(\mathcal{M}(\tilde u_0,a_0,b)u)(t,\cdot)\|_{C^0(\bar\Omega)}&= \| \int_s^t e^{  -A(t-\tau)}  u(\tau,\cdot)\big[a_n(\tau,\cdot)-a_0(\tau,\cdot)\big]  d\tau\|_{C^0(\bar\Omega)}\\
&\le R \|a_n-a_0\|_{\mathcal{X}_T} T.
\end{align*}
This implies that  the mapping   $\mathcal{M}({\tilde  u_0},a,b)$  is uniformly continuous in $a\in\mathcal{Y}(T)$.
For any $\tilde u_0\in\mathcal{B}(u_0,r)$, $a,b_n,b_0\in\mathcal{Y}(T)$, and $u\in\mathcal{S}(T)$,
\begin{align*}
\|(\mathcal{M}(\tilde u_0,a,b_n)u)(t,\cdot)-(\mathcal{M}(\tilde u_0,a,b_0)u)(t,\cdot)\|_{C^0(\bar\Omega)}&= \| \int_s^t e^{  -A(t-\tau)}  u^2(\tau,\cdot)\big[b_n(\tau,\cdot)-b_0(\tau,\cdot)\big]  d\tau\|_{C^0(\bar\Omega)}\\
&\le R^2 \|b_n-b_0\|_{\mathcal{X}_T} T.
\end{align*}
Therefore,   the mapping   $\mathcal{M}({\tilde  u_0},a,b)$  is also uniformly continuous in $b\in\mathcal{Y}(T)$.

Finally, Let $T>0$ satisfy \eqref{T-eq1}, \eqref{T-eq2}, and \eqref{T-eq3}.
By Lemmas \ref{cont-family} and \ref{prelim-lm-01}, we conclude that $\mathcal{M}({\tilde  u_0},a,b)$ has a unique fixed point $u \in \mathcal{S}(T)$ fulfilling $(\mathcal{M}({\tilde  u_0},a,b)u)(t,\cdot)=u(t,\cdot;s,{\tilde  u_0},a,b)$ for $t \in [s,s+T],$ and we also have that $u(t,\cdot;s,{\tilde u_0},a,b)\in C^0(\bar\Omega)$ is continuous in ${\tilde  u_0}\in\mathcal{B}(u_0,r)$ and $(a,b)\in\mathcal{Y}(T)$ uniformly in  $t \in [s,s+T].$ The lemma is thus proved.
\end{proof}

\section{Boundedness of positive solutions}

In this section, we investigate the boundedness of positive solutions of \eqref{main-eq}, and  prove Theorem \ref{main-thm1}.  {Throughout this section, we assume that $(1.5)'$ holds.}

For given $s\in\mathbb{R}$ and $u_0$ satisfying \eqref{initial-cond-eq}, put
 $(u(t,x),v(t,x))=(u(t,x;s,u_0),v(t,x;s,u_0))$ for $t\ge s$ and $x\in\bar\Omega$.
Observe that, for any $1<q<\infty$,
\begin{align*}
\frac{1}{q}\int_\Omega u_t^q(t,x)dx& =-(q-1)\int_\Omega u^{q-2}(t,x)|\nabla u(t,x)|^2dx -\chi(q-1)\int_\Omega \frac{u^{q-1}(t,x)}{v(t,x)}\nabla u(t,x)\cdot\nabla v(t,x)dx\nonumber\\
&\,\,\,\, +\int_\Omega a(t,x)u^q(t,x)-\int_\Omega b(t,x)u^{q+1}(t,x) dx\quad \forall\, {t>s},
\end{align*}
and
 for any $p>0$,
\begin{align}
\label{proof-bdd-eq00}
    \frac{1}{p} \cdot \frac{d}{dt} \int_{\Omega} u^{-p}(t,x)= & -(p+1)\int_{\Omega} u^{-p-2}(t,x)|\nabla u(t,x)|^2 + (p+1) \chi \int_{\Omega} \frac{u^{-p-1}(t,x)}{v(t,x)}\nabla u (t,x)\cdot \nabla v(t,x) \nonumber\\
&-  \int_{\Omega} a(t,x) u^{-p} (t,x)+  \int_{\Omega} b(t,x) u^{-p+1}(t,x)\quad \forall\, {t>s}.
\end{align}
In the rest of this section, we may omit $(t,x)$ inside the integrals if no confusion occurs.

We first present some lemmas.

\begin{lemma}
\label{main-lem0}
For any $s\in\mathbb{R}$ and  $u_0$ satisfying ${ (1.2)'}$,
\begin{equation*}
    \int_{\Omega} u(t,x)dx \leq m^*(\tau,s,u_0)= {\rm max}\Big\{\int_{\Omega} u(\tau,x)dx,  \frac{a_{\sup}}{b_{\inf}}|\Omega| \Big\} \quad \forall\,\, t>\tau\ge s
\end{equation*}
and
$$
\limsup_{t-s\to\infty}\int u(t,x)dx\le \frac{a_{\sup}}{b_{\inf}}|\Omega|,
$$
where $|\Omega|$ is the Lebesgue measure of $\Omega$.
\end{lemma}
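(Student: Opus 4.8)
The plan is to reduce everything to a scalar logistic differential inequality for the total mass $m(t):=\int_\Omega u(t,x)\,dx$ and then invoke the comparison principle for ordinary differential equations. Since we work under $(1.5)'$ and $u_0$ satisfies $(1.2)'$, Lemma \ref{prelim-lm-01} guarantees that the classical solution $(u,v)=(u(t,\cdot;s,u_0),v(t,\cdot;s,u_0))$ exists on all of $(s,\infty)$, with $u\in C([s,\infty)\times\bar\Omega)\cap C^{1,2}((s,\infty)\times\bar\Omega)$; moreover $\int_\Omega u(t,\cdot)>0$ for every $t\ge s$ and, by Lemma \ref{prelim-lm-00}, $v(t,\cdot)>0$ on $\bar\Omega$ for $t>s$, so all integrals below are well defined.

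The first step is to integrate the first equation of \eqref{main-eq} over $\Omega$. Because $u$ is $C^{1,2}$ on $(s,\infty)\times\bar\Omega$, the function $m(t)$ is $C^1$ on $(s,\infty)$ and differentiation under the integral sign is legitimate; the divergence theorem together with the homogeneous Neumann conditions annihilates both $\int_\Omega\Delta u$ and $\chi\int_\Omega\nabla\cdot(\tfrac{u}{v}\nabla v)$ (the latter boundary term being $\chi\int_{\partial\Omega}\tfrac{u}{v}\tfrac{\partial v}{\partial n}=0$). Using $a\le a_{\sup}$, $b\ge b_{\inf}$, $u\ge 0$, and the Cauchy--Schwarz inequality $\big(\int_\Omega u\big)^2\le|\Omega|\int_\Omega u^2$, this yields
\[
m'(t)=\int_\Omega u\big(a(t,x)-b(t,x)u\big)\,dx\le a_{\sup}\,m(t)-\frac{b_{\inf}}{|\Omega|}\,m(t)^2,\qquad t>s.
\]

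The second step is to compare $m$ with the logistic ODE $y'=y\big(a_{\sup}-\tfrac{b_{\inf}}{|\Omega|}y\big)$. Writing $K:=\tfrac{a_{\sup}}{b_{\inf}}|\Omega|$ for its carrying capacity, elementary analysis of this scalar equation shows that a positive solution with initial value $\le K$ stays $\le K$, a solution with initial value $>K$ is strictly decreasing and stays below that initial value, and every positive solution tends to $K$ as $t\to\infty$. Fixing $\tau\ge s$ and applying the comparison theorem for scalar ODEs on $(\tau,\infty)$ to $m$, with initial value $m(\tau)$, gives $m(t)\le\max\{m(\tau),K\}=m^*(\tau,s,u_0)$ for all $t>\tau$; letting $t-s\to\infty$ then gives $\limsup_{t-s\to\infty}m(t)\le K=\tfrac{a_{\sup}}{b_{\inf}}|\Omega|$, which is the second assertion.

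This argument is essentially routine, so I do not anticipate a serious obstacle; the only points requiring mild care are the justification of differentiating $m(t)$ and of the divergence-theorem computation — both handled by the regularity and positivity furnished by Lemmas \ref{prelim-lm-01} and \ref{prelim-lm-00} — and invoking the ODE comparison on the open interval where $m$ is differentiable rather than at the endpoint $\tau$, which when $\tau=s$ is only a point of continuity.
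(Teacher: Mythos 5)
Your proposal is correct and follows essentially the same route as the paper: integrate the first equation over $\Omega$, use the Neumann boundary conditions to kill the diffusion and chemotaxis terms, bound the reaction term via $a_{\sup}$, $b_{\inf}$ and Cauchy--Schwarz to get the logistic differential inequality for the mass, and conclude by the comparison principle for scalar ODEs. The extra remarks on differentiability of the mass and the endpoint $\tau=s$ are fine but not a departure from the paper's argument.
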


\begin{proof}
By integrating the first equation in \eqref{main-eq} with respect to $x$, we get that
\begin{align*}
\frac{d}{dt} \int_{\Omega}u&=\int_{\Omega}\Delta u- \chi  \int_{\Omega} \nabla \cdot \Big(\frac{u}{v}\nabla v \Big)+ \int_{\Omega} a(x,t)u -b(x,t) u^2\\
    & =  \int_{\Omega}a(x,t)u(x,t)dx - \int_{\Omega} b(x, t)u^2(x,t)dx\\
    &  \leq a_{\sup} \int_{\Omega}u - \frac{b_{\inf}}{|\Omega|} \Big(\int_{\Omega} u\Big)^2 .
\end{align*}
This together with comparison principle for scalar ODEs  implies that
\begin{equation*}
    \int_{\Omega} u(t,x)dx \leq  {\rm max}\Big\{\int_{\Omega} u(\tau,x)dx,  \frac{a_{\sup}}{b_{\inf}}|\Omega| \Big\} \quad \forall\,\, t>\tau\ge s
\end{equation*}
and
$$
\limsup_{t-s\to\infty}\int u(t,x)dx\le \frac{a_{\sup}}{b_{\inf}}|\Omega|.
$$
The lemma is thus proved.
\end{proof}

\begin{lemma}
\label{main-lem2}
Let $q \ge  3$ and $q-1 \leq k <  2q-2.$  There exist positive constants  $ M(q,k)>0$ and $M^{*}(q,k)>0$  such that
\begin{equation*}
   \int_{\Omega} \frac{|\nabla v|^{2q}}{v^{k+1}} \leq M(k,q) \int_{\Omega} \frac{u^q}{v^{k-q+1}}+M^*(k,q)\int_{\Omega} v^{2q-k-1}
\end{equation*}
for any $s\in\mathbb{R}$, $u_0\in C^0(\bar\Omega)$ satisfying ${(1.2)'}$, and
 ${t\in (s, \infty)}$.
\end{lemma}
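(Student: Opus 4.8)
The plan is to read the asserted inequality, for each fixed $t\in(s,\infty)$, as a purely elliptic estimate: write $u:=u(t,\cdot;s,u_0)\ge 0$ and let $v:=v(t,\cdot;s,u_0)>0$ be the smooth solution of $\Delta v-\mu v+\nu u=0$ in $\Omega$ with $\partial v/\partial n=0$ on $\partial\Omega$, and set $I:=\int_\Omega v^{-(k+1)}|\nabla v|^{2q}$. The first step is to multiply the elliptic equation by $v^{-k}|\nabla v|^{2q-2}$ and integrate by parts; the Neumann condition kills all boundary terms in this step, and one obtains the identity
\[
kI-(q-1)\tilde J=\mu\int_\Omega v^{1-k}|\nabla v|^{2q-2}-\nu\int_\Omega u\,v^{-k}|\nabla v|^{2q-2},
\]
where $\tilde J:=\int_\Omega v^{-k}|\nabla v|^{2q-4}\,\nabla|\nabla v|^2\cdot\nabla v$. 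Since $u\ge 0$ the last integral is nonnegative, so $kI\le (q-1)\tilde J+\mu\int_\Omega v^{1-k}|\nabla v|^{2q-2}$, and the problem reduces to bounding $\tilde J$ from above.

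To control $\tilde J$ one must bring in second-order information. Using $\big|\nabla|\nabla v|^2\cdot\nabla v\big|\le\big|\nabla|\nabla v|^2\big|\,|\nabla v|$ and Young's inequality, $\tilde J$ is estimated in terms of the favorably weighted integral $G:=\int_\Omega v^{1-k}|\nabla v|^{2q-6}\big|\nabla|\nabla v|^2\big|^2$ and of $I$. To bound $G$ (equivalently the weighted Hessian integral $H:=\int_\Omega v^{1-k}|\nabla v|^{2q-4}|D^2v|^2$, to which it is comparable), I would use the Bochner identity $|D^2v|^2=\tfrac12\Delta|\nabla v|^2-\nabla v\cdot\nabla\Delta v$: inserting it and integrating by parts, the term $\int_\Omega v^{1-k}|\nabla v|^{2q-4}\Delta|\nabla v|^2$ yields bulk contributions proportional to $\tilde J$ and to $-G$ (with favorable sign) plus the boundary integral $\int_{\partial\Omega}v^{1-k}|\nabla v|^{2q-4}\,\partial_n|\nabla v|^2$; and in $-\int_\Omega v^{1-k}|\nabla v|^{2q-4}\nabla v\cdot\nabla\Delta v$ one substitutes $\nabla\Delta v=\mu\nabla v-\nu\nabla u$ and integrates by parts once more to move the derivative off $u$, then uses $\Delta v=\mu v-\nu u$ again to eliminate the reappearing Laplacian. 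This produces a finite family of weighted integrals of the shape $\int_\Omega u^{a}v^{b}|\nabla v|^{c}$ with $a\in\{0,1,2\}$.

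The boundary integral is where the non-convexity of $\Omega$ must be handled. Here one uses the standard pointwise consequence of the Neumann condition, $\partial_n|\nabla v|^2\le 2\kappa_\Omega|\nabla v|^2$ on $\partial\Omega$ with $\kappa_\Omega$ depending only on $\Omega$, together with a boundary trace (interpolation) inequality with small constant applied to $w=v^{(1-k)/2}|\nabla v|^{q-1}$; since $|\nabla w|^2$ is dominated by the densities of $H$ and $I$, the boundary term is absorbed into a small multiple of $H+I$ up to a multiple of the zero-order integral $\int_\Omega v^{1-k}|\nabla v|^{2q-2}$. Collecting all contributions one reaches an inequality $I+H\le\varepsilon(I+H)+C_\varepsilon\sum_j\int_\Omega u^{a_j}v^{b_j}|\nabla v|^{c_j}$ with $a_j\in\{0,1,2\}$, and the last step is to estimate each of these weighted integrals by Young's inequality written with the three building blocks $u^{q}v^{-(k-q+1)}$, $v^{-(k+1)}|\nabla v|^{2q}$ (the density of $I$), and $v^{2q-k-1}$: the matching exponents are uniquely determined, and one checks that they are admissible (lie in $(0,1)$, sum to $1$) and that the power of $v$ balances with no leftover factor exactly under the hypotheses $q\ge 3$ and $q-1\le k<2q-2$. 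Choosing $\varepsilon$ small enough to absorb $\varepsilon(I+H)$ then gives $I\le M(q,k)\int_\Omega u^{q}v^{-(k-q+1)}+M^{*}(q,k)\int_\Omega v^{2q-k-1}$ with $M,M^{*}$ depending only on $q,k,\Omega,\mu,\nu,N$.

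The step I expect to be the crux is making the Bochner identity, the boundary curvature bound and the trace interpolation interlock so that $H$, $G$ and a fraction of $I$ are genuinely absorbed rather than merely traded between the intermediate inequalities; this is what forces the precise constraints $q\ge 3$ and $q-1\le k<2q-2$, and it requires careful bookkeeping of the many weighted integrals and of their Young exponents in the final step.
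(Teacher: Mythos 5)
The paper offers no argument for this lemma beyond the single line ``It follows from \cite[Proposition 1.3]{HKWS}'', so there is no written proof here for you to match; the inequality is purely elliptic in nature, exactly as you read it, and your architecture is the standard derivation of such weighted gradient bounds in this literature (Fujie--Winkler--Yokota and \cite{HKWS} itself): test $0=\Delta v-\mu v+\nu u$ with $v^{-k}|\nabla v|^{2q-2}$ (your opening identity $kI-(q-1)\tilde J=\mu\int_\Omega v^{1-k}|\nabla v|^{2q-2}-\nu\int_\Omega u\,v^{-k}|\nabla v|^{2q-2}$ is correct, and the sign of the $u$-term is indeed favorable), control $\tilde J$ through the Hessian via $\nabla|\nabla v|^2=2D^2v\,\nabla v$ and the Bochner identity, handle the boundary with $\partial_n|\nabla v|^2\le 2\kappa_\Omega|\nabla v|^2$ together with a trace interpolation applied to $w=v^{(1-k)/2}|\nabla v|^{q-1}$, and close with Young's inequality against the three densities $u^qv^{-(k-q+1)}$, $v^{-(k+1)}|\nabla v|^{2q}$, $v^{2q-k-1}$. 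So you are on the route that the cited source takes.

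The genuine gap is that you stop exactly at what you yourself call the crux, and that step is not automatic. In your splitting $\tilde J\le\frac{\epsilon}{2}G+\frac{1}{2\epsilon}I$ the parameter $\epsilon$ cannot be taken arbitrarily small, because the resulting term $\frac{q-1}{2\epsilon}I$ has to be strictly dominated by the $kI$ already on the left; hence the Hessian-type integral $G$ (equivalently $H$) enters with a coefficient bounded away from zero, and when $G$ is in turn estimated through the Bochner identity, the curvature bound and the trace interpolation, further multiples of $I$ and $H$ return. Whether all of these can be simultaneously absorbed, and whether each resulting term $\int_\Omega u^{a_j}v^{b_j}|\nabla v|^{c_j}$ really admits Young exponents in $(0,1)$ summing to one against the three building blocks, is precisely where the hypotheses $q\ge 3$ and $q-1\le k<2q-2$ (which also ensure $k-q+1\ge 0$ and $2q-k-1>0$) must do their work; you assert this bookkeeping rather than carry it out. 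As written the proposal is a credible plan whose decisive quantitative step is missing: either display the interlocking constants and the Young exponents explicitly, or do what the paper does and simply quote Proposition 1.3 of \cite{HKWS}.
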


\begin{proof}
It follows from \cite[{Proposition 1.3}]{HKWS}.
\end{proof}

\begin{lemma}
\label{main-lem3}
For any $p>0$, $s\in\mathbb{R}$ and $u_0\in C^0(\bar\Omega)$ satisfying ${(1.2)'}$,
\begin{equation*}
    \int_{\Omega} u \ge |\Omega|^{\frac{p+1}{p}} \left(\int_{\Omega}u^{-p}\right)^{-\frac{1}{p}} \quad \forall\, t>s.
\end{equation*}
\end{lemma}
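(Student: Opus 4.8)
The plan is to obtain this bound as a one-line application of Hölder's inequality to the trivial factorization of the constant function $1$ on $\Omega$. First I would record the standing facts that make the statement meaningful: by the positivity of classical solutions (noted just before Definition \ref{solu-def} and reiterated in Section \ref{S:intro}), $u(t,x)=u(t,x;s,u_0)>0$ for every $x\in\Omega$ and every $t>s$, so $u^{-p}(t,\cdot)$ is a well-defined positive measurable function on $\Omega$; moreover $u(t,\cdot)\in C^0(\bar\Omega)$ for $t>s$ by Definition \ref{solu-def}, hence $\int_\Omega u(t,x)\,dx<\infty$. If $\int_\Omega u^{-p}(t,x)\,dx=+\infty$ the asserted inequality is trivial because its right-hand side is then $0$, so we may assume this integral is finite.

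Next I would write
\begin{equation*}
|\Omega|=\int_\Omega 1\,dx=\int_\Omega u^{\frac{p}{p+1}}(t,x)\,u^{-\frac{p}{p+1}}(t,x)\,dx
\end{equation*}
and apply Hölder's inequality with the conjugate exponents $\frac{p+1}{p}$ and $p+1$ (indeed $\frac{p}{p+1}+\frac{1}{p+1}=1$), which gives
\begin{equation*}
|\Omega|\le\Big(\int_\Omega u(t,x)\,dx\Big)^{\frac{p}{p+1}}\Big(\int_\Omega u^{-p}(t,x)\,dx\Big)^{\frac{1}{p+1}}.
\end{equation*}
Raising both sides to the power $\frac{p+1}{p}>0$ and dividing by $\big(\int_\Omega u^{-p}(t,x)\,dx\big)^{1/p}$ (which is finite and nonzero) yields
\begin{equation*}
\int_\Omega u(t,x)\,dx\ge|\Omega|^{\frac{p+1}{p}}\Big(\int_\Omega u^{-p}(t,x)\,dx\Big)^{-\frac{1}{p}},
\end{equation*}
as claimed.

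There is no genuine obstacle in this argument; it is a routine interpolation estimate. The only points worth keeping in mind are the strict positivity of $u$ on $\Omega$ for $t>s$ (already established earlier, so that the negative powers and the division are legitimate) and the degenerate case $\int_\Omega u^{-p}=\infty$, which is handled trivially. I would present the proof in essentially the three displays above.
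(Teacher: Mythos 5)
Your argument is correct and is essentially the paper's own proof: both write $|\Omega|=\int_\Omega u^{\frac{p}{p+1}}u^{-\frac{p}{p+1}}$, apply H\"older's inequality with exponents $\frac{p+1}{p}$ and $p+1$, and rearrange. The extra remarks on strict positivity of $u$ and on the trivial case $\int_\Omega u^{-p}=\infty$ are fine but not needed beyond what the paper already assumes.
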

\begin{proof}
For any given $p>0$, by H\"older's inequality, we have
\begin{align*}
|\Omega|=\int_\Omega u^{\frac{p}{p+1}} u^{-\frac{p}{p+1}}\le \Big(\int_\Omega u\Big)^{\frac{p}{p+1}}\Big(\int_\Omega u^{-p}\Big)^{\frac{1}{p+1}}\quad \forall\, t>s.
\end{align*}
This implies that \begin{equation*}
    \int_{\Omega} u \ge |\Omega|^{\frac{p+1}{p}} \left(\int_{\Omega}u^{-p}\right)^{-\frac{1}{p}} \quad \forall\, t>s.
\end{equation*}
The lemma is thus proved.
\end{proof}

\begin{lemma}
\label{main-lem4}
Let $p>0$. Then for every $\beta>0$, we have
\begin{align*}
     (p+1)\chi \int_{\Omega}  \frac{u^{-p-1}}{v} \nabla u \cdot \nabla v &\leq (p+1) \int_{\Omega} u^{-p-2}|\nabla u|^2 + \frac{(p+1)\beta\mu}{p} \int_{\Omega} u^{-p} \nonumber\\
     & \quad + \left[\frac{ (p+1)(\chi-\beta)^2 }{4}-\frac{(p+1)\beta}{p} \right] \int_{\Omega} u^{-p} \frac{|\nabla v|^2}{v^2}
\end{align*}
for all $t\in (s,\infty)$.
\end{lemma}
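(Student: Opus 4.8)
The plan is to start from the identity \eqref{proof-bdd-eq00} and isolate the cross term $(p+1)\chi\int_\Omega \frac{u^{-p-1}}{v}\nabla u\cdot\nabla v$, then bound it by a weighted Cauchy–Schwarz (Young's) inequality with a free parameter $\beta>0$. The key algebraic device is to split the chemotaxis exponent $\chi$ as $\chi=(\chi-\beta)+\beta$ and to distribute the two pieces differently: the $(\chi-\beta)$ part is paired against the gradient term $\int_\Omega u^{-p-2}|\nabla u|^2$ so as to reproduce exactly the coefficient $(p+1)$ in front of it (matching the dissipation already present in \eqref{proof-bdd-eq00}), while the $\beta$ part is handled by a second Young inequality that trades $\int_\Omega u^{-p}\frac{|\nabla v|^2}{v^2}$ against $\int_\Omega u^{-p}$, producing the factors $\frac{(p+1)\beta\mu}{p}$ and $-\frac{(p+1)\beta}{p}$.

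More concretely, first write
\[
(p+1)\chi\,\frac{u^{-p-1}}{v}\nabla u\cdot\nabla v
=(p+1)(\chi-\beta)\,\frac{u^{-p-1}}{v}\nabla u\cdot\nabla v
+(p+1)\beta\,\frac{u^{-p-1}}{v}\nabla u\cdot\nabla v .
\]
For the first term, apply Young's inequality in the form $XY\le X^2+\frac14 Y^2$ with $X=u^{-p/2-1}\nabla u$ and $Y=(\chi-\beta)\,u^{-p/2}\frac{\nabla v}{v}$ (times the factor $(p+1)$), which gives
\[
(p+1)(\chi-\beta)\,\frac{u^{-p-1}}{v}\nabla u\cdot\nabla v
\le (p+1)u^{-p-2}|\nabla u|^2+\frac{(p+1)(\chi-\beta)^2}{4}\,u^{-p}\frac{|\nabla v|^2}{v^2}.
\]
For the second term, apply Young's inequality as $(p+1)\beta\,\frac{u^{-p-1}}{v}\nabla u\cdot\nabla v\le \frac{(p+1)\beta}{p}u^{-p}\frac{|\nabla v|^2}{v^2}+\frac{(p+1)\beta p}{4}u^{-p-2}|\nabla u|^2$, but this would generate an extra $u^{-p-2}|\nabla u|^2$ term; to avoid that I would instead integrate by parts, moving the derivative off $u$. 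Indeed, $\frac{u^{-p-1}}{v}\nabla u = -\frac1p\nabla(u^{-p})\cdot\frac1v$, so
\[
(p+1)\beta\int_\Omega \frac{u^{-p-1}}{v}\nabla u\cdot\nabla v
=-\frac{(p+1)\beta}{p}\int_\Omega \nabla(u^{-p})\cdot\frac{\nabla v}{v}
=\frac{(p+1)\beta}{p}\int_\Omega u^{-p}\,\nabla\!\cdot\!\Big(\frac{\nabla v}{v}\Big),
\]
using the Neumann boundary condition to kill the boundary term. Then $\nabla\cdot\big(\frac{\nabla v}{v}\big)=\frac{\Delta v}{v}-\frac{|\nabla v|^2}{v^2}$, and the elliptic equation $\Delta v=\mu v-\nu u$ gives $\frac{\Delta v}{v}=\mu-\frac{\nu u}{v}\le \mu$ since $u,v>0$. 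Hence
\[
(p+1)\beta\int_\Omega \frac{u^{-p-1}}{v}\nabla u\cdot\nabla v
\le \frac{(p+1)\beta\mu}{p}\int_\Omega u^{-p}-\frac{(p+1)\beta}{p}\int_\Omega u^{-p}\frac{|\nabla v|^2}{v^2}.
\]
Adding the two estimates yields exactly the claimed inequality.

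The main obstacle, and the reason I favor the integration-by-parts route for the $\beta$-piece over a naive double Young inequality, is getting the coefficient of $\int_\Omega u^{-p-2}|\nabla u|^2$ to come out as precisely $(p+1)$ — no more — so that it can be absorbed against the dissipative term in \eqref{proof-bdd-eq00} without leaving any leftover bad sign. The integration-by-parts computation sidesteps this by never producing a $|\nabla u|^2$ contribution from the $\beta$-term at all; the only subtlety there is the justification of integrating by parts (validity of the boundary term vanishing and integrability of $\nabla(u^{-p})\cdot\nabla v/v$), which follows from the smoothness and positivity of the classical solution $(u,v)$ on $(s,\infty)\times\bar\Omega$ guaranteed by Lemma \ref{prelim-lm-01} together with Lemma \ref{prelim-lm-00} bounding $v$ away from zero.
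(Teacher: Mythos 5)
Your proof is correct: splitting $\chi=(\chi-\beta)+\beta$, applying Young's inequality to the $(\chi-\beta)$ piece, and handling the $\beta$ piece by writing $u^{-p-1}\nabla u=-\tfrac1p\nabla(u^{-p})$, integrating by parts with the Neumann condition, and using $\Delta v=\mu v-\nu u$ (dropping $-\nu u/v\le 0$) reproduces exactly the stated coefficients. The paper gives no in-text argument for this lemma, deferring to \cite[Lemma 3.3]{HKWS}, and your derivation is essentially the standard one behind that cited result, so there is nothing to fix.
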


\begin{proof}
This is Lemma \cite[Lemma 3.3]{HKWS}.
\end{proof}

\begin{lemma}
\label{main-lem5}
Let $R>0$ be such that
\begin{equation*}
    R>
\begin{cases}
\frac{\mu \chi^2}{4}, &\text{if\,\, $0< \chi \leq 2$}\\
\mu(\chi-1), &\text{if \,$\chi>2$.}\\
\end{cases}
\end{equation*}
Then there is $\beta>0$, $\beta\not =\chi$ such that
\begin{equation*}
    \frac{(p+1)\beta \mu}{p}-R<0,
\end{equation*}
where $p$ is given by
\begin{equation*}
    p=\frac{4\beta}{( \chi-\beta)^2}.
\end{equation*}
\end{lemma}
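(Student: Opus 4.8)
The plan is to reduce the claim to an elementary one–variable computation. First I would substitute the prescribed value $p=\frac{4\beta}{(\chi-\beta)^{2}}$, which is well defined and strictly positive for every $\beta>0$ with $\beta\neq\chi$. Since $\frac{p+1}{p}=1+\frac1p=1+\frac{(\chi-\beta)^{2}}{4\beta}$, this gives
\[
\frac{(p+1)\beta\mu}{p}=\mu\Big(\beta+\frac{(\chi-\beta)^{2}}{4}\Big)=:\mu f(\beta).
\]
Hence it suffices to produce some $\beta\in(0,\infty)\setminus\{\chi\}$ with $\mu f(\beta)<R$; equivalently, to check that $\mu$ times the infimum of $f$ over $(0,\infty)$ does not exceed the threshold in the hypothesis, and then to use continuity of $f$ together with the \emph{strict} inequality assumed on $R$ to pass from the infimum to an admissible value of $\beta$.

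The second step is to analyze $f(\beta)=\beta+\frac{(\chi-\beta)^{2}}{4}$ on $(0,\infty)$. Its derivative $f'(\beta)=1+\frac{\beta-\chi}{2}$ vanishes only at $\beta=\chi-2$ and is strictly increasing, so $f$ is convex with unconstrained minimum value $f(\chi-2)=\chi-1$. If $\chi>2$, then $\beta^{*}:=\chi-2$ lies in $(0,\infty)$ and is different from $\chi$, and $\mu f(\beta^{*})=\mu(\chi-1)<R$, so $\beta=\beta^{*}$ works. If $0<\chi\le 2$, then $f'(0)=1-\frac{\chi}{2}\ge 0$, so $f$ is nondecreasing on $[0,\infty)$ and $\inf_{\beta>0}f(\beta)=\lim_{\beta\to0^{+}}f(\beta)=\frac{\chi^{2}}{4}$, the infimum not being attained; since $R>\frac{\mu\chi^{2}}{4}$, continuity of $f$ at $0$ yields a small $\beta>0$ with $\mu f(\beta)<R$, and taking $\beta<\chi$ (possible because $\chi>0$) also ensures $\beta\neq\chi$. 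In either case the selected $\beta$ satisfies $\beta>0$, $\beta\neq\chi$, and $\frac{(p+1)\beta\mu}{p}=\mu f(\beta)<R$, which is the assertion.

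There is no real obstacle here; the only point deserving a little care is the regime $0<\chi\le 2$, where the infimum of $f$ over $(0,\infty)$ is \emph{not} attained, so one cannot simply plug in a minimizer and must exploit that $R$ is strictly larger than $\frac{\mu\chi^{2}}{4}$ to obtain an admissible $\beta$; the requirement $\beta\neq\chi$ is then automatic once $\beta$ is taken small. It is worth recording the identity $\frac{(p+1)\beta\mu}{p}=\mu\big(\beta+\frac{(\chi-\beta)^{2}}{4}\big)$ explicitly, since this is precisely the quantity that, by Lemma~\ref{main-lem4}, controls the coefficient of $\int_\Omega u^{-p}$ in the differential inequality for $\int_\Omega u^{-p}$, and the pair $(\beta,p)$ produced here is exactly what renders that coefficient negative.
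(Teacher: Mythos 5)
Your proposal is correct. Note that the paper itself gives no argument for this lemma --- it simply cites \cite[Lemma 3.1]{HKWS} --- so there is no internal proof to compare against; your self-contained computation, rewriting $\frac{(p+1)\beta\mu}{p}=\mu\big(\beta+\frac{(\chi-\beta)^2}{4}\big)$ and minimizing this convex function of $\beta$, is exactly the elementary argument one expects behind the cited result. The case analysis is handled correctly: for $\chi>2$ the minimizer $\beta=\chi-2$ is admissible and gives the value $\mu(\chi-1)$, while for $0<\chi\le 2$ you rightly observe that the infimum $\frac{\mu\chi^2}{4}$ is not attained on $(0,\infty)$ and use the strict inequality on $R$ together with continuity at $\beta=0$ to select a small admissible $\beta$; your closing remark that this choice of $(\beta,p)$ is precisely what makes the coefficient in Lemma \ref{main-lem4} negative matches how the lemma is used in the proof of Theorem \ref{main-thm1}(1), where $\beta=\chi+2-2\sqrt{\chi+1}$, $p=1$ gives $2\beta\mu=a_{\chi,\mu}$.
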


\begin{proof}
This is \cite[Lemma 3.1]{HKWS}.
\end{proof}

We now prove Theorem \ref{main-thm1}.

\begin{proof}[Proof of Theorem \ref{main-thm1}]
(1) { It follows from the arguments of \cite[Theorem 1.1(3)]{HKWS}.
To be more precise,  let
$\beta=\chi+2-2\sqrt{\chi+1}$. Then
\begin{equation*}
    p:=\frac{4\beta}{(\chi-\beta)^2}=\frac{4(\chi+2-2\sqrt{\chi+1})}{(-2+2\sqrt{\chi+1})^2}=1 \,\,\Longrightarrow \,\, \frac{(\chi-\beta)^2}{4}-\beta=0,
\end{equation*}
and
\begin{equation*}
    \frac{(p+1)\beta\mu}{p}=2\beta\mu=2(\chi+2-2\sqrt{\chi+1})\mu=a_{\chi,\mu}<a_{\inf}.
\end{equation*}
By \eqref{proof-bdd-eq00},
\begin{align*}
     \cdot \frac{d}{dt} \int_{\Omega} u^{-1} \leq -2\int_{\Omega} u^{-3}|\nabla u|^2 + 2 \chi \int_{\Omega} \frac{u^{-2}}{v}\nabla u \cdot \nabla v
     -  a_{\inf} \int_{\Omega} u^{-1} +  b_{\sup} |\Omega|
\end{align*}
for all $t>s$.
 By Lemma \ref{main-lem4},  we have that
\begin{align*}
      \cdot \frac{d}{dt} \int_{\Omega} u^{-1} &\leq 2 \left[\frac{ (\chi-\beta)^2 }{4}-{\beta} \right] \int_{\Omega} u^{-1} \frac{|\nabla v|^2}{v^2} + \left[2\beta\mu-a_{\inf}\right] \int_{\Omega} u^{-1}+ b_{\sup} |\Omega|\\
& = -(a_{\inf}-a_{\chi,\mu}) \int_{\Omega} u^{-1} + b_{\sup}|\Omega|,\,\, \,\, \forall\, t>\tau>s.
\end{align*}
It then follows from comparison principle for scalar ODEs that
\begin{equation*}
     \int_\Omega u^{-1}(t,x;s,u_0)dx\le e^{-(a_{\inf}-a_{\chi,\mu}) (t-\tau)} \int_{\Omega} u^{-1}(\tau,x;s,u_0) dx +\frac{b_{\sup}|\Omega|}{a_{\inf}-a_{\chi,\mu}}
\end{equation*}
and
\begin{equation}
\label{new-new-eq1-1}
\int_\Omega u^{-1}(t,x;s,u_0)dx\le \max\Big\{\int_\Omega u^{-1}(\tau,x;s,u_0)dx,  \frac{b_{\sup}|\Omega|}{a_{\inf}-a_{\chi,\mu}}\Big\}
\end{equation}
for any $t>\tau>s$.  Moreover,
$$
\lim_{t-s\to\infty}\int_\Omega u^{-1}(t,x;s,u_0)dx=\frac{b_{\sup}|\Omega|}{a_{\inf}-a_{\chi,\mu}}.
$$
 Theorem \ref{main-thm1}(1)  thus follows. }

(2)  { It also follows from the  arguments of \cite[Theorem 1.1(3)]{HKWS}.
To be more precise, by Lemmas \ref{prelim-lm-00} and \ref{main-lem3}, we have
\begin{equation*}
  v(t,x)\ge \delta_0  \int_{\Omega} u \ge \delta_0  |\Omega|^{2} \left(\int_{\Omega}u^{-1}\right)^{-1} \quad \forall\, t>s.
\end{equation*}
By  \eqref{new-new-eq1-1} and  the assumption $(1.2)'$
\begin{equation*}
        \int_\Omega u\ge \frac{(a_{\inf}-a_{\chi,\mu})\min\{1,\chi\}}{b_{\sup}|\Omega|},\quad
v\ge \frac{\delta_0  (a_{\inf}-a_{\chi,\mu})\min\{1,\chi\}|\Omega|}{b_{\sup}}
\quad \text{for all}\,\,   t\in [s+\tau_0,\infty).
\end{equation*}
By the arguments of  \cite[Theorem 1.1 (3)]{HKWS}, for any $q>\max\{2,N\}$ and any $\varepsilon>0$, there is $C(\varepsilon,q)>0$ such that
\begin{align*}
    \frac{1}{q} \cdot \frac{d}{dt} \int_{\Omega} u^q
    &\leq \frac{b_{\sup}|\Omega| (q-1)\Big\{(C^*)^{\frac{1}{q+1}}+\varepsilon\Big\}\max\{\chi,\chi^2\}}{4\delta_0  (a_{\inf}-a_{\chi,\mu})} \int_\Omega u^{q+1} \\
&\quad
+ \frac{b_{\sup}|\Omega|(q-1)  C(\varepsilon,p)\max\{\chi,\chi^2\}}{4\delta_0 (a_{\inf}-a_{\chi,\mu})}
    \Big(\int_\Omega u\Big)^{{q+1}} \\
&\quad +a_{\sup}\int_{\Omega}u^q -  b_{\inf} \int_{\Omega} u^{q+1}
    \,\, \forall\,  t\in (s+\tau_0,\infty).
\end{align*}
By the assumption $(1.5)'$,  there  are $q>\max\{2,n\}$ and $0<\varepsilon=\varepsilon(q)\ll 1$ such that
\begin{equation*}
    \frac{b_{\sup}|\Omega| (q-1)\Big\{ (C^*(n,\nu))^{\frac{1}{q+1}}+ \varepsilon\Big\}\max\{\chi,\chi^2\}}{4\delta_0 (a_{\inf}-a_{\chi,\mu})}  <b_{\inf}.
\end{equation*}
Note that
\begin{equation*}
    \int_\Omega u^{q+1}\ge \frac{1}{|\Omega|}\Big(\int_\Omega u^q\Big)^{\frac{q+1}{p}}\quad \text{for all}\,\,  t\in (s,\infty).
\end{equation*}
Therefore, there is $b_1>0$ such that
\begin{align*}
    \frac{1}{q} \cdot \frac{d}{dt} \int_{\Omega} u^q
    &\leq  \frac{b_{\sup}|\Omega|(q-1) C(\varepsilon,q)\max\{\chi,\chi^2 \}}{4\delta_0 (a_{\inf}-a_{\chi,\mu})}
\Big(\int_\Omega u\Big)^{{ q+1}}\nonumber\\
&\quad  +a_{\sup}\int_{\Omega}u^q -  b_1\Big( \int_{\Omega} u^{q}\Big)^{\frac{q+1}{p}}
\quad \text{for all}\,\,  t\in (s+\tau_0,\infty).
\end{align*}
This  together with    Lemma \ref{main-lem0} implies that there are  $M_2(1,q)$ and $\tilde M_2(1,q,\tau,s,u_0)$
such that
\begin{equation}
\label{tilde-M-2}
\begin{cases}
\frac{d}{dt}\int_\Omega u^{q}\le -\int _\Omega u^q+M_2(1,q)\quad \forall\, t\ge s+\tau_0\cr
\frac{d}{dt}\int_\Omega u^{q}\le -\int _\Omega u^q+M_2(1,q)+
\tilde M_2(1,q,\tau,s,u_0)
\quad \forall\, t\ge \tau> s.
\end{cases}
\end{equation}
Theorem \ref{main-thm1}(2) then follows. }
\end{proof}

\section{Globally attracting invariant rectangle and pointwise persistence}

In this section, we investigate the  existence of globally attracting invariant rectangle for \eqref{main-eq} and pointwise persistence, and  prove Theorems \ref{main-thm2} and \ref{main-thm3}.

We first prove a lemma.

\begin{lemma}
\label{prelim-lm-2}
Assume that ${ (1.5)'}$
holds. Let $p>0$ be as in Theorem \ref{main-thm1}(1).  There are $M_1^*>0$ and
$\tilde M_1^*>0$ such that for every $u_0\in C^0(\bar\Omega)$ satisfying ${ (1.2)'}$  and $\int_\Omega u_0(x)dx\le \frac{a_{\sup}}{b_{\inf}}|\Omega|$, $s\in\mathbb{R}$, and $\tau>0$,
\begin{equation}
    \label{bdd-p-int}
    \int_\Omega u^{-p}(t,x;s,u_0)dx \leq \max \Big \{\int_{\Omega} u^{-p}(s+\tau,x;s,u_0)dx,M_1^* \Big\} \quad \text{for all} \,\,  s+\tau<t<\infty
\end{equation}
and
\begin{equation}
\label{bdd-p-int-0}
\limsup_{t-s\to\infty} \int_\Omega u^{-p}(t,x;s,u_0)dx\le M_1^*,
\end{equation}
\begin{equation}
\label{bdd-p-int-00}
\liminf_{t-s\to\infty}\int_\Omega u(t,x;s,u_0)dx\ge \tilde M_1^*,\quad \liminf_{t-s\to\infty} \inf_{x\in\Omega}v(t,x;s,u_0)\ge \delta_0 \tilde M_1^*.
\end{equation}
In addition, if $\inf_{x\in\Omega}u_0(x)>0$ and $\int_\Omega u_0^{-p}(x)dx\le M_1^*$, then
\begin{equation}
    \label{bdd-p-int-1}
    \int_\Omega u^{-p}(t,x;s,u_0)dx \leq  M_1^*\quad   \forall \,\,  s<t<\infty
\end{equation}
and
\begin{equation}
\label{bdd-p-int-1-1}
\int_\Omega u(t,x;s,u_0)\ge \tilde M_1^*, \quad  \inf_{x\in\Omega} v(t,x;s,u_0)\ge \delta_0 \tilde M_1^*\quad \forall t\in [s,\infty).
\end{equation}
\end{lemma}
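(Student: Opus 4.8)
The plan is to run the differential inequality for $\int_\Omega u^{-p}$ established in Theorem~\ref{main-thm1}(1) and extract the initial-independent quantities from it. Recall from Theorem~\ref{main-thm1}(1) (with $p=1$, $\gamma=a_{\inf}-a_{\chi,\mu}$, $M_1(p)=\frac{b_{\sup}|\Omega|}{a_{\inf}-a_{\chi,\mu}}$) that for any $u_0$ satisfying $(1.2)'$ and any $\tau>s$,
\begin{equation*}
\int_\Omega u^{-p}(t,x;s,u_0)\,dx\le e^{-\gamma(t-\tau)}\int_\Omega u^{-p}(\tau,x;s,u_0)\,dx + M_1(p)+\tilde M_1(p,\tau,s,u_0)\quad\forall\,t\ge\tau.
\end{equation*}
The term $\tilde M_1(p,\tau,s,u_0)=p\,b_{\sup}|1-p|\big(m^*(\tau,s,u_0)-\frac{a_{\sup}}{b_{\inf}}|\Omega|\big)$ vanishes when $p=1$; more to the point, under the present extra hypothesis $\int_\Omega u_0\le\frac{a_{\sup}}{b_{\inf}}|\Omega|$, Lemma~\ref{main-lem0} gives $\int_\Omega u(\tau,x)\,dx\le\frac{a_{\sup}}{b_{\inf}}|\Omega|$ for all $\tau\ge s$, hence $m^*(\tau,s,u_0)=\frac{a_{\sup}}{b_{\inf}}|\Omega|$ and $\tilde M_1(p,\tau,s,u_0)=0$. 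So the estimate collapses to
\begin{equation*}
\int_\Omega u^{-p}(t,x;s,u_0)\,dx\le e^{-\gamma(t-\tau)}\int_\Omega u^{-p}(\tau,x;s,u_0)\,dx + M_1(p)\quad\forall\,t\ge\tau\ge s.
\end{equation*}
Then I would set $M_1^*:=M_1(p)$ (or, to be safe, $M_1^*:=2M_1(p)$, matching the factor appearing in Remark~\ref{rmrk-1}(3)), which is manifestly independent of $u_0$ and $s$.

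From this single inequality everything follows by elementary ODE/comparison reasoning. For \eqref{bdd-p-int}: apply the displayed estimate with $\tau$ replaced by $s+\tau$; if $\int_\Omega u^{-p}(s+\tau,x)\,dx\le M_1^*$ then the right side is $\le M_1^*$ for all $t>s+\tau$; otherwise the convex-combination structure $e^{-\gamma(t-s-\tau)}(\cdot)+M_1(p)$ stays below the max of the two quantities. This is exactly the ``max'' form \eqref{revised-negative-p-eq1} specialized to $\tau\mapsto s+\tau$. For \eqref{bdd-p-int-0}: let $t-s\to\infty$ in the displayed estimate with $\tau$ fixed, so the exponential term dies and $\limsup\le M_1(p)\le M_1^*$. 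For \eqref{bdd-p-int-00}: combine \eqref{bdd-p-int-0} with Lemma~\ref{main-lem3}, which gives $\int_\Omega u\ge|\Omega|^{(p+1)/p}(\int_\Omega u^{-p})^{-1/p}$, so $\liminf_{t-s\to\infty}\int_\Omega u\ge |\Omega|^{(p+1)/p}(M_1^*)^{-1/p}=:\tilde M_1^*$; then Lemma~\ref{prelim-lm-00} gives $v(t,x)\ge\delta_0\int_\Omega u$, whence $\liminf\inf_x v\ge\delta_0\tilde M_1^*$. For the ``in addition'' part \eqref{bdd-p-int-1}: when $\inf_x u_0>0$ we may take $\tau=s$ in the displayed estimate (this is where Lemma~\ref{prelim-lm-01}, specifically \eqref{proof-eqq-1}, is needed: $\int_\Omega u^{-p}(t,x)\to\int_\Omega u_0^{-p}(x)$ as $t\to s^+$, so the estimate extends down to the initial time by continuity), giving $\int_\Omega u^{-p}(t,x)\le e^{-\gamma(t-s)}\int_\Omega u_0^{-p}+M_1(p)\le M_1^*$ for all $t>s$ once $\int_\Omega u_0^{-p}\le M_1^*$ and $M_1^*\ge M_1(p)$; then \eqref{bdd-p-int-1-1} follows from \eqref{bdd-p-int-1} via Lemma~\ref{main-lem3} and Lemma~\ref{prelim-lm-00} exactly as before, now for all $t\in[s,\infty)$ rather than asymptotically.

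The only genuine subtlety — hardly an obstacle, but the point requiring care — is the passage to the initial time $t=s$ in \eqref{bdd-p-int-1}: Theorem~\ref{main-thm1}(1) as stated holds for $t\ge\tau>s$, strictly, so to conclude a bound valid on all of $(s,\infty)$ one must either invoke the continuity statement \eqref{proof-eqq-1} of Lemma~\ref{prelim-lm-01} to let $\tau\to s^+$ inside the estimate, or argue directly from the differential inequality $\frac{d}{dt}\int_\Omega u^{-p}\le-\gamma\int_\Omega u^{-p}+b_{\sup}|\Omega|$ (valid on $(s,\infty)$, as in the proof of Theorem~\ref{main-thm1}(1)) together with the right-continuity of $t\mapsto\int_\Omega u^{-p}(t,x)\,dx$ at $t=s$. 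I would do the latter: integrate the differential inequality on $(s,t]$ using that the solution is smooth for $t>s$ and continuous up to $t=s$ in the relevant sense, which directly yields the Gronwall bound with $\tau=s$. Everything else is bookkeeping with the constants $M_1^*,\tilde M_1^*$ already pinned down above.
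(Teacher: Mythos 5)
Your proposal is correct and follows essentially the same route as the paper: under the extra hypothesis $\int_\Omega u_0\le\frac{a_{\sup}}{b_{\inf}}|\Omega|$, Lemma \ref{main-lem0} forces $\tilde M_1(p,\tau,s,u_0)=0$, so \eqref{bdd-p-int} and \eqref{bdd-p-int-0} follow from \eqref{revised-negative-p-eq1}--\eqref{revised-negative-p-eq2} with $M_1^*=M_1(p)$, the lower bounds come from the H\"older estimate of Lemma \ref{main-lem3} combined with Lemma \ref{prelim-lm-00} giving $\tilde M_1^*=|\Omega|^{\frac{p+1}{p}}(M_1^*)^{-\frac{1}{p}}$, and the ``in addition'' part is obtained by letting $\tau\to s^+$ via \eqref{proof-eqq-1}, exactly as in the paper. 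The only nitpick is your parenthetical attempt to recover the max bound from the exponential-plus-constant estimate: $e^{-\gamma(t-\tau)}A+M_1(p)$ is a sum, not a convex combination, and can exceed $\max\{A,M_1(p)\}$; this is immaterial, though, since the max form \eqref{revised-negative-p-eq1} (equivalently, the ODE comparison you also mention) is already part of Theorem \ref{main-thm1}(1) and you cite it directly.
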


\begin{proof}
First,
let $M_1^*=M_1(p)$.  Then, by Lemma \ref{main-lem0},
$$
\int_\Omega u(\tau,x;s,u_0)dx\le \frac{a_{\sup}}{b_{\inf}}|\Omega|\quad \forall \,\, \tau\ge s.
$$
Hence
\begin{equation}
\label{new-new-eq11}
m^*(\tau,s,u_0)=\frac{a_{\sup}}{b_{\inf}}
\quad {\rm and}\quad
\tilde M_1(p,\tau,s,u_0)=0\quad \forall\,\, \tau\ge s.
\end{equation}
This together with \eqref{revised-negative-p-eq1} and   \eqref{revised-negative-p-eq2} implies  \eqref{bdd-p-int} and \eqref{bdd-p-int-0} hold.

Next,
 by H\"older inequality, we have
\begin{align}
\label{bdd-p-int-000}
|\Omega|&=\int_\Omega u^{\frac{p}{p+1}}(t,x;s,u_0) u^{-\frac{p}{p+1}}(t,x;s,u_0)dx\nonumber \\
&\le \big(\int_\Omega  u(t,x;s,u_0)dx\big)^{\frac{p}{p+1}} \big(\int_\Omega u^{-p}(t,x;s,u_0)dx \big)^{\frac{1}{p+1}}
\quad\forall\, t> s.
\end{align}
This together with \eqref{bdd-p-int-0} and Lemma  \ref{prelim-lm-00} implies that  \eqref{bdd-p-int-00} holds with
$\tilde M_1^*= \frac{|\Omega|^{\frac{p+1}{p}}}{(M_1^*)^{\frac{1}{p}}}$.

Now,  assume $\inf_{x\in\Omega}u_0(x)>0$ and $\int_\Omega u_0^{-p}(x)dx\le M_1^*$.
 By  \eqref{proof-eqq-1},
$$
\lim_{\tau \to 0}\int_\Omega u^{-p}(s+\tau,x;s,u_0)dx =\int_\Omega u_0^{-p}(x)dx\le M_1^*.
$$
This together with \eqref{bdd-p-int} implies \eqref{bdd-p-int-1}.
\eqref{bdd-p-int-1-1} then follows from  \eqref{bdd-p-int-1}, \eqref{bdd-p-int-000}, and
 Lemma  \ref{prelim-lm-00}. The lemma is thus proved.
\end{proof}

Next, we prove  Theorem \ref{main-thm2}.

\begin{proof}[Proof of Theorem \ref{main-thm2}]
Let $p$ and $M_1(p)$ be as in Theorem \ref{main-thm1}(1), and $q$ and $M_2(p,q)$ be as in Theorem \ref{main-thm1}(2).
Let $M_0^*=\frac{a_{\sup}}{b_{\inf}}|\Omega|$, $M_1^*=M_1(p)$, and $M_2^*=M_2(p,q)$.
 In the following, we prove that (1) and (2)  in Theorem \ref{main-thm2}  hold with these $p,q,M_1^*, M_2^*$ and some $M_3^*(\tau,\theta)$, $M_4^*(\theta)$.

\medskip

(1) We first prove that the set $\mathcal{E}$ is invariant.

First of all, for any $u_0\in \mathcal{E}$,
\begin{equation*}
|\Omega|^2=\left(\int_{\Omega} u_0^{-p/2}(x) u_0^{p/2}(x)dx\right)^2\le \int_\Omega u_0^{-p}(x) dx\cdot\int_\Omega u_0^p(x)dx\le M_1^*\int_\Omega u_0^p(x)dx.
\end{equation*}
This implies that $\int_\Omega u_0^p(x)dx>0$ and then $\int_\Omega u_0(x)dx>0$.
{Recall that $p=1$ and
$$M_1^*=M_1(p)=\frac{b_{\sup}|\Omega|}{a_{\inf}-a_{\chi,\mu}}\le  \frac{b_{\sup}|\Omega|\max\{1,\frac{1}{\chi}\}}{a_{\inf}-a_{\chi,\mu}},
$$
Hence $u_0$ satisfies $(1.2)'$ with $\tau_0=0$.}

For any $s\in\mathbb{R}$ and $u_0\in\mathcal{E}$, by Lemma \ref{main-lem0},
\begin{equation}
\label{new-new-eq12}
\int_\Omega u(t,x;s,u_0)dx\le \max\left\{\int_\Omega u_0(x)dx,M_0^*\right\}=M_0^*\quad \forall\, t\ge s.
\end{equation}
{ By Lemma \ref{prelim-lm-2}, there holds
\begin{equation}
\label{new-eqqq6}
\int_\Omega u{^{-p}}(t,x;s,u_0)dx\le M_1^*\quad \forall\, t>s.
\end{equation}
 By \eqref{revised-negative-p-eq1},}
\eqref{tilde-M-2} { with $\tau_0=0$}, \eqref{new-new-eq11}, and \eqref{new-eqqq6},
\begin{align*}
{  \int_\Omega u^q(t,x;s,u_0)dx}
&{ \le  \max\left\{\int_\Omega u^q(\tau,x;s,u_0)dx, M_2^*\right\}\quad \forall\, t>\tau>s.}
\end{align*}
Letting $\tau\to s^+$, we have
\begin{equation}
\label{new-eqqq7}
\int_\Omega u^q(t,x;s,u_0)dx\le M_2^*\quad \forall\, t\ge s.
\end{equation}
It then  follows from \eqref{new-new-eq12},  \eqref{new-eqqq6} and \eqref{new-eqqq7} that the set $\mathcal{E}$ is invariant.

Next, we  prove \eqref{new-set-eq1} holds. To this end,  for any given $0<\theta<1-\frac{2N}{q}$, choose $\beta\in (0,\frac{1}{2})$ such that
$$
2\beta-\frac{2N}{q}>\theta.
$$
By Lemma \ref{prelim-lm-5}, there is $K_1$ such that for any $u_0\in\mathcal{E}$ and $s\in\RR$,
\begin{equation}
\label{new-thm2-eq1}
\|u(t,\cdot;s,u_0)\|_{C^\theta(\bar\Omega)}\le K_1 \|u(t,\cdot;s,u_0)\|_{X^\beta_{q/2}}\quad \forall\, t>s.
\end{equation}
Note that
 for any $u_0 \in \mathcal{E}$ and $s\in\RR$,
\begin{align}
\label{new-thm2-eq2}
    \|u(t,\cdot;s,u_0)\|_{X^{\beta}_{q/2}} & \leq   \|A^{\beta}e^{-A(t-s)} u_0\|_{L^{q/2}(\Omega)}\nonumber\\
   &\quad +\chi \int_{s}^{t}\left\| A^{\beta}e^{-A(t-\lambda)}\nabla\cdot \left(\frac{u(\lambda,\cdot;s,u_0)}{v(\lambda,\cdot;s,u_0)}  \nabla v(\lambda,\cdot;s,u_0) \right)\right\|_{L^{q/2}(\Omega)}d\lambda\nonumber \\
    & \quad + \int_{s}^{t}\| A^{\beta}e^{-A(t-\lambda)} \big((1+a(\lambda,\cdot))u(\lambda,\cdot;s,u_0)- b(\lambda) u^2(\lambda, \cdot;s,u_0)\big)\|_{L^{q/2}(\Omega)}d\lambda,
\end{align}
where   $A=A_{q/2}$. By Lemma \ref{prelim-lm-5} again, there is $K_2$ such that
\begin{equation}
\label{new-thm2-eq3}
 \|A^{\beta}e^{-A(t-s)} u_0\|_{L^{q/2}(\Omega)}\le K_2 (t-s)^{-\beta(t-s)} e^{-\gamma(t-s)}\|u_0\|_{L^{q/2}}\quad \forall\, t>s.
\end{equation}
By Lemma \ref{prelim-lm-4},  there is $K_3$ such that
\begin{align}
\label{new-thm2-eq4}
&\chi \int_{s}^{t}\left\| A^{\beta}e^{-A(t-\lambda)}\nabla\cdot \left(\frac{u(\lambda,\cdot;s,u_0)}{v(\lambda,\cdot;s,u_0)}  \nabla v(\lambda,\cdot;s,u_0) \right)\right\|_{L^{q/2}(\Omega)}d\lambda\nonumber \\
    & \leq  K_3 \chi  \int_s^{t}  (t-\lambda)^{-\beta-\frac{1}{2}-\epsilon}e^{-\gamma(t-\lambda)}  \left\| \frac{u(\lambda,\cdot;s,u_0)}{v(\lambda,\cdot;s,u_0)} \cdot \nabla v(\lambda,\cdot;s,u_0) \right\|_{L^{q/2}(\Omega)} d\lambda\nonumber \\
&\le \frac{K_3\chi}{\tilde M_1^*}  \int_s^{t}  (t-\lambda)^{-\beta-\frac{1}{2}-\epsilon}e^{-\gamma(t-\lambda)}  \left\| {u(\lambda,\cdot;s,u_0)} \cdot \nabla v(\lambda,\cdot;s,u_0) \right\|_{L^{q/2}(\Omega)} d\lambda\quad  \text{(by Lemma \ref{prelim-lm-2})}\nonumber \\
&\le \frac{K_3\chi}{\tilde M_1^*}  \int_s^{t}  (t-\lambda)^{-\beta-\frac{1}{2}-\epsilon}e^{-\gamma(t-\lambda)}  \left\| {u(\lambda,\cdot;s,u_0)}\right\|_{L^q(\Omega)}\left\| \nabla v(\lambda,\cdot;s,u_0) \right\|_{L^{q}(\Omega)} d\lambda\quad  \text{(by H\"older inequality)}\nonumber \\
&\le   \frac{CK_3\chi (M_2^*)^{2/q} }{\tilde M_1^*}  \int_s^{t}  (t-\lambda)^{-\beta-\frac{1}{2}-\epsilon}e^{-\gamma(t-\lambda)}  d\lambda\quad  \text{(by Lemma \ref{prelim-lm-000} and Theorem \ref{main-thm1})}.
\end{align}
By Theorem \ref{main-thm1}, there is $K_4>0$ such that
\begin{align}
\label{new-thm2-eq5}
& \int_{s}^{t}\| A^{\beta}e^{-A(t-\lambda)} \big((1+a(\lambda,\cdot))u(\lambda,\cdot;s,u_0)- b(\lambda) u^2(\lambda, \cdot;s,u_0)\big)\|_{L^{q/2}(\Omega)}d\lambda\nonumber \\
     & \le   \int_s^{t} (t-\lambda)^{-\beta}e^{-\gamma(t-\lambda)}   \big(|\Omega|^{2/q}+a_{\max} \|u(\lambda,\cdot;s u_0)\|_{L^{q/2}}+b_{\max}\|u^2(\lambda,\cdot;s, u_0)\|_{L^{q/2}}\big)  d\lambda\nonumber \\
    &\leq K_4 \int_{s}^{t} (t-\lambda)^{-\beta} e^{-\gamma(t-\lambda)} d\lambda.
\end{align}
By \eqref{new-thm2-eq1}-\eqref{new-thm2-eq5}, for any $\tau>0$ and ${ 0<\theta<1-\frac{2N}{q}}$, there is
$M_3^*(\theta,\tau)>0$ such that  \eqref{new-set-eq1} holds. (1) is thus proved.

\smallskip

(2) First of all, it is not difficult to see that \eqref{new-set-eq2} follows directly  from Lemma \ref{main-lem0} and Theorem \ref{main-thm1}. It then suffices to prove \eqref{new-set-eq3}.

For any $u_0$ satisfying ${ (1.2)'}$, by \eqref{new-set-eq2}, there is $T_0(u_0)>0$ such that for any $s\in\RR$,
$$
\int_\Omega u^{-p}(t,x;s,u_0)dx\le 2M_1^*\quad {\rm and}\quad \int_\Omega u^q(t,x;s,u_0)dx\le 2M_2^*\quad \forall\,
t\ge s+T_0(u_0).
$$
Observe that
\begin{equation}
\label{new-thm2-eq6-1}
(u(t,x;s,u_0),v(t,x;s,u_0))=(u(t,x;\tilde s,\tilde u_0),
v(t,x;\tilde s,\tilde u_0))\quad \forall\, t\ge \tilde s,
\end{equation}
where $\tilde s=s+T_0(u_0)$ and $\tilde u_0(x)=u(s+T_0(u_0),x;s,u_0)$. Hence
$$
\int_\Omega u^{-p}(t,x;\tilde s,\tilde u_0)dx\le 2M_1^*\quad {\rm and}\quad \int_\Omega u^q(t,x;\tilde s,\tilde u_0)dx\le 2M_2^*\quad \forall\,
t\ge \tilde s.
$$
By the arguments in the proof of \eqref{new-set-eq1},   there is $M_4^*(\theta)$ such that
$$
\|u(t,\cdot;\tilde s,\tilde u_0)\|_{C^\theta(\bar\Omega)}\le M_4^*(\theta)\quad \forall\, t\ge \tilde s+1.
$$
This together with \eqref{new-thm2-eq6-1} implies that \eqref{new-set-eq3} holds with $T(u_0)=T_0(u_0)+1$.
\end{proof}

We now
 prove Theorem \ref{main-thm3}. In the rest of this section, to indicate the dependence of $(u(t,x;s,u_0),v(t,x;s,u_0))$
on $a(t,x), b(t,x)$, we put
$$
(u(t,x;s,u_0,a,b),v(t,x;s,u_0,a,b))=(u(t,x;s,u_0),v(t,x;s,u_0)).
$$

\begin{proof}
[Proof of Theorem \ref{main-thm3}]
We prove this theorem by contradiction.  Assume that there is no $m^*>0$ such that \eqref{new-set-eq6} holds.
Then for any $n\in\NN$,  there is $u_n$ satisfying ${(1.2)'}$ such that
\begin{equation}
\label{new-set-eq7}
\liminf_{t-s\to\infty} \inf_{x\in\Omega}u(t,x;s,u_n,a,b)\le m_n:=\frac{1}{n}.
\end{equation}

Let $p,q, \theta, M_1^*,M_2^*$, and  $M_4^*(\theta)$ be as in Theorem \ref{main-thm2}.  By Theorem \ref{main-thm2} and
Lemma \ref{prelim-lm-2}, there is $T_n>0$ such that
\begin{equation}
\label{new-set-eq8}
\|u(t,\cdot;s,u_n,a,b)\|_{C^\theta(\bar\Omega)}\le M_4^*(\theta)\quad \forall \, t-s\ge T_n
\end{equation}
and
\begin{equation}
\label{new-set-eq9}
\int_\Omega u(t,x;s,u_n,a,b)dx\ge \frac{\tilde M_1^*}{2}\quad \forall\, t-s\ge T_n.
\end{equation}
By \eqref{new-set-eq7}, there are $t_n,s_n\in\RR$ with $t_n-s_n\ge T_n+1$ and $x_n\in\bar\Omega$ such that
\begin{equation}
\label{new-set-eq10}
u(t_n,x_n;s_n,u_n,a,b)\le \frac{2}{n}.
\end{equation}

Let
$$
a_n(t,x)=a(t+t_n-1,x),\quad b_n(t,x)=b(t+t_n-1,x).
$$
Observe that
\begin{align}
\label{new-set-eq11}
u(t_n,x;s,u_n,a,b)&=u(t_n,x;t_n-1,u(t_n-1,\cdot;s_n,u_n,a,b),a,b)\nonumber\\
&=u(1,x;0,u(t_n-1,\cdot;s_n,u_n,a,b),a_n,b_n).
\end{align}
By \eqref{new-set-eq8} and {\bf (H)}, without loss of generality, we may assume that there are $u_0^*,u_1^*\in C(\bar\Omega)$ and $a^*(t,x),b^*(t,x)$ satisfying {\bf (H)} such that
$$
\lim_{n\to\infty} u(t_n-1,x;s_n,u_n,a,b)=u_0^*(x),
\quad \lim_{n\to\infty} u(t_n,x;s_n,u_n,a,b)=u_1^*(x)
$$
uniformly in $x\in\bar\Omega$, and
$$
\lim_{n\to\infty} a_n(t,x)=a^*(t,x),\quad \lim_{n\to\infty} b_n(t,x)=b^*(t,x)
$$
uniformly in $x\in\bar\Omega$ and locally uniformly in $t\in\RR$. It then follows from Lemma \ref{prelim-lm-011} and \eqref{new-set-eq11} that
\begin{equation}
\label{new-set-eq12}
u_1^*(x)=u(1,x;0,u_0^*,a^*,b^*).
\end{equation}

By \eqref{new-set-eq8}, \eqref{new-set-eq9}, and the Dominated Convergence Theorem,
$$
\int_\Omega u_0^*(x)=\lim_{n\to\infty}\int_\Omega u(t_n-1,x;s_n,u_n,a,b)dx\ge \frac{\tilde M_1^*}{2}.
$$
Then by \eqref{new-set-eq12} and {the strong maximum principle} for parabolic equations,
$$
\inf_{x\in\bar\Omega} u_1^*(x)>0.
$$
This together with \eqref{new-set-eq10}  implies that
$$
\frac{2}{n}\ge \inf_{x\in \Omega} u(t_n,x;s_n,u_n,a,b)\ge \frac{1}{2}\inf_{x\in\Omega} u_1^*(x)\quad \forall\, n\gg 1,
$$
which is a contradiction.
Therefore, there is $m^*>0$ such that for any $u_0$ satisfying \eqref{initial-cond-eq},
$$
\liminf_{t-s\to\infty}\inf_{x\in\Omega}u(t,x;s,u_0,a,b)\ge m^*.
$$
The theorem is thus proved.
\end{proof}

\section{Existence of  positive entire solutions}

In this section, we study the existence of positive entire solutions and prove
Theorem \ref{main-thm4}.

We first prove a lemma.

\begin{lemma}
\label{proof-bdd-lm1}  Let  $p>0$,  $q>2N$,  and $M_1^*,M_2^*>0$ be as in Theorem \ref{main-thm2}.
 The set
\begin{equation*}
\mathcal{E}=\left\{u\in C^0(\bar \Omega)\,|\, u\ge 0,\,\, \int_\Omega u(x)\le M_0^*,\,\, \int_\Omega u^{-p}(x)dx\le M_1^*,\,\, \int_\Omega u^{q}(x)\le M_2^*\right\}
\end{equation*}
is a convex, closed subset of $C^0(\bar\Omega)$.
\end{lemma}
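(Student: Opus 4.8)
The plan is to verify the two defining properties of $\mathcal{E}$ separately, each reducing to elementary convexity and integration facts; the only point that requires genuine care is the negative-power constraint $\int_\Omega u^{-p}\le M_1^*$.

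\emph{Convexity.} Let $u,w\in\mathcal{E}$ and $\lambda\in(0,1)$ (the cases $\lambda\in\{0,1\}$ being trivial), and set $z=\lambda u+(1-\lambda)w$. Then $z\in C^0(\bar\Omega)$ and $z\ge 0$, and by linearity $\int_\Omega z=\lambda\int_\Omega u+(1-\lambda)\int_\Omega w\le M_0^*$. For the $q$-th power bound I would use that $s\mapsto s^q$ is convex on $[0,\infty)$ (since $q>2N\ge 1$), so that $z^q\le\lambda u^q+(1-\lambda)w^q$ pointwise, and integrate to get $\int_\Omega z^q\le M_2^*$. For the $(-p)$-th power bound, first note that the finiteness of $\int_\Omega u^{-p}$ forces $u>0$ a.e.\ on $\Omega$ (a continuous function vanishing on a set of positive measure would make the integral infinite), and likewise $w>0$ a.e.; hence $z>0$ a.e., and since $s\mapsto s^{-p}$ is convex on $(0,\infty)$ we obtain $z^{-p}\le\lambda u^{-p}+(1-\lambda)w^{-p}$ a.e., so $\int_\Omega z^{-p}\le M_1^*$. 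Thus $z\in\mathcal{E}$.

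\emph{Closedness.} Let $u_n\in\mathcal{E}$ with $u_n\to u$ in $C^0(\bar\Omega)$; I must show $u\in\mathcal{E}$. Uniform convergence gives $u\ge 0$ at once, and since $\Omega$ is bounded it gives $\int_\Omega u=\lim_n\int_\Omega u_n\le M_0^*$. Because $\{u_n\}$ is uniformly bounded, say $0\le u_n\le C$, and $s\mapsto s^q$ is uniformly continuous on $[0,C]$, we have $u_n^q\to u^q$ uniformly, hence $\int_\Omega u^q=\lim_n\int_\Omega u_n^q\le M_2^*$. For the negative-power bound the map $s\mapsto s^{-p}$ is neither bounded nor continuous at $0$, so instead I would use that $u_n^{-p}\to u^{-p}$ pointwise on $\bar\Omega$ in $[0,+\infty]$ (with the convention $0^{-p}=+\infty$; this holds because $u_n(x)\to u(x)$ everywhere) together with Fatou's lemma to conclude $\int_\Omega u^{-p}\le\liminf_n\int_\Omega u_n^{-p}\le M_1^*$. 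Hence $u\in\mathcal{E}$, and $\mathcal{E}$ is closed.

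\emph{Main obstacle.} There is no serious obstacle: the statement is essentially a bookkeeping exercise. The only subtlety concerns the term $\int_\Omega u^{-p}$. For convexity one must observe that the constraint itself guarantees positivity a.e., legitimizing the pointwise convexity inequality for $s\mapsto s^{-p}$; for closedness one must replace the failing uniform/dominated-convergence argument by Fatou's lemma, exploiting the lower semicontinuity of $s\mapsto s^{-p}$ on $[0,\infty]$.
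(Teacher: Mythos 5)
Your proof is correct and takes essentially the same route as the paper: convexity of $\mathcal{E}$ via the pointwise convexity of $s\mapsto s^r$ for $r\in\{1,-p,q\}$ (with positivity a.e.\ forced by the $\int_\Omega u^{-p}$ constraint), and closedness via Fatou's lemma. The only minor difference is in passing the bound $\int_\Omega u^{-p}\le M_1^*$ to the limit: you apply Fatou directly on $\Omega$ with the convention $0^{-p}=+\infty$, whereas the paper first shows the zero set of the limit has measure zero via the estimate $|\{u\le\varepsilon\}|\le (2\varepsilon)^p M_1^*$ and then applies Fatou off that set -- your version is a slight streamlining of the same argument and is equally valid.
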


\begin{proof}
First, we prove the set $\mathcal{E} $ is convex. {Note that $g(u)=u^r$   is a convex function on $(0,\infty)$, where $r=1$, $-p$ or $q$.   Hence, for  any $ \lambda \in [0,1]$ and $ u_*,u^* >0$, we have
\begin{equation}
\label{eq-convx-2}
    g(\lambda u_* + (1-\lambda) u^*) \leq \lambda g(u_*) + (1-\lambda) g(u^*).
\end{equation}
For any $u_1, u_2 \in \mathcal{E},$  we have $\int_{\Omega} u_1^{-p}(x)dx\leq M_1^*$, $ \int_{\Omega} u_2{^{-p}}(x)dx \leq M_1^*$, and $u_1(x)>0, u_2(x)>0$ for a.e. $x\in\Omega$.    By \eqref{eq-convx-2}, we have that
$$
g(\lambda u_1(x)+(1-\lambda)u_2(x))\le \lambda g( u_1(x))+(1-\lambda)g(u_2(x))\quad {\rm for}\quad a.\, e. \,\,\, x\in\Omega.
$$
This implies that
\begin{align*}
    \int_{\Omega}  g( \lambda u_1(x) + (1-\lambda) u_2(x))  dx  &\leq\lambda \int_{\Omega} g(u_1(x))dx + (1-\lambda) \int_{\Omega} g(u_2(x))dx.
\end{align*}
This implies that  for any $\lambda\in [0,1]$ and $u_1,u_2\in\mathcal{E}$,
 $\lambda u_1+ (1-\lambda) u_2 \in \mathcal{E}.$ Thus,  the set $\mathcal{E} $ is convex.}

Next, we prove the set $\mathcal{E} $ is closed. Suppose that $u_n \in \mathcal{E}$, $u\in C^0(\bar\Omega)$, and
  $u_n \rightarrow u$ in $C^{0}(\bar \Omega)$. Then $u(x)\ge 0$ for any $x\in\bar\Omega$, and  for any $\varepsilon>0$,  there is  $N_\varepsilon$ such that for any $n \ge N_\varepsilon$, there holds
 $$|u_n(x)-u(x)| < \varepsilon\quad \forall\, x \in \bar \Omega.
$$
 This implies that
 \begin{equation}
\label{new-enq-1}
0\le u_n(x)=u_n(x)-u(x)+u(x) \leq \varepsilon + u(x)\quad \forall\, n\ge N_\varepsilon,\,\, x\in\bar\Omega.
\end{equation}

For any $\varepsilon>0$, let
$$
\Omega_\varepsilon=\{x\in\Omega\,|\, u(x)\le \varepsilon\}.
$$
Let
$$
\Omega_0=\{x\in\Omega\,|\, u(x)=0\}.
$$
By \eqref{new-enq-1},
$$
0\le u_n(x)\le 2\varepsilon\quad \forall\, x\in\Omega_\varepsilon.
$$
Since $\int_\Omega u_n^{-p}(x)dx\le M_1^*$, we have that
\begin{equation*}
    |\Omega_{\epsilon}|=\int_{\Omega} u_n^{-p} \cdot u_n^{p}\le \int_\Omega u_n^{-p} \cdot (2\epsilon)^p\le (2\epsilon)^p \int_\Omega u_n^{-p} \le (2\epsilon)^p M_1^*.
\end{equation*}
This implies that
$$
|\Omega_0|=0.
$$
{Let $g(u)$ be as in the above.
Then by Fatou's lemma, we have
\begin{align*}
  \int_\Omega g(u(x))dx=  \int_{\Omega\setminus\Omega_0} g(u(x))dx= \int_{\Omega\setminus \Omega_0} \lim_{n \rightarrow \infty} g( u_n(x))dx \leq \lim_{n \rightarrow \infty} \inf \int_{\Omega\setminus\Omega_0} g(u_n(x))dx.
\end{align*}
This implies that
$$
\int_\Omega u(x)dx\le M_0^*,\,\, \int_\Omega u^{-p}(x)dx\le M_1^*,\,\, \int_\Omega u^q(x)dx\le {M_2^*}.
$$
Hence $u\in\mathcal{E}$  and the set $\mathcal{E} $ is closed.}
\end{proof}

We now prove Theorem \ref{main-thm4}.

\begin{proof} [Proof of Theorem \ref{main-thm4}]

(1)
First of all, fix $u_0\in \mathcal{E},$ where $\mathcal{E}$ as in \eqref{set-E-eq}.  By Theorem \ref{main-thm2},  $u(t,\cdot;s,u_0)\in\mathcal{E}$ for any $t>s$.

Next, for any $n\in \NN$,  define $u_n(x)=u(0,x,;-n,u_0)$. Fix $\theta\in (0,\frac{2N}{q})$. By Theorem \ref{main-thm2} again, there is
$\tilde M_3^*>0$ such that
\begin{equation}
\label{main-thm3-eq1}
      \|u(t,\cdot;-n,u_n)\|_{C^{\theta}} =\sup_{x \in \Omega}{|u(t,x;-n,u_0)|} + \sup{\frac{|u(t,x;-n,u_0)-u(t,y;-n,u_0)|}{|x-y|^{\theta}}} \le \tilde M_3^*
\end{equation}
for all $t\ge -n+1$ and $n\ge 1$.
This implies that the sequence  $\{u_n(\cdot)\}$ is uniformly bounded and equicontinuous on $\bar \Omega$. By the Arzel\'a-Ascoli Theorem and Lemma \ref{proof-bdd-lm1}, there are $n_k$, $u^*_0 \in \mathcal{E}$ such that $u_{n_k} \rightarrow u^*_0$ in $C^0(\bar\Omega)$ as $n_k \to \infty.$

Note that \vspace{-0.1in}
\begin{equation*}
    u(\cdot,t;-n_k,u_{0})=u(\cdot,t;0,u(\cdot,0;-n_k,u_0))=u(\cdot,t;0,u_{n_k})\quad \forall \, t>0.
\end{equation*}
By Lemma  \ref{prelim-lm-011},  for any $t>0$,
\begin{equation*}
    \lim_{n_k \to \infty}  u(\cdot,t;-n_k,u_{0})=\lim_{n_k \to \infty} u(\cdot,t;0,u_{n_k})= u(\cdot,t;0,u^*_0) \quad {\rm in}\quad  C^0(\bar\Omega).
\end{equation*}
Since $v(\cdot,t;0,u^*_0)=A^{-1}u(\cdot,t;0,u^*_0),$ we also have that for any $t>0$,
\begin{equation*}
    \lim_{n_k \to \infty}  v(\cdot,t;-n_k,u_{0})=\lim_{n_k \to \infty}  A^{-1} u(\cdot,t;-n_k,u_{0}) = v(\cdot,t;0,u^*_0) \quad {\rm in}\quad C^0(\bar\Omega).
\end{equation*}

We now prove that $u(\cdot,t;0,u^*_0)$ has a backward extension on $(-\infty,0)$. By \eqref{main-thm3-eq1} and the Arzel\'a-Ascoli Theorem, without loss of generality, we may assume that for any $m\in\NN$, there is $u_m^*(\cdot)\in\mathcal{E}$ such that
$u(\cdot,-m;-n_k,u_0)\to  u_m^*(\cdot)$
in $C^0(\bar \Omega)$.
By Lemma  \ref{prelim-lm-011} again,
\begin{equation*}
    \lim_{n_k \to \infty} u(\cdot,t;-n_k,u_0)=\lim_{n_k \to \infty} u(\cdot,t;-m,u(\cdot,-m;-n_k,u_0)) = u(\cdot,t;-m, u_m^*)
\end{equation*}
and for $t>-m$. Note that $u_0^*=u(\cdot,0;-m,u_m^*)$. This implies that $u^*(x,t;0,u_0^*)$ has a backward extension up to $t=-m$. Letting $m\to \infty$ yields that $u^*(x,t)$ has a backward extension on $(-\infty,0)$.

Finally, let us denote $u^*(t,x)=u^*(t,x;0,u_0^*),$ hence $v^*(t,x)=A^{-1} u^*(t,x).$ Then $(u^*(t,x),v^*(t,x))$ is an entire solution of  \eqref{main-eq} and $u^*(t,\cdot)\in\mathcal{E}$ for any $t\in\RR$.  (1) is thus proved.

\medskip

(2)
Assume that there exists $T>0$ such that $a(t+T,x)=a(t,x)$ and $b(t+T,x)=b(t,x)$. First,  define the map $\mathcal{T} (T):\mathcal{E} \rightarrow C^0(\bar\Omega)$ as follows:
\begin{equation*}
    \mathcal{T} (T)u_0=u( T,\cdot; 0,u_0)\quad \forall \, u_0\in\mathcal{E}.
\end{equation*}
By Theorem \ref{main-thm2}, $\mathcal{T}(T)$ maps $\mathcal{E}$ into $\mathcal{E}$. By Lemma \ref{prelim-lm-01} and Theorem \ref{main-thm2},
$\mathcal{T}(T):\mathcal{E}\to\mathcal{E}$ is continuous and relatively compact in $C^0(\bar\Omega)$.
By Lemma  \ref{proof-bdd-lm1},  $\mathcal{E}$ is a convex closed subset of $C^0(\bar\Omega)$.
Then,  by Schauder fixed point theorem (see Lemma \ref{sch-fxd-point}), one can find $u^T \in \mathcal{E} $ such that $\mathcal{T} (T)u^T=u^T,$ that is, $u(\cdot,T;s,u^T)=u^T(\cdot).$
Thus,
\begin{equation*}
    u(t+T,\cdot;0,u^T)=u(t,\cdot;T,u(T,\cdot;0,u^T))=u(t,\cdot;0,u^T).
\end{equation*}
Hence, $u(t,\cdot;0,u^T)$ is periodic with period $T$. Moreover, the second equation of (1.3) and the uniqueness of solutions of
\begin{equation*}
\begin{cases}
-\Delta v +v = u(t,\cdot;0,u^T), & x\in \Omega \cr
\frac{\p v}{\p n}=0,\quad x\in\p\Omega,
\end{cases}
\end{equation*}
it is obtained that $v(t,\cdot;0,u^T)=(I-\Delta)^{-1}u(t,\cdot;0,u^T)$ is periodic with period $T,$ which concludes that $(u(t,\cdot;0,u^T),v(t,\cdot;0,u^T))$ is a positive periodic solution of (1.3) with $u(t,\cdot;0,u^T)\in\mathcal{E}$ for any $t\in\RR$.. (2) is thus proved.

\medskip

(3) Assume that $a(t,x)=a(x)$ and $b(t,x)=b(x).$ Observe that every $T>0$ is a period for $a(t,x)$ and $b(t,x).$
By  (2),
 for fixed $T>0,$ there exists $u^{T} \in \mathcal{E}$ such that $(u(t,\cdot;0,u^T),(I-\Delta)^{-1}u(t,\cdot;0,u^T))$ is a positive  periodic solution of \eqref{main-eq} with period $T$.

Let $T_n=\frac{1}{n}$ and $u_n(x)=u^{T_n}(x)$. Then $u_n(x)=u(kT_n,x;0,u_n)$ for all $k\in\NN$.
By Theorem \ref{main-thm2}, $\{u_n(\cdot)\}$ is a uniformly bounded and equi-continuous sequence in $C^0(\bar\Omega)$.
Then by the  Arzel\'a-Ascoli theorem,
there exist a subsequence $\{u_{n_k}\}$ and  $u^* \in \mathcal{E}$ such that $u_{n_k} \to u^*$ in $C^0(\bar\Omega)$ as $n_k \to \infty.$

We claim that
\begin{equation}
\label{entire-eq7}
u(\cdot,t;0,u^*)=u^*(\cdot)\quad \text{for all}\,\, t\ge 0.
\end{equation}
For any $t>0$ and $\epsilon>0$, let $\tau_{n_k}\in [0,T_{n_k})$ be such that
$$
\frac{t-\tau_{n_k}}{T_{n_k}}\in \ZZ^+.
$$
Then
$$
\tau_{n_k}\to 0\quad {\rm and}\quad k\to\infty.
$$
By Lemma \ref{prelim-lm-01},
$$
 \|u(\tau_{n_k},\cdot;0,u^*)-u^*(\cdot)\|_\infty<\epsilon\quad \forall\, k\gg 1.
$$
By Lemma \ref{prelim-lm-011},
$$
\|u(\tau,\cdot;0,u^*)-u(\tau,\cdot;0,u_{n_k})\|_\infty< \epsilon\quad  \forall\,  \tau\in [0,t] \quad {\rm and}\quad \forall\,  k\gg 1.
$$
It then follows that
\vspace{-0,1in}\begin{align*}
|u(t,x;0,u^*)-u^*(x)|&\le |u(t,x;0,u^*)-u(t,x;0,u_{n_k})| +|u(t,x;0,u_{n_k})-u^*(x)|\quad (\text{choose}\,\, k\gg 1)\\
&= |u(t,x;0,u^*)-u(t,x;0,u_{n_k})| +|u(\tau_{n_k},x;0,u_{n_k})-u^*(x)|\\
& \le |u(t,x;0,u^*)-u(t,x;0,u_{n_k})| +|u(\tau_{n_k},x;0,u_{n_k})- u(\tau_{n_k},x;0,u^*)|\\
&\quad +|u(\tau_{n_k},x;0,u^*)-u^*(x)|\\
&\le 3\epsilon\quad \forall\, x\in\bar\Omega.
\end{align*}
Letting $\epsilon\to 0$ entails \eqref{entire-eq7}. This shows that the solution pair $(u(\cdot,t;0,u^*),v(\cdot,t;0,u^*))$ with $v(\cdot,t;0,u^*)=(I-\Delta)^{-1}u(\cdot,t;0,u^*)$ is a steady state solution of \eqref{main-eq} with $u^*(\cdot)\in\mathcal{E}$. (3) is thus proved.
\end{proof}


\begin{thebibliography}{6}

\bibitem{Aid} M. Aida,  K.  Osaka,  T. Tsujikawa, A.  Yagi,  and M.  Mimura,  Chemotaxis and growth system with sigular sensitivity
function, {\it  Nonliear Anal. Real World Appl.} {\bf  6} (2005), 323-336.

\bibitem{Ama} H.  Amann,  Existence and regularity for semilinear parabolic evolution equations,
{\it  Ann. Scuola Norm. Sup. Pisa Cl. Sci.}  (4) {\bf 11} (1984), no. 4, 593-676.



\bibitem{Bil} P. Biler,  Global solutions to some parabolic-elliptic systems of chemotaxis,
 {\it  Advanced Mathematics and Applications}, {\bf 9} (1999), 347-359.




\bibitem{Bla} T. Black, Global generalized solutions to a parabolic-elliptic Keller-Segel system with singular sensitivity,
  {\it Discrete Contin. Dyn. Syst. Ser. S} {\bf 13} (2020) 119-137.


\bibitem{BlFuLa}  T. Black, M. Fuest, and J. Lankeit,
Relaxed parameter conditions for chemotactic collapse in logistic-type parabolic-elliptic Keller-Segel systems,
{\it Z. Angew. Math. Phys.}  {\bf 72} (2021), no. 3, Paper No. 96, 23 pp.

\bibitem{CaWaYu} J.  Cao,  W.  Wang, and H.  Yu, Asymptotic behavior of solutions to two-dimensional chemotaxis system with logistic source and singular sensitivity,
{\it  J. Math. Anal. Appl.} {\bf  436}  (2016), no. 1, 382-392.



 \bibitem{Fue1} M. Fuest, Finite-time blow-up in a two-dimensional Keller-Segel system with an environmental dependent logistic source,
       {\it  Nonlinear Anal. Real World Appl.}, {\bf  52} (2020), 103022, 14 pp.


   \bibitem{Fue2} M. Fuest, Approaching optimality in blow-up results for Keller-Segel systems with logistic-type dampening,
   {\it NoDEA Nonlinear Differential Equations Appl. } {\bf 28}  (2021), no. 2, Paper No. 16, 17 pp.

 \bibitem{Fuj} K. Fujie, Boundedness in a fully parabolic chemotaxis system with singular sensitivity,
 {\it  J.
Math. Anal. Appl.}, {\bf 424} (2015), 675-684.

\bibitem{FuSe0}  K. Fujie and  T. Senba, Global existence and boundedness of radial solutions to a two dimensional fully parabolic
chemotaxis system with general sensitivity,  {\it  Nonlinearity} {\bf 29} (2016),  2417-2450.

\bibitem{FuSe1} K. Fujie and T. Senba,
 Global existence and boundedness in a parabolic-elliptic Keller-Segel system with general
sensitivity, {\it  Discrete Contin. Dyn. Syst. Ser. B},  {\bf 21(1)} (2016), 81-102.








\bibitem{FuWiYo1} K. Fujie, M.  Winkler, and T.  Yokota,  Blow-up prevention by logistic sources in a parabolic-elliptic Keller-Segel system with singular sensitivity,
    {\it  Nonlinear Anal.} {\bf 109} (2014), 56-71.







\bibitem{FuWiYo} K. Fujie, M. Winkler, and T. Yokota. Boundedness of solutions to parabolic-elliptic Keller-Segel systems with signal dependent sensitivity,
    {\it  Math. Methods Appl. Sci.}, {\bf 38(6)} (2015), 1212-1224.






\bibitem{gitr} D. Gilbarg, N. Trudinger, Elliptic Partial Differential Equations of Second Order. {\it Springer}, Berlin, (1998) ISBN 3-540-41160-7.




\bibitem{hale} J. K. Hale, Ordinary Differential Equations, Wiley Interscience, New York (1969). Second Edition, Robert E. Krieger Publ. Co., Malabar, Florida.


\bibitem{Hen}
D. Henry,  Geometric Theory of Semilinear Parabolic Equations, Springer, Berlin,  Heidelberg, New York,
1977.

\bibitem{HeMeVe} M.A. Herrero, E. Medina, and J.J.L. Velzquez, Singularity patterns in a chemotaxis model,
{\it Math. Ann.}, {\bf 306} (1996), 583-623.


\bibitem{HeVe} M.A. Herrero and J.J.L. Velzquez, Finite-time aggregation into a single point in a reactiondiffusion system,
{\it Nonlinearity}, {\bf 10} (1997), 1739-1754.




\bibitem{HoWi} D. Horstmann and M. Winkler, Boundedness vs.blow-up in a chemotaxis system, {\it J. Differential Equations}, {\bf 215} (2005), 52-107.

\bibitem{IsSh1} T.B. Issa and W.  Shen,  Persistence, coexistence and extinction in two species chemotaxis models on bounded heterogeneous environments,
{\it  J. Dynam. Differential Equations} 31 (2019), no. 4, 1839-1871.

\bibitem{IsSh2} T.B.  Issa and W.  Shen,
 Pointwise persistence in full chemotaxis models with logistic source on bounded heterogeneous environments,
{\it  J. Math. Anal. Appl.} {\bf  490} (2020), no. 1, 124204, 30 pp.


\bibitem{Keller-0} E.F. Keller and  L.A. Segel, Initiation of slime mold aggregation viewed as an instability,
{\it J. Theoret. Biol.} {\bf  26}  (1970) 399-415.

\bibitem{Keller-00} E.F. Keller and L.A. Segel,  Model for chemotaxis, {\it J. Theoret. Biol.},  {\bf 30} (1971), 225-234.

\bibitem{Keller-1} E.F. Keller and L.A. Segel, Traveling bans of chemotactic bacteria: a theoretical analysis. {\it J. Theor. Biol. } {\bf 30} (1971), 377–380.





\bibitem{HKWS} {H. I. Kurt and W. Shen, Finite-time blow-up prevention by logistic source in chemotaxis models with   singular sensitivity in any dimensional setting, https://arxiv.org/abs/2008.01887.}



\bibitem{Lan} J. Lankeit,
A new approach toward boundedness in a two-dimensional parabolic chemotaxis system with singular sensitivity,
{\it
Math. Methods Appl. Sci.}, {\bf  39} (2016), no. 3, 394-404.

\bibitem{LaWi} J. Lankeit and M. Winkler,
 A generalized solution concept for the Keller-Segel system with logarithmic sensitivity: global solvability for large nonradial data,
  {\it Nonlinear Differential Equations Appl.} {\bf 24} (2017), no. 4, Art. 49, 33 pp.

\bibitem{Nag2} T. Nagai, Blowup of nonradial solutions to parabolic-elliptic systems modeling chemotaxis
in two-dimensional domains,
{\it J. Inequal. Appl.}, {\bf 6} (2001), 37-55.





\bibitem{NaSe3} T. Nagai and T. Senba, Global existence and blow-up of radial solutions to a parabolic-elliptic
system of chemotaxis,
{\it Adv. Math. Sci. Appl.}, {\bf 8} (1998), 145-156.






\bibitem{NaSe} T. Nagai and T. Senba, Global existence and blow-up of radial solutions to a parabolic-elliptic system of chemotaxis,
 {\it   Advanced Mathematics and
Applications}, {\bf 8} (1998), 145-156.




{\bibitem{TaYo} Y. Tanaka and T. Yokota, Blow‐up in a parabolic–elliptic Keller–Segel system with density‐dependent sublinear sensitivity and logistic source, {\it Math Meth Appl Sci.},  {\bf 43} (2020),  7372– 7396.}

\bibitem{TaWi} Y. Tao and M.  Winkler,
Persistence of mass in a chemotaxis system with logistic source,
{\it J. Differential Equations}, {\bf  259} (2015), no. 11, 6142-6161.


\bibitem{TeWi1}
J.I. Tello and W.  Winkler M, A chemotaxis system with logistic source, {\it Common Partial Diff. Eq.},
{\bf 32} (2007), 849-877.

 \bibitem{Vig-new1} {  G. Viglialoro,  Boundedness properties of very weak solutions to a fully parabolic chemotaxis-system with logistic source,
{\it  Nonlinear Anal. Real World Appl.} {\bf  34} (2017), 520-535. }

\bibitem{Vig-new2} { G.  Viglialoro and T. E.  Woolley,  Eventual smoothness and asymptotic behaviour of solutions to a chemotaxis system perturbed by a logistic growth,
{\it  Discrete Contin. Dyn. Syst. Ser. B} {\bf 23}  (2018), no. 8, 3023-3045.}

\bibitem{Win2} M. Winkler, Blow-up in a higher-dimensional chemotaxis system despite logistic growth
restriction, {\it Journal of Mathematical Analysis and Applications}, {\bf 384} (2011), 261-272.


\bibitem{Win4} M. Winkler, Global solutions in a fully parabolic chemotaxis system with singular sensitivity,
{\it Math. Methods Appl. Sci.} {\bf 34} (2011), 176-190.


{\bibitem{Win5} Winkler, M. Finite-time blow-up in low-dimensional Keller–Segel systems with logistic-type superlinear degradation, {\it  Z. Angew. Math. Phys.},  {\bf 69}, 40 (2018), no. 2, Paper No. 69, 40 pp.}


\bibitem{Win-new} { M. Winkler,   Chemotaxis with logistic source: very weak global solutions and their boundedness properties, {\it J. Math. Anal. Appl.}  {\bf 348}  (2008), no. 2, 708-729.}



\bibitem{ZhZh} X.  Zhao and S. Zheng, Global boundedness to a chemotaxis system with singular sensitivity and logistic source,
{\it Z. Angew. Math. Phys.} {\bf 68} (2017), no. 1, Art. 2, 13 pp.



\bibitem{ZhZh1} X. Zhao and S.  Zheng, Global existence and boundedness of solutions to a chemotaxis system with singular sensitivity and logistic-type source, {\it  J. Differential Equations} {\bf  267} (2019), no. 2, 826-865.



\bibitem{ZhMuWiHu} P.  Zheng,  C.  Mu,  R.  Willie,  and Z.  Hu,
 Global asymptotic stability of steady states in a chemotaxis-growth system with singular sensitivity,
{\it Comput. Math. Appl.} {\bf  75} (2018), no. 5, 1667-1675.

\end{thebibliography}
\end{document}